\theoremstyle{plain}
\newtheorem{theorem}{Theorem}[section]
\newtheorem{lemma}[theorem]{Lemma}
\theoremstyle{definition}
\newtheorem{definition}[theorem]{Definition}
\newtheorem{remark}[theorem]{Remark}
\newcommand{\G}{\mathbb G}
\long\def\symbolfootnote[#1]#2{\begingroup
\def\thefootnote{\fnsymbol{footnote}}\footnote[#1]{#2}\endgroup}
\numberwithin{equation}{section}
\begin{document}
\title[Best constants and ground states on stratified Lie groups] { Best constants in subelliptic fractional  Sobolev and Gagliardo-Nirenberg inequalities and ground states on stratified Lie groups}
\author{Sekhar Ghosh, Vishvesh Kumar and Michael Ruzhansky}
\address[ Sekhar Ghosh]{Department of Mathematics, National Institute of Calicut, Kozhikode, Kerala, India - 673601}
\email{sekharghosh1234@gmail.com / sekharghosh@nitc.ac.in}
\address[Vishvesh Kumar]{Department of Mathematics: Analysis, Logic and Discrete Mathematics, Ghent University, Ghent, Belgium}
\email{vishveshmishra@gmail.com / vishvesh.kumar@ugent.be}

\address[Michael Ruzhansky]{Department of Mathematics: Analysis, Logic and Discrete Mathematics, Ghent University, Ghent, Belgium\newline and \newline
School of Mathematical Sciences, Queen Mary University of London, United Kingdom}
\email{michael.ruzhansky@ugent.be}
\thanks{{\em 2020 Mathematics Subject Classification: } 35R03, 35H20, 35P30, 22E30, 35R11}

\keywords{Stratified Lie group; Fractional $p$-sublaplacian; subelliptic fractional Sobolev Inequality subelliptic fractional Gagliardo-Nirenberg Inequality; Best constant; Least energy solutions, Logarithmic subelliptic fractional Sobolev inequalities.}

\maketitle

\begin{abstract} 
In this paper, we establish  the sharp fractional subelliptic Sobolev inequalities and  Gagliardo-Nirenberg inequalities on stratified Lie groups. The best constants are given in terms of a ground state solution of a fractional subelliptic equation involving the fractional $p$-sublaplacian ($1<p<\infty$) on stratified Lie groups. We also prove the existence of ground state (least energy) solutions to nonlinear subelliptic fractional  Schr\"odinger equation  on stratified Lie groups.  Different from the proofs of analogous results in the setting of classical Sobolev spaces on Euclidean spaces given by Weinstein (Comm. Math. Phys. 87(4):576-676 (1982/1983)) using the rearrangement inequality which is not available in stratified Lie groups, we apply a subelliptic version of vanishing lemma due to Lions extended in the setting of stratified Lie groups combining it with the compact embedding theorem for subelliptic fractional Sobolev spaces obtained in our previous paper (Math. Ann. (2023)).  We also present subelliptic fractional logarithmic Sobolev inequalities with explicit constants on stratified Lie groups. The main results are new for $p=2$ even in the context of the Heisenberg group. 
\end{abstract}

\tableofcontents

\section{Introduction and main results}
The Sobolev and  Gagliardo-Nirenberg inequalities play an important role in the study of partial differential equations (PDEs) (see \cite{BB98}).
The classical Gagliardo-Nirenberg inequality in the Euclidean space $\mathbb{R}^N$ was established by Gagliardo \cite{Gag59} and Nirenberg \cite{Nir59} in their celebrated papers independently. They proved that for every $u \in W^{1,2}( \mathbb{R}^N)$, there exists a positive constant $C$ such that the following inequality holds.
\begin{align}\label{r1.1}
	\int_{ \mathbb{R}^N}|u|^{q} dx\leq C\left(\int_{ \mathbb{R}^N}|\nabla u|^{2} d x\right)^{\frac{N(q-2)}{4}}\left(\int_{ \mathbb{R}^N}|u|^{2} d x\right)^{\frac{2 q-N(q-2)}{4}},
\end{align}
    where $2<q<2^*.$ Here, $2^*=\frac{2N}{N-2}$ for $N\geq 3$ and $2^* = +\infty$ for $N = 2$. The smallest $C$ such that the inequality \eqref{r1.1} holds is called the best constant and we denote it by $C_{GN, \mathbb{R}^N}$. Finding the best constants in the Gagliardo-Nirenberg  inequalities  has an analytical or geometrical significance and  proved to be vital in studying the well-posedness of Cauchy problems and stability theorems for nonlinear parabolic PDEs.    The pioneering work establishing a relationship between the best constant and least energy solutions is due to Weinstein \cite{Wei82}. For $N=1,$ the best constant in the Gagliardo-Nirenberg inequality was calculated by Nagy \cite{Nagy41}.  Weinstein \cite{Wei82} provided a form of $C_{GN, \mathbb{R}^N}$ given by the least energy solution of the following stationary Schr\"odinger equation.
\begin{equation}\label{r1.2}
		-\Delta u+u=|u|^{p-2}u, \quad u \in W^{1,2}(\mathbb{R}^N).
\end{equation}
 It is noteworthy to mention that the expression for the best constant does not depend on the least energy solution. Following these studies, there have been many contributions to the literature on the Euclidean case which either extended and improved these inequalities or  deal with the best constants and extremisers for such inequalities.  For instance, we mention \cite{A76,B1,B2,B3,delD02,DELL14,DT16,BG05,DHHL,B11,FW19,HMOW11,HYZ12,MP18,NPV12,T76,Esf15,DLL22,MS02,zhang} and the references therein without making an attempt to present an exhaustive list.

 In this paper, we are interested in the analysis of the best constants appearing in the fractional subelliptic Gagliardo-Nirenberg inequalities on stratified Lie groups, a subclass of nilpotent Lie groups. These groups naturally appear in analysis, representation theory, and geometry. One prominent example of such a group is the Heisenberg group.  It was noted in the celebrated paper \cite{RS76} by Rothschild and Stein that nilpotent Lie groups play an important role in deriving sharp subelliptic estimates for differential operators on manifolds. In 1970, Stein \cite{Stein71} delivered a visionary program at the ICM in Nice for studying analysis and PDEs on stratified Lie groups  (see also \cite{Cart28}).  He demonstrated it in his seminal joint paper with Rothschild on the Rothschild-Stein lifting theorem that says that a general H\"ormander's sums of squares of vector fields on manifolds can be approximated by the  sublaplacian on some stratified Lie groups (see also, \cite{F77} and \cite{Roth83}).  We also refer to Gromov \cite{Gro} or Danielli,
Garofalo and Nhieu \cite{DGN07} for general exposition from different points of view and \cite{C98, CGS04} for  applications in mathematical models of crystal material and human vision.

The functional inequalities on Lie groups, in particular, on stratified Lie groups have been extensively studied during the last few decades. We refer to papers \cite{F75,IV11,CCR15,RTY20,RT16,L19,CDG93,KD20,CR13II,GL92,AM18,KRS20,GLV,GKR23,KRS23}, monographs \cite{FS82,RS19} and references therein for an excursion into the world of subelliptic functional inequalities and their applications.  In this paper, we are concerned with the fractional subelliptic Sobolev and Gagliardo-Nirenberg inequalities on stratified Lie groups and their best constants. In fact, we will show, as in the seminal paper of Weinstein \cite{Wei82}, that the best constant in the  subelliptic fractional Gagliardo-Nirenberg inequality can be explicitly expressed as the least energy solution of the nonlocal subelliptic stationary Schr\"odinger equation on the stratified Lie group $\G$:

\begin{equation}\label{r1.71}
	\left(-\Delta_{p, \mathbb{G}}\right)^s u+|u|^{p-2} u = |u|^{q-2} u, \quad u \in W_{0}^{s,p}(\mathbb{G}),
\end{equation}

where $s \in (0, 1),$ $p \in (1, \infty)$ and $p<q<p_s^*:=\frac{Qp}{Q-ps}$ with $Q$  being the homogeneous dimension of $\G$ associated with the group dilations $(D_\lambda)_{\lambda>0}$. Here the operator $\left(-\Delta_{p, \mathbb{G}}\right)^s$ on $\G$ is the nonlinear nonlocal counterpart of the sublaplacian on a stratified Lie group $\G.$ For $p=2,$ the operator turns out to be the fractional sublaplacian $(-\Delta_\G)^s,$ which was an object of deep investigation and its connection with different areas of mathematics in many intriguing papers \cite{BF13,RT16,FMMT15,GLV,K20,RT20} and references cited therein. This is our guiding operator defined, initially for $u \in C_c^\infty(\G),$ by 
\begin{equation}\label{fracl}
(-\Delta_\G)^s(u)(x):= \lim_{\epsilon \rightarrow 0} \int_{\G \backslash B(x, \epsilon)} \frac{|u(x)-u(y)|}{|y^{-1}x|^{Q+2s}} dy = C_{Q, s}\,\, P.V. \int_{\G} \frac{|u(x)-u(y)|}{|y^{-1}x|^{Q+2s}} dy.
\end{equation}
It is known that for a $H$-type group the operator $(-\Delta_\G)^s$ is a multiple of the pseudo-differential operator defined as 
\begin{equation}
\mathcal{L}_s:=2^s (-\Delta_z)^{\frac{s}{2}} \frac{\Gamma (-\frac{1}{2}\Delta_\G (-\Delta_z)^{\frac{1}{2}}+\frac{1+s}{2}) }{\Gamma (-\frac{1}{2}\Delta_\G (-\Delta_z)^{\frac{1}{2}}+\frac{1-s}{2})},
\end{equation}
where $-\Delta_z$ is the positive Laplacian in the center of the $H$-type group $\G$ and $\Delta_\G$ is the sublaplacian on $\G.$ For more details, we refer to \cite{BF13,RT16,RT20}. It is worth noting that $\mathcal{L}_s$ is a ``conformal invariant" operator  and is very important in CR geometry (see \cite{FMMT15}). Finally, we would like to point out that the operator $(-\Delta_\G)^s$ does not coincide with the standard fractional power $-\Delta_{\G}^s$ of the sublaplacian $-\Delta_{\G}$ in the Heisenberg group or in general, $H$-type groups for any value of $s \in (0, 1),$ which is defined as 

$$(- \Delta_{\G}^su)(x):=-\frac{s}{\Gamma (1-s)}\int_0^\infty \frac{1}{t^{1+s}}(H_tu(x)-u(x))\,dt, $$
where $H_t:=e^{-t \Delta_\G}$ is the heat semigroup constructed by Folland \cite{F75}. 

On the other hand, many relevant results have been achieved during the last few years concerning the nonlinear subelliptic equations on stratified Lie groups, we cite \cite{FF15,FBR17,GLV,BFG,Gas20,Gas21,GKR23,MPPP23,L15,L19,PP22,PT21,CLZ23} and reference therein just to mention a few of them. Motivated by the representation \eqref{fracl} of the nonlocal operator $(-\Delta_{\G})^s,$ in this paper we will work with a more general nonlocal nonlinear operator involving the $p$-growth of the norm, known as, the fractional $p$-sublaplacian on $\G$ and defined, for $s\in(0,1)$ and $p\in[1,\infty)$,  by 
\begin{equation}
	\left(-\Delta_{p,{\mathbb{G}}}\right)^s u(x):=C_{Q,s, p}\,\,  P.V. \int_{{\mathbb{G}} } \frac{|u(x)-u(y)|^{p-2}(u(x)-u(y))}{\left|y^{-1} x\right|^{Q+p s}} dy, \quad x \in {\mathbb{G}},
\end{equation}
for $u \in C_c^\infty(\G).$ 
To state our first result regarding the existence of the least energy solution of \eqref{r1.71} involving the fractional $p$-sublaplacian on $\G$, we first recall some basic definitions.
Let $\Omega \subset {\mathbb{G}}$ be an open subset. Then for $0<s<1\leq  p<\infty$, the fractional Sobolev space $W^{s,p}(\Omega)$ on stratified groups is defined as 

\begin{equation}
	W^{s,p}(\Omega)=\{u\in L^{p}(\Omega): [u]_{s, p,\Omega}<\infty\},
\end{equation}
endowed with the norm 
\begin{equation}
	\|u\|_{W^{s,p}(\Omega)}^p:=\|u\|_{L^p(\Omega)}^p+[u]_{s,p,\Omega}^p,
\end{equation}
where $[u]_{s, p,\Omega}$ denotes the Gagliardo semi-norm defined by
\begin{equation}
	[u]_{s, p,\Omega}:=\left(\int_{\Omega} \int_{\Omega} \frac{|u(x)-u(y)|^{p}}{\left|y^{-1} x\right|^{Q+ps}} dxdy\right)^{\frac{1}{p}}<\infty.
\end{equation}

Observe that for all $\phi \in C_c^{\infty}(\Omega)$, we have $[u]_{s, p,\Omega}<\infty$. We define the space  $W_0^{s,p}(\Omega)$ as the closure of $C_c^{\infty}(\Omega)$ with respect to the  norm $\|u\|_{W^{s,p}(\Omega)}$. We would like to point out that $W_0^{s,p}(\mathbb{G})=W^{s,p}(\mathbb{G})$ (see \cite{GKR23}). 

\noindent  With the definitions of $I$ and $\mathcal{N}$ as in Section \ref{sec3}, the following theorem is our first main result.
\begin{theorem}\label{rt1.2i}
	Let $\G$ be a stratified Lie group with homogeneous dimension $Q.$ Let $0<s<1< p<\infty$ and $p<q<p_s^*:=\frac{Qp}{Q-ps}$. Then the problem \eqref{r1.71} has a least energy solution $\phi \in W_{0}^{s,p}( \mathbb{G})$. Moreover, we have the least energy $d=I(\phi):=\inf_{u\in\mathcal{N}}I(u)$. 
\end{theorem}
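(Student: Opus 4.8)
The plan is to obtain $\phi$ as a minimizer of the energy functional $I$ on the Nehari manifold $\mathcal{N}$, and to restore the compactness lost to the left-translation invariance of $\mathbb{G}$ by combining the subelliptic vanishing lemma of Lions with the compact embedding theorem. First I would record the structural facts. Since $q>p$, for each $u\in W_0^{s,p}(\mathbb{G})\setminus\{0\}$ the fibering map $t\mapsto I(tu)$, $t>0$, attains a strict global maximum at a unique $t(u)>0$, so $t(u)u\in\mathcal{N}$; in particular $\mathcal{N}\neq\emptyset$ and $d=\inf_{\mathcal{N}}I$ coincides with the mountain-pass level of $I$. On $\mathcal{N}$ the Nehari constraint $\langle I'(u),u\rangle=0$ reduces the energy to $I(u)=\left(\tfrac1p-\tfrac1q\right)\|u\|_{L^q(\mathbb{G})}^q$, which there is also comparable to $\|u\|^p_{W^{s,p}(\mathbb{G})}$. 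Feeding this constraint into the continuous embedding $W_0^{s,p}(\mathbb{G})\hookrightarrow L^q(\mathbb{G})$ (valid for $p<q<p_s^*$) forces $\|u\|_{W^{s,p}(\mathbb{G})}\ge\rho_0>0$ uniformly on $\mathcal{N}$, so that $d>0$.

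Next I would produce a Palais--Smale sequence at the level $d$. Applying Ekeland's variational principle to the minimization of $I$ over the natural constraint $\mathcal{N}$ (equivalently invoking the mountain pass theorem, whose level is $d$) yields $\{u_n\}\subset\mathcal{N}$ with $I(u_n)\to d$ and $I'(u_n)\to0$ in the dual of $W_0^{s,p}(\mathbb{G})$. The identity $I(u_n)=\left(\tfrac1p-\tfrac1q\right)\|u_n\|^p_{W^{s,p}(\mathbb{G})}\to d$ gives boundedness of $\{u_n\}$ at once, and by reflexivity a subsequence satisfies $u_n\rightharpoonup u$ weakly in $W_0^{s,p}(\mathbb{G})$.

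The crux, and the step I expect to be the main obstacle, is to guarantee that the weak limit is nontrivial, precisely because the embedding into $L^q(\mathbb{G})$ over the whole group is not compact. Here I would first exclude vanishing: if $\sup_{\xi\in\mathbb{G}}\int_{B(\xi,R)}|u_n|^p\to0$ for some (hence every) $R>0$, then the subelliptic vanishing lemma yields $u_n\to0$ in $L^q(\mathbb{G})$, and combined with $\langle I'(u_n),u_n\rangle\to0$ this would force $I(u_n)\to0$, contradicting $d>0$. Hence there exist $\xi_n\in\mathbb{G}$ and $\beta>0$ with $\int_{B(\xi_n,R)}|u_n|^p\ge\beta$. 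Using the left-invariance of \eqref{r1.71} I replace $u_n$ by its left translate by $\xi_n$, which remains a Palais--Smale sequence at level $d$, and pass to a new weak limit $u$. The compact embedding theorem for subelliptic fractional Sobolev spaces on bounded sets then gives $u_n\to u$ strongly in $L^p(B(0,R))$, so $\int_{B(0,R)}|u|^p\ge\beta$ and therefore $u\neq0$.

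Finally I would identify $u$ with the ground state. The same local compactness (available since $q<p_s^*$) provides $u_n\to u$ in $L^q_{\mathrm{loc}}(\mathbb{G})$ and, along a further subsequence, almost everywhere; this lets me pass to the limit in $\langle I'(u_n),\varphi\rangle$ for every $\varphi\in C_c^\infty(\mathbb{G})$. The linear $L^p$-term converges by weak convergence, the subcritical term $\int|u_n|^{q-2}u_n\varphi$ by the local and a.e.\ convergence, and the quasilinear nonlocal term converges because the rescaled difference quotients $|u_n(x)-u_n(y)|^{p-2}(u_n(x)-u_n(y))|y^{-1}x|^{-(Q+ps)/p'}$ are bounded in $L^{p'}(\mathbb{G}\times\mathbb{G})$ and converge almost everywhere, hence weakly, so that pairing against the $L^p$ datum built from $\varphi$ is legitimate in the limit. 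Thus $I'(u)=0$, i.e.\ $u$ solves \eqref{r1.71} weakly and $u\in\mathcal{N}$, whence $I(u)\ge d$. For the reverse inequality I only need weak lower semicontinuity of $\|\cdot\|_{L^q(\mathbb{G})}$: since $I=\left(\tfrac1p-\tfrac1q\right)\|\cdot\|_{L^q(\mathbb{G})}^q$ on $\mathcal{N}$, one gets $I(u)\le\left(\tfrac1p-\tfrac1q\right)\liminf_n\|u_n\|_{L^q(\mathbb{G})}^q=d$. Therefore $I(u)=d$ and $\phi:=u$ is the desired least energy solution.
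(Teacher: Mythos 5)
Your proposal is correct, and its skeleton coincides with the paper's: the Nehari manifold $\mathcal{N}$ with $d>0$, exclusion of vanishing via the subelliptic Lions lemma (Lemma \ref{vanishinglemma}), left translation to recentre the concentrating mass, and the compact embedding of Theorem \ref{l-3} to obtain a nontrivial weak limit. The genuine divergence is in the endgame. The paper never uses $I'(u_n)\to 0$: working only with a minimizing sequence on $\mathcal{N}$, it shows $\mathcal{L}(\phi)=0$ for the weak limit by a two-case contradiction argument (Fatou, the Brezis--Lieb lemma, and the fibering map of Lemma \ref{rl2.1}), then upgrades weak to strong $W_0^{s,p}(\G)$-convergence by a second Brezis--Lieb dichotomy, and only at the end converts the constrained minimizer into a critical point via the Lagrange-multiplier argument of Lemma \ref{rl2.4}. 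You instead insist on a genuine Palais--Smale sequence, which costs you the standard natural-constraint upgrade of Ekeland's principle --- legitimate here because $\langle\mathcal{L}'(u),u\rangle=(p-q)\|u\|^q_{L^q(\G)}$ is bounded away from zero on $\mathcal{N}$ by Lemma \ref{rl2.2} --- but in exchange you pass to the limit directly in the weak formulation (boundedness in $L^{p'}(\G\times\G)$ together with a.e.\ convergence of the $(p-1)$-homogeneous difference quotients), obtaining $I'(u)=0$ at once, and then close with weak lower semicontinuity of the $L^q$-norm combined with the identity $I=\left(\frac1p-\frac1q\right)\|\cdot\|^q_{L^q(\G)}$ on $\mathcal{N}$. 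The trade-off: the paper's route takes weaker input (no PS condition) and yields strong convergence of the translated sequence as a by-product, at the price of the dichotomy analysis; your route needs the stronger PS input and the quasilinear limit-passage argument for the nonlocal term, but entirely avoids Brezis--Lieb, the case distinctions on the sign of $\mathcal{L}(\phi)$, and the multiplier lemma, with the level identification $I(u)=d$ reduced to a one-line weak-lsc estimate. Both are complete proofs of Theorem \ref{rt1.2i}.
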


 This result in the classical case, that is, for  \eqref{r1.2} was proved by Weinstein \cite{Wei82} and using this he obtained the sharp estimates of the best constant $C_{GN, \mathbb{R}^N}$ in the Gagliardo-Nirenberg inequality \eqref{r1.1} by solving a minimisation problem. The key ingredients of his proof were symmetric rearrangement and the compact embedding of the radial Sobolev space $W_r^{1,2}(\mathbb{R}^N)$ in $L^p(\mathbb{R}^N)$ for $N \geq 2$ and $2<q<2^*.$ It is now well-known that symmetric rearrangement techniques are not available for the stratified Lie groups and therefore one needs to seek a different proof. Chen and Rocha \cite{CR13II} extended the results of Weinstein \cite{Wei82} in the Heisenberg group by using the classical compact embedding of the Folland-Stein-Sobolev space (\cite{GL92}) and Lions type vanishing lemma (\cite{Li85I,Li85II}) for respective Folland-Stein-Sobolev space on the Heisenberg group. Based on this existence result, they (\cite{CR13II}) obtained the expression for the best constant of the following Gagliardo-Nirenberg inequality on the Heisenberg group: \begin{equation}\label{GN_CR}
\int_{\mathbb{H}^{N}}|u|^{q}dx\leq C\left(\int_{\mathbb{H}^{N}}|\nabla_{H} u|^{2}dx\right)^{\frac{Q(q-2)}{4}}\left(\int_{\mathbb{H}^{N}}|u|^{2}dx\right)^{\frac{2q-Q(q-2)}{4}}.
\end{equation}
Here, $Q=2N+2$ refers to the homogeneous dimension of the Heisenberg group $\mathbb{H}^{N}$, $\nabla_{H}$ is horizontal gradient, $q\in(2, 2^*)$, where $2^*=\frac{2N}{N-2}$.
The best constant in \eqref{GN_CR}, denoted by $C_{GN, \mathbb{H}^N},$ is expressed in terms of the least energy (or ground state) solution of the subelliptic partial differential equation. \begin{equation}\label{r-prob}
-\triangle_{H}u+u=|u|^{q-2}u, \;\;u\in W^{1,2}(\mathbb{H}^{N}),
\end{equation}
where $\triangle_{H}$ is the sublaplacian on $\mathbb{H}^{N}$, and $W^{1,2}(\mathbb{H}^{N})$ is the Sobolev space on $\mathbb{H}^{N}$ with the norm
$$\|u\|_{W^{1,2}(\mathbb{H}^{N})}:=\left(\int_{\mathbb{H}^{N}}(|\nabla_{H}u|^{2}+|u|^{2})dx\right)^{1/2}.$$

Recently, the third author and his collaborators \cite{RTY20} characterised the best constant of the following Gagliardo-Nirenberg inequality over a graded Lie group $\mathbb{G}$:

		\begin{equation}\label{GN-RTY21}
			\int_{\mathbb{G}}|u(x)|^{q}dx\leq C \left(\int_{\mathbb{G}}|\mathcal{R}_{1}^{\frac{a_{1}}{\nu_{1}}}u(x)|^{p}dx\right)^{\frac{Q(q-p)-a_{2}pq}{(a_{1}-a_{2})p^{2}}}
			\left(\int_{\mathbb{G}}|\mathcal{R}_{2}^{\frac{a_{2}}{\nu_{2}}}u(x)|^{p}dx\right)^{\frac{a_{1}pq-Q(q-p)}{(a_{1}-a_{2})p^{2}}}
		\end{equation}
		for all $u\in {W}^{a_1,p}(\mathbb{G})\cap {W}^{a_2,p}(\mathbb{G})$. Here $a_{1}> a_{2}\geq0$, $1<p<\frac{Q}{a_{1}},$ $\frac{pQ}{Q-a_{2}p}\leq q\leq\frac{pQ}{Q-a_{1}p}$ and,  $\mathcal{R}_{1}$ and $\mathcal{R}_{2}$ are positive Rockland operators of homogeneous degrees $\nu_{1}$ and $\nu_{2}$, respectively. They proved that the best constant in \eqref{GN-RTY21} can be identified by the least energy solutions of the following higher order nonlinear hypoelliptic Schr\"{o}dinger equation with the power nonlinearities:
	\begin{equation}\label{RTY21-Schrodinger}
		\mathcal{R}_{1}^{\frac{a_{1}}{\nu_{1}}}(|\mathcal{R}_{1}^{\frac{a_{1}}{\nu_{1}}}u|^{p-2}\mathcal{R}_{1}^{\frac{a_{1}}{\nu_{1}}}u)+
		\mathcal{R}_{2}^{\frac{a_{2}}{\nu_{2}}}(|\mathcal{R}_{2}^{\frac{a_{2}}{\nu_{2}}}u|^{p-2}\mathcal{R}_{2}^{\frac{a_{2}}{\nu_{2}}}u)=|u|^{q-2}u.
	\end{equation} 

In this paper, we continue the aforementioned studies  \cite{CR13II, RTY20} for a more general nonlocal operator, namely, the fractional $p$-sublaplacian on the stratified Lie groups. In \cite{GKR23}, we have proved a compact embedding result for the fractional Sobolev spaces  on the stratified Lie groups (see Theorem \ref{l-3} in the next section), which is a nonlocal version of the embedding result for the Folland-Stein-Sobolev space obtained in \cite{GL92}. This is one of the main tools in the proof of Theorem \ref{rt1.2i}. The second main tool as in the aforementioned works \cite{CR13II, RTY20} is a subelliptic version of the Lions vanishing Lemma (\cite{Li85I,Li85II}) for $W_0^{s, p}(\G),$ which is stated below and will be proved in Section \ref{sec3}. 
\begin{lemma} 
    Let $s \in (0, 1), p \in (1, \infty),$ and $q$ be such that  $p\leq q<p_s^*=\frac{Qp}{Q-ps}.$ Let $(u_k)$ be a bounded sequence in $W^{s,p}_0(\G)$ with the property 
    \begin{equation} 
        \liminf_{k \rightarrow \infty} \sup_{x \in \G} \int_{B(x, 1)} |u_k|^q dy =0.
    \end{equation}
     Then there exists a subsequence, also denoted by $(u_k),$ such that $u_k \rightarrow 0$ in $L^t(\G)$ for $t \in (q, p_s^*).$
\end{lemma}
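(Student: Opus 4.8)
The plan is to localise the vanishing hypothesis by a covering of $\G$ with uniformly finite overlap, and to convert local smallness in $L^q$ into global smallness in $L^t$ by interpolating against the subelliptic fractional Sobolev embedding. First, since the $\liminf$ is zero I pass to a subsequence (not relabelled) along which $\epsilon_k := \sup_{x\in\G}\int_{B(x,1)}|u_k|^q\,dy \to 0$, and set $M:=\sup_k\|u_k\|_{W^{s,p}_0(\G)}<\infty$. Because $\G$ with its homogeneous quasi-norm is a space of homogeneous type (the Haar measure obeys $|B(x,r)|=r^Q|B(x,1)|$), a maximal $1$-separated set $\{x_i\}\subset\G$ yields a countable covering $\G=\bigcup_i B(x_i,1)$ whose balls $B_i:=B(x_i,1)$ have overlap bounded by a constant $N_0=N_0(\G)$. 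Finite overlap gives the two estimates used below,
\begin{equation*}
\sum_i \|u_k\|_{W^{s,p}(B_i)}^p \le N_0\,\|u_k\|_{W^{s,p}(\G)}^p \le N_0 M^p, \qquad \sum_i \int_{B_i}|u_k|^q\,dy \le N_0\,\|u_k\|_{L^q(\G)}^q,
\end{equation*}
the first because each pair $(x,y)$ lies in at most $N_0$ of the products $B_i\times B_i$, and both right-hand sides being finite since $W^{s,p}(\G)\hookrightarrow L^q(\G)$ for $p\le q\le p_s^*$.

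Next, on each ball I combine the local subelliptic fractional Sobolev embedding $W^{s,p}(B_i)\hookrightarrow L^{p_s^*}(B_i)$ with Hölder interpolation between $L^q$ and $L^{p_s^*}$: with $\lambda\in(0,1)$ fixed by $\tfrac1t=\tfrac{1-\lambda}{q}+\tfrac{\lambda}{p_s^*}$, this produces the local Gagliardo--Nirenberg estimate
\begin{equation*}
\int_{B_i}|u_k|^t\,dy \le C\,\Big(\int_{B_i}|u_k|^q\,dy\Big)^{(1-\lambda)t/q}\,\|u_k\|_{W^{s,p}(B_i)}^{\lambda t}.
\end{equation*}
Since $\int_{B_i}|u_k|^q\,dy\le\epsilon_k$, summing over $i$ and invoking the finite-overlap bounds should give $\|u_k\|_{L^t(\G)}^t\le\sum_i\int_{B_i}|u_k|^t\,dy\le C\,\epsilon_k^{\delta}$ for some $\delta>0$, whence $u_k\to0$ in $L^t(\G)$. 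The exponent bookkeeping naturally splits into two cases. If $\lambda t\ge p$ I estimate $\|u_k\|_{W^{s,p}(B_i)}^{\lambda t}\le M^{\lambda t-p}\|u_k\|_{W^{s,p}(B_i)}^{p}$ and factor out $\epsilon_k^{(1-\lambda)t/q}$ directly. If $\lambda t<p$ I apply Hölder's inequality to the sum with the conjugate pair $\big(\tfrac{p}{\lambda t},\tfrac{p}{p-\lambda t}\big)$, so that the Sobolev factor sums to $N_0M^p$ and the residual factor is $\big(\sum_i(\int_{B_i}|u_k|^q\,dy)^{\nu}\big)^{(p-\lambda t)/p}$ with $\nu=\tfrac{(1-\lambda)tp}{q(p-\lambda t)}$.

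The crux of the argument, and the step I expect to require the most care, is twofold. Analytically, one needs the local fractional Sobolev inequality on balls, which is delicate owing to the nonlocal Gagliardo seminorm; it should follow from the global subelliptic fractional Sobolev inequality on $\G$ together with the fact that balls are extension domains, and is precisely where the embedding theory of \cite{GKR23} (cf. Theorem \ref{l-3}) enters. Combinatorially, the gain of smallness hinges on the \emph{strict} inequality $\nu>1$: a direct computation reduces $\nu\ge1$ to $p(t-q)\ge \lambda t\,(p-q)$, and since $q\ge p$ and $t>q$ the right-hand side is $\le0$ while the left-hand side is $>0$, so $\nu>1$ holds strictly. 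This is exactly what licenses the factorisation $\big(\int_{B_i}|u_k|^q\,dy\big)^{\nu}\le \epsilon_k^{\nu-1}\int_{B_i}|u_k|^q\,dy$ and hence the extraction of a positive power of $\epsilon_k$. I note, as a guard against the wrong proof, that a purely measure-theoretic interpolation without the Sobolev seminorm returns the borderline value $\nu=1$ and yields only boundedness; it is therefore essential to use the seminorm, which is summable with the sharp exponent $p$ thanks to finite overlap.
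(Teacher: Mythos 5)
Your proposal is correct and takes essentially the same route as the paper's proof: pass to a subsequence along which $\sup_{x\in\G}\int_{B(x,1)}|u_k|^q\,dy\to 0$, cover $\G$ by unit balls with uniformly bounded overlap, interpolate on each ball between $L^q$ and $L^{p_s^*}$ by H\"older, control the $L^{p_s^*}$ factor by a local fractional Sobolev embedding, and sum over the covering using that the local $W^{s,p}$-norms raised to the power $p$ have bounded-overlap sum dominated by $\|u_k\|^p_{W^{s,p}_0(\G)}$. The differences are only refinements of the same argument: your two-case exponent bookkeeping with the strict inequality $\nu>1$ treats every $t\in(q,p_s^*)$ directly, where the paper arranges the exponent on the local Sobolev norm to be exactly $p$, and the local embedding on balls that you flag as the delicate analytic ingredient is precisely what the paper imports from Theorem \ref{l-3} applied to the balls $B(x_i,1)$.
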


We now turn our attention to deriving the fractional Gagliardo-Nirenberg inequality. We mentioned that the proof is contained in \cite{KRS23} but the proof is quite elementary using H\"older's inequality so we include it here. Recall that the following fractional subelliptic Sobolev inequality 

\begin{equation}\label{sobo-ineq}
    \int_{\mathbb{G}}|u(x)|^{p_s^*} dx\leq C\left[\int_{\mathbb{G}}\int_{\mathbb{G}}\frac{|u(x)-u(y)|^{p}}{\left|y^{-1} x\right|^{Q+ps}} dxdy\right]^{\frac{p_s^*}{p}}
\end{equation}
for $u \in W^{s,p}_0(\G)$ was proved in \cite[Theorem 2]{KD20}. 
Now, we derive, from the H\"older inequality, that

\begin{align*}
	\int_{ \mathbb{G}}|u|^q dx=\int_{ \mathbb{G}}|u|^{ql}|u|^{q(1-l)} dx \leq \left(\int_{\mathbb{G}}|u|^{p_s^*} dx\right)^{\frac{ql}{p_s^*}}\left(\int_{ \mathbb{G}}|u|^{p} dx\right)^{\frac{q(1-l)}{p}},
\end{align*}
where $\frac{ql}{p_s^*}+\frac{q(1-l)}{p}=1$. Thus, $l=\frac{(q-p)Q}{spq}$. Therefore, eliminating $l$, we obtain

\begin{align*}
	\int_{ \mathbb{G}}|u|^q dx\leq \left(\int_{\mathbb{G}}|u|^{p_s^*} dx\right)^{\frac{(Q-ps)(q-p)}{sp^2}}\left(\int_{ \mathbb{G}}|u|^{p} dx\right)^{\frac{spq-Q(q-p)}{sp^2}}.
\end{align*}
Using the inequality \eqref{sobo-ineq}, we get the following subelliptic fractional Gagliardo-Nirenberg inequality:

\begin{align}\label{r1.6}
	\int_{ \mathbb{G}}|u(x)|^q dx&  \leq C\left(\int_{\mathbb{G}}\int_{\mathbb{G}}\frac{|u(x)-u(y)|^{p}}{\left|y^{-1} x\right|^{Q+ps}} dxdy\right)^{\frac{Q(q-p)}{sp^2}}\left(\int_{ \mathbb{G}}|u|^{p} dx\right)^{\frac{spq-Q(q-p)}{sp^2}}.
\end{align}

The best constant for the inequality \eqref{r1.6} is the smallest positive constant $C_{GN,\mathbb{G}}>0$ such that the inequality \eqref{r1.6} is true. Thus, we can define $C_{GN,\mathbb{G}}$ as 

\begin{equation}\label{GN-intro}
    C_{GN,\mathbb{G}}^{-1}:=\inf_{u\in W_0^{s,p}(\mathbb{G})\setminus\{0\}}\frac{\left(\int_{\mathbb{G}}\int_{\mathbb{G}}\frac{|u(x)-u(y)|^{p}}{\left|y^{-1} x\right|^{Q+ps}} dxdy\right)^{\frac{Q(q-p)}{sp^2}}\left(\int_{\mathbb{G}}|u(x)|^{p}dx\right)^{\frac{spq-Q(q-p)}{sp^2}}}
		{\int_{\mathbb{G}}|u(x)|^{q}dx}.
\end{equation}
Now, we state the following theorem which says that the best constants of the fractional Gagliardo-Nirenberg inequality can be characterised by the least energy solutions of the nonlinear subelliptic fractional Schr\"odinger equation \eqref{r1.71}.

\begin{theorem}\label{rt1.3}
		Let $0<s<1<p<\infty$ and $Q>ps$ and $p<q<p_s^*:=\frac{Qp}{Q-ps}$, where $Q$ is the homogeneous dimension of a stratified Lie group $\mathbb{G}$. Let $\phi$ be a least energy solution of \eqref{r1.71}. Then the smallest positive constant $C_{GN,\mathbb{G}}$ of the Gagliardo-Nirenberg inequality \eqref{GN-intro} can be characterized by
	
	\begin{align}\label{r1.10}
			C_{GN,\mathbb{G}}^{-1} &=\frac{pqs-Q(q-p)}{pqs}\left(\frac{Q(q-p)}{pqs-Q(q-p)}\right)^{\frac{Q(q-p)}{sp^2}}\|\phi\|_{L^{p}(\mathbb{G})}^{q-p}\nonumber\\
			&=\frac{pqs-Q(q-p)}{pqs}\left(\frac{Q(q-p)}{pqs-Q(q-p)}\right)^{\frac{Q(q-p)}{sp^2}}\left(\frac{pqs-Q(q-p)}{(q-p)s} d\right)^{\frac{q-p}{p}}
	\end{align}
	where $d=I(\phi):=\inf_{u\in\mathcal{N}}I(u)$, $\phi$ being a least energy solution of \eqref{r1.71}. 
\end{theorem}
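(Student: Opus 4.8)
The plan is to identify $C_{GN,\mathbb{G}}^{-1}$ with the value of the Weinstein-type quotient
\[
J(u):=\frac{\left([u]_{s,p,\mathbb{G}}^{p}\right)^{\theta_1}\left(\|u\|_{L^p(\mathbb{G})}^{p}\right)^{\theta_2}}{\|u\|_{L^q(\mathbb{G})}^{q}},\qquad \theta_1:=\frac{Q(q-p)}{sp^2},\ \ \theta_2:=\frac{spq-Q(q-p)}{sp^2},
\]
evaluated at a least energy solution $\phi$, and then to compute that value explicitly; note $\theta_1+\theta_2=q/p$. By \eqref{GN-intro}, $C_{GN,\mathbb{G}}^{-1}=\inf_{u\neq0}J(u)$. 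A short change of variables using the $\lambda^{Q}$-homogeneity of the Haar measure and the identity $|\delta_\lambda y^{-1}\,\delta_\lambda x|=\lambda|y^{-1}x|$ shows that $J(\mu u)=J(u)$ for every $\mu>0$ and, crucially, that $J(u\circ\delta_\lambda)=J(u)$ for every $\lambda>0$. This dilation invariance is the structural feature replacing the symmetric rearrangement used by Weinstein in \cite{Wei82}.

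First I would record the two scalar identities satisfied by $\phi$. Writing $A:=[\phi]_{s,p,\mathbb{G}}^{p}$, $B:=\|\phi\|_{L^p(\mathbb{G})}^{p}$ and $M:=\|\phi\|_{L^q(\mathbb{G})}^{q}$, testing \eqref{r1.71} against $\phi$ gives the Nehari identity $A+B=M$, whence on $\mathcal{N}$ one has $I(u)=\left(\tfrac1p-\tfrac1q\right)\|u\|_{L^q(\mathbb{G})}^q$ and in particular
\[
d=I(\phi)=\frac{q-p}{pq}\,M.
\]
The second identity is of Pohozaev type and is where the group dilations enter: since $\phi$ is a critical point of $I$, the scalar function $\lambda\mapsto I(\phi\circ\delta_\lambda)$ is stationary at $\lambda=1$; using $[\phi\circ\delta_\lambda]_{s,p,\mathbb{G}}^p=\lambda^{ps-Q}A$ and $\|\phi\circ\delta_\lambda\|_{L^r(\mathbb{G})}^{r}=\lambda^{-Q}\|\phi\|_{L^r(\mathbb{G})}^r$ and differentiating yields
\[
\frac{ps-Q}{p}\,A-\frac{Q}{p}\,B+\frac{Q}{q}\,M=0.
\]
Combining this with $A+B=M$ and solving the linear system gives the crucial ratio
\[
\frac{A}{B}=\frac{Q(q-p)}{spq-Q(q-p)}=\frac{\theta_1}{\theta_2},\qquad M=\frac{\theta_1+\theta_2}{\theta_2}\,B=\frac{q}{p\theta_2}\,B,
\]
and hence $B=\|\phi\|_{L^p(\mathbb{G})}^{p}=\dfrac{pqs-Q(q-p)}{(q-p)s}\,d$.

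With these relations in hand, I would establish $C_{GN,\mathbb{G}}^{-1}=J(\phi)$. The Euler--Lagrange equation of the problem $\inf J$ reads
\[
\frac{\theta_1 p}{A}\,(-\Delta_{p,\mathbb{G}})^s u+\frac{\theta_2 p}{B}\,|u|^{p-2}u=\frac{q}{M}\,|u|^{q-2}u,
\]
and using the two homogeneities of $J$ one rescales $u\mapsto a\,u(\delta_b\cdot)$ for suitable $a,b>0$ determined by $A,B,M$ to turn this into \eqref{r1.71}, with $\phi$ conversely solving the rescaled problem. That the infimum is actually attained---so this formal correspondence is legitimate---is exactly where the analytic machinery is needed: one takes a minimizing sequence for $J$, normalizes it by the scaling and dilation invariances, applies the subelliptic Lions-type vanishing lemma to exclude loss of mass, and then invokes the compact embedding of Theorem \ref{l-3} for $W^{s,p}_0(\mathbb{G})$ to extract a strongly $L^q$-convergent subsequence whose limit is a minimizer; by the least-energy identification of Theorem \ref{rt1.2i} this minimizer may be taken to be $\phi$. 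Granting $C_{GN,\mathbb{G}}^{-1}=J(\phi)$, it remains to substitute the ratio relations:
\[
J(\phi)=\frac{A^{\theta_1}B^{\theta_2}}{M}=\frac{\left(\frac{\theta_1}{\theta_2}B\right)^{\theta_1}B^{\theta_2}}{\frac{q}{p\theta_2}B}=\frac{p\theta_2}{q}\left(\frac{\theta_1}{\theta_2}\right)^{\theta_1}B^{\theta_1+\theta_2-1},
\]
and since $\theta_1+\theta_2-1=\frac{q-p}{p}$ this is precisely the first line of \eqref{r1.10}; feeding in $B=\frac{pqs-Q(q-p)}{(q-p)s}d$ gives the second line.

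The main obstacle is the attainment step just described: unlike in \cite{Wei82}, no rearrangement is available, so compactness of minimizing sequences must be recovered from the combination of the subelliptic Lions vanishing lemma and the compact embedding of Theorem \ref{l-3}, and one must track the scaling and dilation normalization carefully to prevent the minimizer from escaping to $0$ or $\infty$. A secondary technical point is the rigorous justification of the Pohozaev identity for the nonlocal operator $(-\Delta_{p,\mathbb{G}})^s$, which requires enough regularity of $\phi$ to differentiate $\lambda\mapsto I(\phi\circ\delta_\lambda)$ under the integral sign; once these two points are settled, the remaining computations are purely algebraic.
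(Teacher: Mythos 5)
Your algebraic core is correct and coincides with the paper's: the Nehari identity $A+B=M$, the dilation (Pohozaev-type) identity obtained by differentiating $\lambda\mapsto I(\phi\circ D_\lambda)$, the resulting ratios $A/B=\theta_1/\theta_2$ and $M=\frac{q}{p\theta_2}B$, the formula $B=\frac{pqs-Q(q-p)}{(q-p)s}\,d$, and the final evaluation of $J(\phi)$ are exactly Lemma \ref{rl3.1} and the closing computation in the paper's proof of Theorem \ref{rt1.3}. The genuine gap is in the step that actually proves $C_{GN,\mathbb{G}}^{-1}=J(\phi)$. You make this contingent on the \emph{attainment} of $\inf J$, to be obtained by ``minimizing sequence $+$ Lions vanishing lemma $+$ compact embedding $\Rightarrow$ strongly $L^q$-convergent subsequence whose limit is a minimizer.'' As stated, this does not go through: after normalizing $\|u_k\|_{L^p(\G)}=\|u_k\|_{L^q(\G)}=1$ and translating so that vanishing is excluded, Lemma \ref{vanishinglemma} and the local compactness in Theorem \ref{l-3} only give a subsequence converging weakly in $W^{s,p}_0(\G)$ and strongly in $L^q_{\mathrm{loc}}(\G)$ to some nonzero limit $v$; they give neither global strong $L^q(\G)$ convergence nor the exclusion of dichotomy (part of the mass escaping to infinity). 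Consequently one only knows $\|v\|_{L^q(\G)}\leq 1$, $\|v\|_{L^p(\G)}\leq 1$, $[v]_{s,p,\G}^p\leq\liminf_k [u_k]_{s,p,\G}^p$, and since the $L^q$-norm sits in the \emph{denominator} of $J$, no comparison of $J(v)$ with $\inf J$ follows. Ruling out the splitting scenario requires a Brezis--Lieb-type analysis of exactly the kind occupying the hardest part of the paper's proof of Theorem \ref{rt1.2i}, now redone for the functional $J$; this is missing from your proposal. A secondary (fixable) gap: your claim that the minimizer ``may be taken to be $\phi$'' needs an argument, since a minimizer of $J$ rescales to \emph{some} solution $v$ of \eqref{r1.71}, not to $\phi$; one must note that the identities of Lemma \ref{rl3.1} hold for every nontrivial weak solution, so on the solution set both $I$ and $J$ are increasing functions of $\|\cdot\|_{L^p(\G)}$, whence least energy implies least $J$-value.

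The paper's proof shows this entire difficulty can be bypassed, which is precisely its point of departure from Weinstein's rearrangement-based scheme. Given $\phi$ from Theorem \ref{rt1.2i}, Lemma \ref{rl3.2} (an elementary Nehari-manifold consequence, no compactness needed) states that $\phi$ minimizes $\|\cdot\|_{W^{s,p}_0(\G)}^p$ among all functions with the same $L^q$-norm as $\phi$. Then, for an \emph{arbitrary} $u\neq 0$, one chooses the two parameters in $w:=\lambda\, u(D_\mu\,\cdot)$ so that $\|w\|_{L^p(\G)}=\|\phi\|_{L^p(\G)}$ and $\|w\|_{L^q(\G)}=\|\phi\|_{L^q(\G)}$; Lemma \ref{rl3.2} yields $[w]_{s,p,\G}^p\geq[\phi]_{s,p,\G}^p$ (the equal $L^p$ parts cancel), and unwinding the scaling converts this into the pointwise lower bound $J(u)\geq J(\phi)$, matching the trivial upper bound $C_{GN,\G}^{-1}\leq J(\phi)$. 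Thus the only concentration-compactness in the whole argument is the one already inside Theorem \ref{rt1.2i}. To repair your proposal you must either supply the full concentration-compactness/Brezis--Lieb argument for $J$ (including preservation of both normalization constraints in the limit), or replace your attainment step by this two-parameter scaling plus constrained-minimality argument.
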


We also obtain a similar result for the fractional subelliptic Sobolev inequality on stratified Lie groups. Let us now recall the following Sobolev inequality from \cite[Theorem 1.1]{GKR23}
\begin{equation}\label{ineq-Sobolev}
		\left(\int_{\mathbb{G}}|u(x)|^{q}dx\right)^{\frac{p}{q}}\leq C\left[\int_{\mathbb{G}}\int_{\mathbb{G}}\frac{|u(x)-u(y)|^{p}}{\left|y^{-1} x\right|^{Q+ps}} dxdy+\int_{\mathbb{G}}|u(x)|^{p}dx\right],
	\end{equation}
 for $u \in W^{s,p}_0(\G).$
 
The following result expresses the best constant in terms of the least energy solution of \eqref{r1.71}. 

	\begin{theorem}\label{main thm2}
		Let $0<s<1<p<\infty$ and $Q>ps$ and $p<q<p_s^*:=\frac{Qp}{Q-ps}$, where $Q$ is the homogeneous dimension of a stratified Lie group $\mathbb{G}$. Let  $C_{S, \mathbb{G}}>0$ be the least positive constant such that the inequality \eqref{ineq-Sobolev} is true. Then we have
		
		\begin{align}
		C_{S,\mathbb{G}}^{-1}&=\left(\frac{spq}{spq-Q(q-p)}\int_{\mathbb{G}}|\phi(x)|^{p}dx\right)^{\frac{q-p}{q}}\label{Sobo-const}\\
			&=\left(\frac{pq}{q-p} d \right)^{\frac{q-p}{q}},\label{Sobo-const2}
		\end{align}
		where $d=I(\phi):=\inf_{u\in\mathcal{N}}I(u)$, $\phi$ being a least energy solution of \eqref{r1.71} and 
		
		\begin{equation}
		C_{S, \mathbb{G}}^{-1}:=\inf_{u\in W_0^{s,p}(\mathbb{G})\setminus\{0\}}\frac{\int_{\mathbb{G}}\int_{\mathbb{G}}\frac{|u(x)-u(y)|^{p}}{\left|y^{-1} x\right|^{Q+ps}} dxdy+\int_{\mathbb{G}}|u(x)|^{p}dx}
		{\left(\int_{\mathbb{G}}|u(x)|^{q}dx\right)^{\frac{p}{q}}}.
	\end{equation}
	\end{theorem}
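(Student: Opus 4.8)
The plan is to exploit the scale invariance of the Rayleigh quotient defining $C_{S,\G}$ and to tie its infimum to the least energy level $d$ on the Nehari manifold, obtaining \eqref{Sobo-const2} directly and then upgrading to \eqref{Sobo-const} via a Pohozaev-type relation. Throughout I would write $A(u):=\int_{\G}\int_{\G}\frac{|u(x)-u(y)|^{p}}{|y^{-1}x|^{Q+ps}}\,dx\,dy+\int_\G|u|^p\,dx$ and $B(u):=\int_\G|u|^q\,dx$, so that (with $I$ and $\mathcal N$ as in Section \ref{sec3}) one has $I(u)=\frac1pA(u)-\frac1qB(u)$, the Nehari manifold is $\mathcal N=\{u\neq0:\ A(u)=B(u)\}$, and the best constant is $C_{S,\G}^{-1}=\inf_{u\neq0}A(u)/B(u)^{p/q}$. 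The first observation is that this quotient is invariant under the amplitude scaling $u\mapsto\mu u$, since $A(\mu u)=\mu^pA(u)$ and $B(\mu u)=\mu^qB(u)$.

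Second, I would use this invariance to restrict the infimum to $\mathcal N$. For any $u\neq0$ the choice $\mu=(A(u)/B(u))^{1/(q-p)}$ places $\mu u$ on $\mathcal N$, and since the quotient is unchanged it may be evaluated there. On $\mathcal N$ one has $A=B$, whence $A(v)/B(v)^{p/q}=A(v)^{(q-p)/q}$, and the Nehari relation gives $I(v)=\frac{q-p}{pq}A(v)$, so that $A(v)^{(q-p)/q}=\big(\frac{pq}{q-p}I(v)\big)^{(q-p)/q}$. Taking the infimum over $v\in\mathcal N$ and using $\inf_{\mathcal N}I=d=I(\phi)$ yields \eqref{Sobo-const2}, namely $C_{S,\G}^{-1}=\big(\frac{pq}{q-p}d\big)^{(q-p)/q}$, the infimum being attained exactly at the ground state $\phi$.

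Third, to pass to \eqref{Sobo-const} I must express $d$ through $\|\phi\|_{L^p(\G)}^p$, which requires a Pohozaev identity. This relation is in fact already contained in Theorem \ref{rt1.3}: comparing the two expressions in \eqref{r1.10} gives $d=\frac{s(q-p)}{spq-Q(q-p)}\|\phi\|_{L^p(\G)}^p$, which I may invoke directly. For completeness I would also derive it via the dilation scaling $\phi_t(x):=\phi(D_tx)$; using that $D_t$ is an automorphism with $|D_tx|=t|x|$ and Jacobian $t^Q$, one gets the homogeneities $[\phi_t]_{s,p,\G}^p=t^{ps-Q}[\phi]_{s,p,\G}^p$, $\|\phi_t\|_{L^p(\G)}^p=t^{-Q}\|\phi\|_{L^p(\G)}^p$ and $\|\phi_t\|_{L^q(\G)}^q=t^{-Q}\|\phi\|_{L^q(\G)}^q$. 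Since $\phi$ is a weak solution, it is a critical point of $I$, so $\frac{d}{dt}I(\phi_t)\big|_{t=1}=0$; differentiating the resulting explicit expression for $I(\phi_t)$ and combining with the Nehari relation $A(\phi)=B(\phi)$ produces $spq\,[\phi]_{s,p,\G}^p=Q(q-p)\,\|\phi\|_{L^q(\G)}^q$, equivalently the stated relation for $d$. Substituting into \eqref{Sobo-const2} gives \eqref{Sobo-const}; note that $q<p_s^*$ forces $spq-Q(q-p)>0$, so every exponent is well defined.

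The main obstacle is the rigorous justification of the Pohozaev step: the identity $\frac{d}{dt}I(\phi_t)|_{t=1}=0$ amounts to testing the equation against the infinitesimal dilation generator $\frac{d}{dt}\phi_t|_{t=1}$, which is only formally in $W_0^{s,p}(\G)$. Making this legitimate needs either sufficient regularity and decay of the ground state $\phi$ or an approximation argument validating the dilation variation as an admissible test function; this is exactly the point where the Euclidean argument would lean on known regularity of solutions, and in the stratified setting it must be handled without rearrangement, relying instead on the regularity theory for the fractional $p$-sublaplacian. The remaining steps are elementary scaling computations.
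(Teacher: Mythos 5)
Your proof is correct, but it reaches the two equalities in the opposite order from the paper and by a different mechanism, so it is worth comparing. The paper proves \eqref{Sobo-const} first: the upper bound comes from evaluating the Rayleigh quotient at $\phi$ and applying the identities of Lemma \ref{rl3.1}, while the lower bound normalises an arbitrary $u$ to $\tilde u$ with $\|\tilde u\|_{L^q(\G)}=\|\phi\|_{L^q(\G)}$ and invokes the constrained-minimisation property of $\phi$ from Lemma \ref{rl3.2}; the equality \eqref{Sobo-const2} is then obtained by substituting \eqref{exofd}. You instead observe that the quotient $A(u)/B(u)^{p/q}$ (in your notation) is invariant under $u\mapsto\mu u$, that every ray through a nonzero $u$ meets $\mathcal N$ (your $\mu$ is exactly the paper's $\theta_u$ from Lemma \ref{rl2.1}), and that on $\mathcal N$ the quotient equals $\left(\tfrac{pq}{q-p}\,I(v)\right)^{(q-p)/q}$, an increasing function of $I(v)$; taking infima gives \eqref{Sobo-const2} directly. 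This bypasses Lemma \ref{rl3.2} entirely and shows that \eqref{Sobo-const2} is a pure Nehari-manifold identity, valid for $d$ defined as an infimum even before one knows it is attained; what the paper's route buys in exchange is that it exhibits $\phi$ as an extremiser and yields the $L^p$-norm form \eqref{Sobo-const} without passing through $d$. For \eqref{Sobo-const}, your invocation of Theorem \ref{rt1.3} (comparing the two lines of \eqref{r1.10}) is legitimate, since that theorem is established before this one, and it is the same relation \eqref{exofd} that the paper substitutes. Finally, the "main obstacle" you flag in the Pohozaev step is overcautious: no regularity theory for the fractional $p$-sublaplacian is needed. Compose the dilation family $\tilde\phi_\lambda(x)=\lambda^{Q/p}\phi(D_\lambda x)$ with the Nehari projection $\theta(\lambda)$ of Lemma \ref{rl2.1}; since all three norms of $\tilde\phi_\lambda$ are explicit powers of $\lambda$, the map $\lambda\mapsto I\left(\theta(\lambda)\tilde\phi_\lambda\right)$ is an elementary smooth function of one real variable which is $\geq d$ everywhere and equals $d$ at $\lambda=1$, and since $\partial_\theta I(\theta\phi)\big|_{\theta=1}=\mathcal{L}(\phi)=0$, the chain rule at the minimum forces the dilation derivative to vanish, which is precisely \eqref{eq5.3} --- no test-function admissibility or decay issues arise.
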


\begin{remark}\label{rr1.4}
	We do not know if $\phi$ is a unique least energy solution of \eqref{r1.71}. But equality in \eqref{Sobo-const2} and \eqref{r1.10} implies that $C_{S,\mathbb{G}}$ and $C_{GN,\mathbb{G}}$ are independent of the choice of $\phi$. Further, we want to mention here that if we replace $Q$ with the Euclidean dimension $N$, and $\mathbb{G}$ by $\mathbb{R}^N$ then both of these constants reduce to the classical results. For instance, for $\mathbb{G}=\mathbb{R}^N$, $s=1$, $p=2,$ it is the same as the result due to Weinstein \cite{Wei82} whereas for $\mathbb{G}=\mathbb{H}^N$, $s=1$, $p=2,$ it coincides with the results due to Chen and Rocha \cite{CR13II}
\end{remark}
\begin{remark}  We can easily establish a relation between the sharp constant $C_{GN, \G}$ in the fractional subelliptic Gagliardo-Nirenberg inequality \eqref{GN-intro} and the sharp constant $C_{S, \G}$ in the fractional subelliptic Sobolev inequality \eqref{ineq-Sobolev} on the stratified Lie groups. Indeed, from Theorem \ref{rt1.3} we have 
\begin{align*}
    C_{GN,\mathbb{G}}^{-1} &=\frac{pqs-Q(q-p)}{pqs}\left(\frac{Q(q-p)}{pqs-Q(q-p)}\right)^{\frac{Q(q-p)}{sp^2}}\left(\frac{pqs-Q(q-p)}{(q-p)s} d\right)^{\frac{q-p}{p}}, 
\end{align*}
which implies that 

$$C^{-\frac{p}{q}}_{GN, \G}=\frac{pqs-Q(q-p)}{pqs} \left(\frac{Q(q-p)}{pqs-Q(q-p)}\right)^{\frac{Q(q-p)}{pqs}} C^{-1}_{S, \G}.$$
\end{remark}

Now,  we  derive the logarithmic fractional Sobolev  inequalities with an explicit constant with the help of Theorem \ref{rt1.3} and Theorem \ref{main thm2}. In this regard, our result is the following theorem.

\begin{theorem} \label{loginhom} Let $s \in (0, 1),$ $p \in (1, \infty), Q>ps$ and $p<q<p^*:=\frac{Qp}{Q-ps},$ where $Q$ is the homogeneous dimension of a stratified Lie group $\mathbb{G}.$
 Then for any $u \neq 0,$ we have 
\begin{align} 
    \int_{\G} \frac{|u(x)|^p}{\|u\|_{L^p(\G)}^p} \log \left( \frac{|u(x)|^p}{\|u\|^p_{L^p(\G)}} \right) dx \leq \frac{Q}{s} \log \left( C_{S, \G, p}\frac{\left[\int_{\mathbb{G}}\int_{\mathbb{G}}\frac{|u(x)-u(y)|^{p}}{\left|y^{-1} x\right|^{Q+ps}} dxdy+\int_{\mathbb{G}}|u(x)|^{p})dx\right]}{\|u\|_{L^p(G)}^p} \right),
\end{align} where $C_{S, \G, p}$ is  given by 
$$C_{S, \G, p}:=\left( \frac{s}{Qd}\right)^{\frac{sp}{Q}}.$$
Here $d=I(\phi):=\inf_{u\in\mathcal{N}}I(u)$, $\phi$ being a least energy solution of \eqref{r1.71}.
\end{theorem}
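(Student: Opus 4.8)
The plan is to derive the logarithmic Sobolev inequality as an optimized consequence of the sharp fractional Sobolev inequality \eqref{ineq-Sobolev}, using Jensen's inequality together with a dilation (scaling) argument to extract the explicit constant $C_{S,\G,p}$. The starting point is the sharp inequality
\begin{equation*}
\left(\int_{\mathbb{G}}|u(x)|^{q}dx\right)^{\frac{p}{q}}\leq C_{S,\mathbb{G}}\left[\int_{\mathbb{G}}\int_{\mathbb{G}}\frac{|u(x)-u(y)|^{p}}{\left|y^{-1} x\right|^{Q+ps}} dxdy+\int_{\mathbb{G}}|u(x)|^{p}dx\right],
\end{equation*}
where by Theorem \ref{main thm2} we know $C_{S,\G}=\left(\tfrac{pq}{q-p}d\right)^{-\frac{q-p}{q}}$. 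The idea is that a logarithmic inequality is the infinitesimal form of the family of $L^q$ Sobolev inequalities as $q\downarrow p$, so one should first reduce to the normalized case $\|u\|_{L^p(\G)}=1$ and then apply Jensen's inequality to the concave function $\log$.

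First I would normalize: it suffices to prove the statement for $u$ with $\|u\|_{L^p(\G)}=1$, since both sides are invariant under the replacement $u\mapsto u/\|u\|_{L^p(\G)}$. With this normalization, $|u|^p\,dx$ is a probability measure, and Jensen's inequality applied to $\log$ gives
\begin{equation*}
\int_{\G}|u|^p\log\!\left(|u|^{q-p}\right)dx=\frac{q-p}{p}\int_{\G}|u|^p\log\!\left(|u|^{p}\right)dx\leq \log\!\left(\int_{\G}|u|^q\,dx\right).
\end{equation*}
Hence the entropy term is controlled by $\frac{p}{q-p}\log\!\left(\int_{\G}|u|^q\,dx\right)$, and applying the sharp Sobolev inequality to bound $\int_{\G}|u|^q\,dx=\left(\int_{\G}|u|^q\,dx\right)$ from above yields a logarithmic estimate whose constant still depends on the free parameter $q\in(p,p_s^*)$. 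The next step is to optimize over $q$: setting the Sobolev energy bracket equal to $E:=\int_{\mathbb{G}}\int_{\mathbb{G}}\frac{|u(x)-u(y)|^{p}}{|y^{-1}x|^{Q+ps}}dxdy+\int_{\mathbb{G}}|u(x)|^{p}dx$ and writing $t=q-p$, one obtains an upper bound of the form $\frac{p}{t}\log\!\big(C_{S,\G}(t)^{q/p}E\big)$ for the entropy; differentiating in $t$ and sending $t\to 0^+$ selects the sharp logarithmic constant. Because $C_{S,\G}$ depends on $q$ through the ground-state energy $d=d(q)$, the clean explicit form $C_{S,\G,p}=\left(\tfrac{s}{Qd}\right)^{sp/Q}$ emerges from the leading-order expansion as $q\to p^+$, where the homogeneous-dimension factor $Q/s$ on the right-hand side is exactly the reciprocal of the exponent $\frac{s}{Q}$ that appears in the scaling relation $p_s^*/p=\frac{Q}{Q-ps}$.

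The second, and in my view principal, technical device is a dilation argument to sharpen the constant: after the Jensen step one should not apply \eqref{ineq-Sobolev} to $u$ directly but to the dilated family $u_\lambda(x):=\lambda^{Q/p}u(D_\lambda x)$, which preserves $\|u_\lambda\|_{L^p(\G)}$ while rescaling the Gagliardo energy by $\lambda^{ps}$ and the $L^p$ norm by $\lambda^{0}$; optimizing the resulting bound over $\lambda>0$ is what converts the \emph{inhomogeneous} Sobolev inequality into the scale-invariant logarithmic form and pins down $C_{S,\G,p}$ with the exponent $sp/Q$. The hard part will be carrying out this two-fold optimization (over the dilation parameter $\lambda$ and over the exponent $q$, or equivalently performing the limit $q\to p^+$ cleanly) so that the ground-state energy $d$ appears with precisely the power $\left(\tfrac{s}{Qd}\right)^{sp/Q}$; tracking the interplay between the $q$-dependence of $d(q)$ and the explicit algebraic prefactors in \eqref{Sobo-const2} is the step most prone to error, and I would verify it by checking the Euclidean case $\G=\mathbb{R}^N$, $s=1$, $p=2$ against the classical logarithmic Sobolev inequality as a consistency test.
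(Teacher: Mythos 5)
Your opening steps are sound and coincide in substance with the paper's own strategy: the Jensen argument with the probability measure \(|u|^p\,dx/\|u\|_{L^p(\G)}^p\) is exactly the logarithmic H\"older inequality (Theorem \ref{LogHol}, proved in the paper via convexity of \(r\mapsto\log\|u\|_{1/r}\)), and combining it with the sharp Sobolev inequality \eqref{ineq-Sobolev} gives, for every \(q\in(p,p_s^*)\),
\[
\int_{\G}\frac{|u(x)|^p}{\|u\|_{L^p(\G)}^p}\log\left(\frac{|u(x)|^p}{\|u\|_{L^p(\G)}^p}\right)dx
\le \frac{q}{q-p}\log\left(C_{S,\G}\,\frac{[u]_{s,p,\G}^p+\|u\|_{L^p(\G)}^p}{\|u\|_{L^p(\G)}^p}\right).
\]
The genuine gap is your optimization over \(q\): you send \(t=q-p\to 0^+\), i.e.\ \(q\to p^+\), but that is the wrong endpoint. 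The prefactor \(\frac{q}{q-p}\) is strictly decreasing in \(q\), so over the admissible range it is minimized as \(q\uparrow p_s^*=\frac{Qp}{Q-ps}\), where it tends to \(\frac{p_s^*}{p_s^*-p}=\frac{Q}{sp}\); simultaneously the sharp Sobolev constant from Theorem \ref{main thm2} satisfies \(C_{S,\G}=\left(\frac{pq}{q-p}\,d\right)^{-\frac{q-p}{q}}\to\left(\frac{Q}{s}\,d\right)^{-\frac{ps}{Q}}=\left(\frac{s}{Qd}\right)^{\frac{sp}{Q}}\) in that same limit. This critical limit \(q\to p_s^*\) is precisely where both the homogeneous-dimension prefactor and the exponent \(sp/Q\) in \(C_{S,\G,p}\) come from. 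In your limit \(q\to p^+\), by contrast, the constant inside the logarithm tends to \(1\) (note \(\bigl(\tfrac{pq}{q-p}\,d\bigr)^{-(q-p)/q}\to 1\) provided \(d(q)\) stays bounded), while \([u]_{s,p,\G}^p+\|u\|_{L^p(\G)}^p>\|u\|_{L^p(\G)}^p\) for every admissible \(u\neq 0\) (nonzero constants do not lie in \(L^p(\G)\)), so the logarithm stays bounded away from zero and the right-hand side \(\frac{p}{t}\log(\cdots)\) diverges to \(+\infty\): the limit \(q\to p^+\) yields a vacuous inequality, not the stated one. Your heuristic consistency check is also off: the reciprocal of the ratio \(p_s^*/p=\frac{Q}{Q-ps}\) is \(1-\frac{ps}{Q}\), not \(\frac{s}{Q}\).

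The dilation device is likewise misdirected for this particular theorem. Optimizing \eqref{ineq-Sobolev} over the family \(u_\lambda(x)=\lambda^{Q/p}u(D_\lambda x)\) (which fixes \(\|u\|_{L^p(\G)}\), scales the Gagliardo seminorm by \(\lambda^{ps}\), and scales \(\|u\|_{L^q(\G)}^p\) by \(\lambda^{Q(q-p)/q}\)) converts the inhomogeneous Sobolev inequality into the multiplicative Gagliardo--Nirenberg inequality; feeding that into the logarithmic H\"older step produces the \emph{homogeneous} logarithmic inequality of Theorem \ref{th1.8}, whose right-hand side contains only the Gagliardo seminorm, not the inhomogeneous bracket required in Theorem \ref{loginhom}. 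The paper's proof needs neither dilations nor any expansion of \(d(q)\): keep \(q\) as a free parameter, pass to the limit \(q\to p_s^*\) in the prefactor, and set \(C_{S,\G,p}:=\limsup_{q\to p_s^*}C_{S,\G}\). If you replace your limit \(q\to p^+\) by \(q\uparrow p_s^*\) and drop the dilation step, your argument becomes the paper's proof.
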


Before stating our next result, let us discuss the idea of the proof of Theorem \ref{loginhom} and the next theorem. The method is quite simple and  uses the logarithmic H\"older inequalities (see Theorem \ref{LogHol}) combined with the subelliptic fractional Gagliardo-Nirenberg and Sobolev inequalities. For the Euclidean case, this method was possibly first used by Merker \cite{Mer08} and later in many papers in several different scenarios, for example, \cite{HYZ12} for the logarithmic Sobolev and Gagliardo-Nirenberg inequalities under the Lorentz norms and \cite{KRS20, CKR21b} for logarithmic Sobolev and Gagliardo-Nirenberg inequalities on Lie groups. We now state our last result concerning a different version of the logarithmic fractional Sobolev inequality with a homogeneous norm. This result was first proved in  \cite{KRS23} without an explicit constant.     

\begin{theorem} \label{th1.8} Let $s \in (0, 1),$ $p \in (1, \infty), Q>ps$ and $p<q<p^*:=\frac{Qp}{Q-ps},$ where $Q$ is the homogeneous dimension of a stratified Lie group $\mathbb{G}.$
 Then for any $u \neq 0,$ we have 
\begin{align} 
    \int_{\G} \frac{|u(x)|^p}{\|u\|_{L^p(\G)}^p} \log \left( \frac{|u(x)|^p}{\|u\|^p_{L^p(\G)}} \right) dx \leq \frac{Q}{s} \log \left( C_{GN, \G}^{\frac{ps}{Q(q-p)}}\frac{\left(\int_{\mathbb{G}}\int_{\mathbb{G}}\frac{|u(x)-u(y)|^{p}}{\left|y^{-1} x\right|^{Q+ps}} dxdy\right)^{\frac{1}{p}}}{\|u\|_{L^p(G)}} \right),
\end{align} where $C_{GN, \G}$ is the best constant in the fractional subelliptic Gagliardo-Nirenberg inequality given by 

$$C_{GN, \G}:=\frac{pqs}{pqs-Q(q-p)}\left(\frac{Q(q-p)}{pqs-Q(q-p)}\right)^{\frac{Q(p-q)}{sp^2}}\left(\frac{pqs-Q(q-p)}{(q-p)s} d\right)^{\frac{p-q}{p}}.$$
\end{theorem}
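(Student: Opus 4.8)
The plan is to derive this homogeneous-norm logarithmic inequality by feeding the fractional subelliptic Gagliardo--Nirenberg inequality \eqref{r1.6} into the logarithmic H\"older inequality (Theorem \ref{LogHol}). Throughout, abbreviate the Gagliardo energy and the $L^p$-mass by
\[
A:=\int_{\mathbb{G}}\int_{\mathbb{G}}\frac{|u(x)-u(y)|^{p}}{\left|y^{-1}x\right|^{Q+ps}}\,dxdy,\qquad B:=\|u\|_{L^p(\mathbb{G})}^p .
\]

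First I would apply the logarithmic H\"older inequality with the exponent pair $p<q$: after normalizing $u$ by $\|u\|_{L^p(\mathbb{G})}$ (so that $|u|^p/B\,dx$ is a probability measure) and using concavity of $\log$, one obtains
\[
\int_{\mathbb{G}}\frac{|u|^p}{\|u\|_{L^p}^p}\log\!\left(\frac{|u|^p}{\|u\|_{L^p}^p}\right)dx\le \frac{pq}{q-p}\,\log\!\left(\frac{\|u\|_{L^q}}{\|u\|_{L^p}}\right).
\]

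Next I would estimate the right-hand side using \eqref{r1.6}, which in the abbreviated notation reads $\|u\|_{L^q}^q\le C_{GN,\mathbb{G}}\,A^{\frac{Q(q-p)}{sp^2}}B^{\frac{spq-Q(q-p)}{sp^2}}$. Taking logarithms, dividing by $q$, and subtracting $\log\|u\|_{L^p}=\tfrac1p\log B$ yields
\[
\log\!\left(\frac{\|u\|_{L^q}}{\|u\|_{L^p}}\right)\le \frac{1}{q}\log C_{GN,\mathbb{G}}+\frac{Q(q-p)}{sp^2 q}\log\frac{A}{B},
\]
where the coefficient of $\log B$ collapses because $\frac{spq-Q(q-p)}{sp^2 q}-\frac1p=-\frac{Q(q-p)}{sp^2 q}$.

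Finally I would substitute this into the logarithmic H\"older bound. The prefactor $\frac{pq}{q-p}$ turns the first term into $\frac{p}{q-p}\log C_{GN,\mathbb{G}}$ and the second into $\frac{Q}{sp}\log\frac{A}{B}$; since $\frac{A}{B}=\bigl(A^{1/p}/\|u\|_{L^p}\bigr)^p$, factoring $\frac{Q}{s}$ out of the whole right-hand side reproduces exactly $\frac{Q}{s}\log\bigl(C_{GN,\mathbb{G}}^{ps/(Q(q-p))}A^{1/p}/\|u\|_{L^p}\bigr)$, the claimed estimate. There is no genuine obstacle here, as the paper itself remarks that the argument is elementary; the only point demanding care is the exponent bookkeeping in the Gagliardo--Nirenberg step, and in particular the algebraic cancellation $\frac{spq-Q(q-p)}{sp^2 q}-\frac1p=-\frac{Q(q-p)}{sp^2 q}$, which is precisely what produces the clean homogeneous-norm quotient $A^{1/p}/\|u\|_{L^p}$ on the right.
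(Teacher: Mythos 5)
Your proposal is correct and follows essentially the same route as the paper: apply the logarithmic H\"older inequality (Theorem \ref{LogHol}) with exponents $p<q$, insert the fractional Gagliardo--Nirenberg inequality \eqref{r1.6}, and verify that the exponent of $\|u\|_{L^p(\G)}$ cancels so that only the quotient $A^{1/p}/\|u\|_{L^p(\G)}$ survives with the prefactor $\frac{Q}{s}$. The only cosmetic differences are that you pull the factor $p$ outside the logarithm (prefactor $\frac{pq}{q-p}$ instead of the paper's $\frac{q}{q-p}$ acting on $p$-th powers) and sketch the Jensen-type proof of the log-H\"older step rather than merely citing it; the bookkeeping identity $\frac{spq-Q(q-p)}{sp^2q}-\frac{1}{p}=-\frac{Q(q-p)}{sp^2q}$ you highlight is exactly the cancellation the paper's chain of equalities performs.
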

Here $d=I(\phi):=\inf_{u\in\mathcal{N}}I(u)$, $\phi$ being a least energy solution of \eqref{r1.71}.

The rest of the paper is organised as follows: in Section \ref{s2} we present some basics of analysis on stratified Lie groups and fractional subelliptic Sobolev spaces. Section \ref{secc3} will be devoted to the proof of Lions vanishing lemma for stratified Lie groups. The proof of Theorem \ref{rt1.2i} is contained in Section \ref{sec3}. The proof of Theorem \ref{rt1.3} regarding the best constant of the fractional Gagliardo-Nirenberg inequality on stratified Lie groups is given in Section \ref{sec4}. The proof of Theorem \ref{main thm2} is presented in Section \ref{sec5}. We end this paper in Section \ref{sec6} by discussing the proof of Theorem \ref{loginhom} and Theorem \ref{th1.8} talking about the logarithmic version of fractional subelliptic Sobolev inequalities with explicit constants. 

\section{Preliminaries: Stratified Lie groups and fractional Sobolev spaces}\label{s2}

Prior to proceeding with the main results, we recall some preliminary tools of the stratified Lie groups  and the fractional Sobolev spaces defined on them.  There are many ways to introduce the notion of stratified Lie groups, for instance, one may refer to  books and monographs \cite{FS82,BLU07,FR16,RS19}. In his seminal paper \cite{F75}, Folland extensively investigated the  properties of function spaces on these groups.  For precise studies and properties on stratified Lie group, we refer \cite{GL92, F75, FS82, BLU07, FR16,GKR23}.

\begin{definition}\label{d1}
	A Lie group $\mathbb{G}$ (on $\mathbb{R}^N$) is said to be homogeneous if, for each $\lambda>0$, there exists an automorphism $D_{\lambda}:\mathbb{G} \rightarrow\mathbb{G}$  defined by $D_{\lambda}(x)=(\lambda^{r_1}x_1, \lambda^{r_2}x_2,..., \lambda^{r_N}x_N)$ for $r_i>0,\,\forall\, i=1,2,...,N$. The map $D_\lambda$ is called a {\it dilation} on $\mathbb{G}$. 
\end{definition}
For simplicity, we sometimes prefer to use the notation $\lambda x$ to denote the dilation $D_{\lambda}x$. Note that, if $\lambda x$ is a dilation then $\lambda^r x$ is also a dilation. The number $Q=r_1+r_2+...+r_N$ is called the homogeneous dimension of the homogeneous  Lie group $\mathbb{G}$ and the natural number $N$ represents the topological dimension of $\mathbb{G}.$ The Haar measure on $\mathbb{G}$ is denoted by $dx$ and it is nothing but the usual Lebesgue measure on $\mathbb{R}^N.$

\begin{definition}\label{d-carnot}
	A homogeneous Lie group $\mathbb{G} = (\mathbb{R}^N, \circ)$ is called a stratified Lie group (or a homogeneous Carnot group) if the following two conditions are fulfilled:
	\begin{enumerate}[label=(\roman*)]
		\item For some natural numbers $N_1 + N_2+ ... + N_k = N$ the decomposition $\mathbb{R}^N = \mathbb{R}^{N_1} \times \mathbb{R}^{N_2} \times ... \times \mathbb{R}^{N_k}$ holds, and for each $\lambda > 0$ there exists a dilation of the form $D_{\lambda}(x)=  (\lambda x^{(1)}, \lambda^2 x^{(2)},..., \lambda^{k}x^{(k)})$ which is an automorphism of the group $\mathbb{G}$. Here $x^{(i)}\in \mathbb{R}^{N_i}$ for $i = 1,2, ..., k$.
		
		\item With $N_1$  as in the above decomposition of $\mathbb{R}^N$, let $X_1, ..., X_{N_1}$ be the left invariant vector fields on $\mathbb{G}$ such that $X_k(0) = \frac{\partial}{\partial x_k}|_0$ for $k = 1, ..., N_1$. Then the H\"{o}rmander condition $rank(Lie\{X_1, ..., X_{N_1} \}) = N$
		holds for every $x \in \mathbb{R}^{N}$. In other words, the Lie algebra corresponding to the Lie group $\mathbb{G}$ is spanned by the iterated commutators of $X_1, ..., X_{N_1}$.
	\end{enumerate}
\end{definition}

Here $k$ is called the step of the stratified Lie group.  Note that, in this case, the homogeneous dimension becomes $Q=\sum_{i=1}^{i=k}iN_i$. Furthermore, the left-invariant vector fields $X_j$ satisfy the divergence theorem and they can be written explicitly as 
\begin{equation}\label{d-vec fld}
	X_i=\frac{\partial}{\partial x_i^{(1)}} + \sum_{j=2}^{k}\sum_{l=1}^{N_1} a^{(j)}_{i,l}(x^1, x^2, ..., x^{j-1})\frac{\partial}{\partial x_l^{(j)}}.
\end{equation}

For simplicity, we set $n=N_1$ in  the above Definition \ref{d-carnot}. 

An absolutely continuous curve $\gamma:[0,1]\rightarrow\mathbb{R}$ is said to be admissible, if there exist functions $c_i:[0,1]:\rightarrow\mathbb{R}$, for $i=1,2,...,n,$ such that
$${\dot{\gamma}(t)}=\sum_{i=1}^{i=n}c_i(t)X_i(\gamma(t))~\text{and}~ \sum_{i=1}^{i=n}c_i(t)^2\leq1.$$
Observe that the functions $c_i$ may not be unique as the vector fields $X_i$ may not be linearly independent. For any $x,y\in\mathbb{G}$ the Carnot-Carath\'{e}odory distance is defined as
$$\rho_{cc}(x,y)=\inf\{l>0: ~\text{there exists an admissible}~ \gamma:[0,l]\rightarrow\mathbb{G} ~\text{with}~ \gamma(0)=x ~\text{\&}~ \gamma(l)=y \}.$$
 The H\"{o}rmander condition for the vector fields $X_1,X_2,...X_{N_1}$ ensures that $\rho_{cc}$ is a metric. The space $(\mathbb{G}, \rho_{cc})$ is is known as a Carnot-Carath\'{e}odory space.


Let us now define the quasi-norm on the stratified Lie group $\mathbb{G}$.
\begin{definition}\label{d-quasi-norm}
	A continuous function $|\cdot|: \mathbb{G} \rightarrow \mathbb{R}^{+}$ is said to be a homogeneous quasi-norm on a stratified Lie group $\mathbb{G}$ if it satisfies the following conditions:
	\begin{enumerate}[label=(\roman*)]
		
		\item (definiteness): $|x| = 0$ if and only if $x = 0$.
		\item (symmetric): $|x^{-1}| = |x|$ for all $x \in\mathbb{G}$, and 
		\item ($1$-homogeneous): $|\lambda x| = \lambda |x|$ for all $x \in\mathbb{G}$ and $\lambda>0$.
		
	\end{enumerate}
\end{definition}

An example of a quasi-norm on $\mathbb{G}$ is the norm defined as $d(x):=\rho_{cc}(x, 0),\,\, x\in \mathbb{G}$, where $\rho$ is the Carnot-Carath\'{e}odory distance related to H\"ormander vector fields on $\mathbb{G}.$ It is known that all homogeneous quasi-norms are equivalent on $\mathbb{G}.$ In this paper we will work with a left-invariant homogeneous distance $d(x, y):=|y^{-1} \circ x|$ for all $x, y \in \mathbb{G}$ induced by the homogeneous quasi-norm of $\mathbb{G}.$

The sublaplacian (or Horizontal Laplacian)  on $\mathbb{G}$ is defined as
\begin{equation}\label{d-sub-lap}
	\mathcal{L}:=X_{1}^{2}+\cdots+X_{N_1}^{2}.
\end{equation}
The horizontal gradient on $\mathbb{G}$ is defined as
\begin{equation}\label{d-h-grad}
	\nabla_{\mathbb{G}}:=\left(X_{1}, X_{2}, \cdots, X_{N_1}\right).
\end{equation}
The horizontal divergence on $\mathbb{G}$ is defined by
\begin{equation}\label{d-h-div}
	\operatorname{div}_{\mathbb{G}} v:=\nabla_{\mathbb{G}} \cdot v.
\end{equation}
For $p \in(1,+\infty)$, we define the $p$-sublaplacian on the stratified Lie group $\mathbb{G}$ as
\begin{equation}\label{d-p-sub}
	\Delta_{\mathbb{G},p} u:=\operatorname{div}_{\mathbb{G}}\left(\left|\nabla_{\mathbb{G}} u\right|^{p-2} \nabla_{\mathbb{G}} u\right).
\end{equation}

Let $\Omega$ be a Haar measurable subset of $\mathbb{G}$. Then $\mu(D_{\lambda}(\Omega))=\lambda^{Q}\mu(\Omega)$ where $\mu(\Omega)$ is the Haar measure of $\Omega$. The quasi-ball of radius $r$ centered at $x\in\mathbb{G}$ with respect to the quasi-norm $|\cdot|$ is defined as
\begin{equation}\label{d-ball}
	B(x, r)=\left\{y \in \mathbb{G}: \left|y^{-1} \circ x\right|<r\right\}.
\end{equation}
Observe that $B(x, r)$ can be obtained by the left translation by $x$ of the ball $B(0, r)$. Furthermore, $B(0, r)$ is the image under the dilation $D_{r}$ of $B(0,1)$. Thus, we have $\mu(B(x,r))=r^Q\mu(B(0,1))$ for all $x\in \mathbb{G}$.



We are now in a position to define the notion of fractional Sobolev-Folland-Stein type spaces related to our study.

Let $\Omega \subset {\mathbb{G}}$ be an open subset. Then for $0<s<1< p<\infty$, the fractional Sobolev space $W^{s,p}(\Omega)$ on stratified groups is defined as 

\begin{equation}
	W^{s,p}(\Omega)=\{u\in L^{p}(\Omega): [u]_{s, p,\Omega}<\infty\},
\end{equation}
endowed with the norm 
\begin{equation}
	\|u\|_{W^{s,p}(\Omega)}^p=\|u\|_{L^p(\Omega)}^p+[u]_{s,p,\Omega}^p,
\end{equation}
where $[u]_{s, p,\Omega}$ denotes the Gagliardo semi-norm defined by
\begin{equation}
	[u]_{s, p,\Omega}:=\left(\int_{\Omega} \int_{\Omega} \frac{|u(x)-u(y)|^{p}}{\left|y^{-1} x\right|^{Q+ps}} dxdy\right)^{\frac{1}{p}}<\infty.
\end{equation}

Observe that for all $\phi \in C_c^{\infty}(\Omega)$, we have $[u]_{s, p,\Omega}<\infty$. We define the space  $W_0^{s,p}(\Omega)$ as the closure of $C_c^{\infty}(\Omega)$ with respect to the  norm $\|u\|_{W^{s,p}(\Omega)}$. We would like to point out that $W_0^{s,p}(\mathbb{G})=W^{s,p}(\mathbb{G})$ (see \cite{GKR23}).

Now for an open bounded subset $\Omega \subset {\mathbb{G}}$, define the space $X_0^{s,p}(\Omega)$  as the closure of $C_c^{\infty}(\Omega)$ with respect to the  norm $\|u\|_{L^p(\Omega)}+[u]_{s, p,\mathbb{G}}$. Note that the spaces $X_0^{s,p}(\Omega)$ and $W_0^{s,p}(\Omega)$ are different even in the Euclidean case unless $\Omega$ is an extension domain (see \cite{NPV12}). Note that when $\Omega=\mathbb{G}$, then we have $X_0^{s,p}(\mathbb{G})=W_0^{s,p}(\mathbb{G})=W^{s,p}(\mathbb{G})$. The space $X_0^{s,p}(\Omega)$ is a  reflexive Banach space for $1< p<\infty$.

The space $X_0^{s,p}(\Omega)$ can be defined as a closure of $C_c^{\infty}(\Omega)$ with respect to the homogeneous norm $[u]_{s,p, \mathbb{G}}$. That is $$\|u\|_{X_0^{s,p}(\Omega)}\cong[u]_{s, p,\mathbb{G}}~\text{for all}~u\in X_0^{s,p}(\Omega).$$
	
	Moreover, the construction of the space $X_0^{s,p}(\Omega)$ suggests that if $\Omega$ is bounded then we can represent $X_0^{s,p}(\Omega)$ as follows:  $$X_0^{s,p}(\Omega)=\{u\in W^{s, p}({\mathbb{G}}): u=0~\text{in}~{\mathbb{G}}\setminus\Omega \}.$$ 
For $u\in X_0^{s,p}(\Omega)$,
\begin{align*}
    [u]_{s, p,\mathbb{G}}^p&=\iint_{\mathbb{G} \times \mathbb{G}} \frac{|u(x)-u(y)|^{p}}{\left|y^{-1} x\right|^{Q+p s}} dxdy=\iint_{\mathbb{G} \times \mathbb{G}\setminus (\Omega^c\times\Omega^c)} \frac{|u(x)-u(y)|^{p}}{\left|y^{-1} x\right|^{Q+p s}} dxdy.
\end{align*}


We conclude this section with the following two definitions. For $s\in(0,1)$ and $p\in(1,\infty)$, we define the fractional $p$-sublaplacian  as 
\begin{equation}
	\left(-\Delta_{p,{\mathbb{G}}}\right)^s u(x):=C_{Q,s, p}\,\,  P.V. \int_{{\mathbb{G}} } \frac{|u(x)-u(y)|^{p-2}(u(x)-u(y))}{\left|y^{-1} x\right|^{Q+p s}} dy, \quad x \in {\mathbb{G}}.
\end{equation}

For any $\varphi\in X_0^{s,p}(\Omega)$, we have

\begin{equation}
	\langle\left(-\Delta_{p,{\mathbb{G}}}\right)^s u,\varphi\rangle= \iint_{\mathbb{G} \times \mathbb{G}} \frac{|u(x)-u(y)|^{p-2}(u(x)-u(y))(\varphi(x)-\varphi(y))}{\left|y^{-1} x\right|^{Q+p s}} dxdy.
\end{equation}
Let us recall the following theorem on the subelliptic fractional embedding results proved in \cite{GKR23}.
\begin{theorem}\cite[Theorem 1.1]{GKR23} \label{l-3} Let $\mathbb{G}$ be a stratified Lie group of homogeneous dimension $Q$, and let $\Omega\subset\mathbb{G}$ be an open set.
Let $0<s<1\leq  p<\infty$ and $Q>sp.$  Then the fractional Sobolev space $X_0^{s, p}(\Omega)$ is continuously embedded in $L^r(\Omega)$ for $p\leq r\leq p_s^*:=\frac{Qp}{Q-sp}$, that is, there exists a positive constant $C=C(Q,s,p, \Omega)$ such that for all $u\in X_0^{s, p}(\Omega)$, we have
\begin{equation}\label{localemb}
    \|u\|_{L^r(\Omega)}\leq C \|u\|_{X_0^{s,p}(\Omega)}
\end{equation}
In particular, we have 
\begin{equation} \label{emG}
    \|u\|_{L^r(\mathbb{G})}\leq C_{Q, s, p} \|u\|_{W^{s,p}(\mathbb{G})}.
\end{equation}

Moreover, if $\Omega$ is bounded, then the following embedding
\begin{align} \label{compactemG}
    X_0^{s,p}(\Omega) \hookrightarrow L^r(\Omega)
  \end{align}
is continuous for all $r\in[1,p_s^*]$ and is compact for all $r\in[1,p_s^*)$.
\end{theorem}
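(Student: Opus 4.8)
The final statement is the subelliptic fractional Sobolev embedding of \cite{GKR23} together with a Rellich--Kondrachov type compactness, and the two assertions are essentially independent, so I would treat them separately. The plan is to reduce the whole continuous-embedding part to the single critical inequality
$$\|u\|_{L^{p_s^*}(\mathbb{G})} \le C\,[u]_{s,p,\mathbb{G}}, \qquad u \in C_c^\infty(\mathbb{G}),$$
and then recover the full range by interpolation. Once this holds, density of $C_c^\infty$ extends it to $X_0^{s,p}(\Omega)$, whose elements extend by zero to $W^{s,p}(\mathbb{G})$ with $\|u\|_{X_0^{s,p}(\Omega)} \cong [u]_{s,p,\mathbb{G}}$; for $p \le r < p_s^*$ I would use H\"older on a bounded $\Omega$, and for the estimate \eqref{emG} on all of $\mathbb{G}$ the interpolation $\|u\|_{L^r} \le \|u\|_{L^p}^{1-\theta}\|u\|_{L^{p_s^*}}^{\theta}$ with $\frac1r = \frac{1-\theta}{p} + \frac{\theta}{p_s^*}$, both right-hand factors being controlled by $\|u\|_{W^{s,p}(\mathbb{G})}$.

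To prove the critical inequality itself I would adapt the level-set/dyadic argument of Di Nezza, Palatucci and Valdinoci \cite{NPV12} to the group. The only structural facts that argument uses are that $\mathbb{G}$ is doubling with the exact volume scaling $\mu(B(x,r)) = r^Q \mu(B(0,1))$ and that the kernel $|y^{-1}x|^{-(Q+sp)}$ is left-invariant and homogeneous of the correct degree under the dilations $D_\lambda$. Concretely, writing $A_k = \{|u| > 2^k\}$ and $a_k = \mu(A_k)$, one estimates $\|u\|_{L^{p_s^*}}^{p_s^*} \lesssim \sum_k 2^{k p_s^*} a_k$, and then bounds each $a_k$ by splitting the Gagliardo double integral over the product of superlevel and sublevel sets $\{|u| > 2^{k+1}\} \times \{|u| < 2^{k-1}\}$; a H\"older step combined with the homogeneity of $\mu$ produces exactly the exponent $p_s^*$ after summing the resulting telescoping series.

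For the compact embedding with $\Omega$ bounded and $1 \le r < p_s^*$ I would follow the fractional Rellich theorem \cite{NPV12}. Given a bounded sequence $(u_k) \subset X_0^{s,p}(\Omega)$, reflexivity (here $1 < p < \infty$) yields a weakly convergent subsequence $u_k \rightharpoonup u$, and the continuous embedding makes $(u_k)$ bounded in $L^{p_s^*}(\Omega)$. It then suffices to prove strong convergence in $L^p(\Omega)$, since $\|u_k - u\|_{L^r} \le \|u_k - u\|_{L^p}^{1-\theta}\|u_k - u\|_{L^{p_s^*}}^{\theta}$ and the last factor stays bounded, upgrading strong $L^p$ convergence to every $r < p_s^*$. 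Strong $L^p$ convergence I would obtain from a Fr\'echet--Kolmogorov/averaging scheme: a local fractional Poincar\'e estimate of the form $\int_\Omega |u - u_{B(x,\rho)}|^p\,dx \le C\rho^{sp}[u]_{s,p,\mathbb{G}}^p$ shows that the ball-averages approximate each $u_k$ uniformly in $k$, while the averaging operator is compact on a bounded subset of $L^p$; passing to the weak limit and then letting $\rho \to 0$ closes the argument.

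The main obstacle is twofold and lives entirely in the geometry of $\mathbb{G}$. First, both the critical Sobolev inequality and the local fractional Poincar\'e estimate must be carried out using only homogeneity and doubling, and here the fact that $|\cdot|$ is merely a quasi-norm---its triangle inequality holding only up to a multiplicative constant---has to be tracked through every ball-inclusion and covering step, since the Euclidean arguments silently exploit a genuine triangle inequality. Second, in the compactness part the crux is the uniform-in-$k$ approximation of $u_k$ by its local averages; obtaining the fractional Poincar\'e constant independent of the center $x$ and of $k$, and verifying equicontinuity of the averaging family in the group metric, is the step I expect to demand the most care.
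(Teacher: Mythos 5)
This theorem is not proved in the present paper at all: it is imported verbatim from \cite{GKR23}, and the critical inequality you reduce everything to is precisely what the paper quotes separately from \cite[Theorem 2]{KD20} as \eqref{sobo-ineq}. So there is no in-paper proof to compare against; the comparison is with those references, and your roadmap matches their methodology — prove the endpoint inequality $\|u\|_{L^{p_s^*}(\mathbb{G})} \le C\,[u]_{s,p,\mathbb{G}}$ by adapting the level-set/dyadic argument of \cite{NPV12}, using only the exact volume scaling $\mu(B(x,r)) = r^Q \mu(B(0,1))$, left invariance, and homogeneity of the kernel (with the quasi-norm constant $C_0$ tracked through the covering steps, as you correctly flag), then obtain the subcritical range by interpolation and the compactness on bounded domains by an averaging/Fr\'echet--Kolmogorov scheme. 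In outline this is the right, and essentially the standard, route.

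Two repairs are needed. First, the theorem permits $p=1$ (the hypothesis is $0<s<1\le p<\infty$), but your compactness argument opens with ``reflexivity (here $1<p<\infty$) yields a weakly convergent subsequence,'' which is unavailable for $p=1$. This is fixable without new ideas: the averaging scheme you describe proves total boundedness of the image in $L^p(\Omega)$ directly — the ball-averaging operators map bounded sets of $X_0^{s,p}(\Omega)$ into precompact subsets of $L^p(\Omega)$ on a bounded $\Omega$, and the fractional Poincar\'e estimate makes them approximate the identity uniformly on the bounded set — so precompactness follows for every $1\le p<\infty$ with no weak limit ever taken, and your interpolation step then upgrades it to all $r\in[1,p_s^*)$. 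Second, the continuous embedding \eqref{localemb} is asserted for an arbitrary open $\Omega$, not only bounded ones, so your suggestion to ``use H\"older on a bounded $\Omega$'' for $p\le r<p_s^*$ does not cover the general case; the $L^p$--$L^{p_s^*}$ interpolation you already invoke for \eqref{emG} is the correct tool there too, since the $X_0^{s,p}(\Omega)$ norm controls both endpoint norms after extension by zero. With these adjustments the proposal is sound, modulo the deferred technical work (the group version of the dyadic estimate and the fractional Poincar\'e inequality), which is exactly the content of the cited papers.
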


\section{A subelliptic version of Lions vanishing lemma for $W_0^{s, p}(\G)$} \label{secc3}
In this section, we prove  the subelliptic vanishing lemma established for classical Sobolev space by Lions \cite{Li85I,Li85II} in his seminal papers. 
Recently, this lemma was obtained by Wang \cite{wang23} in the setting of the Heisenberg group. We follow the proof presented in a recent paper \cite{zhang} dealing with the Euclidean space scenario. 
\begin{lemma} \label{vanishinglemma}
Let $\G$ be a stratified Lie group with homogeneous dimension $Q.$
    Let $s \in (0, 1), p \in (1, \infty),$ and let $q$ be such that  $p\leq q<p_s^*=\frac{Qp}{Q-ps}.$ Let $(u_k)$ be a bounded sequence in $W^{s,p}_0(\G)$ with the property 
    \begin{equation} \label{hy4.1}
        \liminf_{k \rightarrow \infty} \sup_{x \in \G} \int_{B(x, 1)} |u_k|^q dy =0.
    \end{equation}
     Then there exists a subsequence, again denoted by $(u_k),$ such that $u_k \rightarrow 0$ in $L^r(\G)$ for $r \in (q, p_s^*).$
\end{lemma}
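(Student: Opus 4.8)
The plan is to establish an interpolation-type estimate that controls the $L^r$ norm on the whole group by a product involving the local $L^q$ quantity appearing in the hypothesis \eqref{hy4.1} and the uniformly bounded Gagliardo energy, and then to send this to zero. First I would fix a covering of $\G$ by a family of balls $\{B(x_i, 1)\}_i$ of radius one with bounded overlap, i.e. such that each point of $\G$ lies in at most a fixed finite number $\theta$ of the balls; the existence of such a covering follows from the Vitali-type covering properties of the homogeneous space $(\G, d)$ together with the scaling $\mu(B(x,r)) = r^Q \mu(B(0,1))$. This is the subelliptic substitute for the standard lattice decomposition of $\mathbb{R}^N$, and it is where left-invariance of the distance $d(x,y) = |y^{-1}\circ x|$ is used.

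Next, on each ball $B_i := B(x_i, 1)$ I would apply the local Sobolev embedding $X_0^{s,p}(B_i) \hookrightarrow L^{p_s^*}(B_i)$ from Theorem \ref{l-3}, or rather a Gagliardo-Nirenberg-type interpolation on the ball, to bound the $L^r(B_i)$ norm, for $r \in (q, p_s^*)$, by a product of the local $L^q(B_i)$ norm and the local fractional energy $[u_k]_{s,p,B_i}$ together with $\|u_k\|_{L^p(B_i)}$. Writing $r = \lambda q + (1-\lambda) p_s^*$ and using H\"older followed by the embedding, the key pointwise-on-$i$ inequality takes the schematic form
\begin{equation*}
\int_{B_i} |u_k|^r \, dy \leq C \left( \int_{B_i} |u_k|^q \, dy \right)^{\alpha} \left( [u_k]_{s,p,B_i}^p + \|u_k\|_{L^p(B_i)}^p \right)^{\beta},
\end{equation*}
for suitable exponents $\alpha > 0$ and $\beta$ determined by $r,q,p,p_s^*,Q$. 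To make this honest on a single ball I would need the local version of the embedding, so I would control $[u_k]_{s,p,B_i}$ by the global Gagliardo seminorm restricted to interactions involving $B_i$, which is the subtle point discussed below.

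Then I would sum over $i$. Choosing $\beta \le 1$ so that the factor carrying the energy can be pulled out by its supremum over $i$ (or summed using $\ell^1 \hookrightarrow \ell^{1/\beta}$ subadditivity), the bounded-overlap property turns $\sum_i \int_{B_i}|u_k|^r$ into a controlled multiple of $\int_\G |u_k|^r$, and likewise $\sum_i$ of the local energies into a constant multiple of the global energy $[u_k]_{s,p,\G}^p + \|u_k\|_{L^p(\G)}^p$, which is bounded uniformly in $k$ by hypothesis. The outcome is an estimate of the form
\begin{equation*}
\|u_k\|_{L^r(\G)}^r \leq C \left( \sup_{x \in \G} \int_{B(x,1)} |u_k|^q \, dy \right)^{\alpha} \, \|u_k\|_{W^{s,p}(\G)}^{p\beta / \cdots},
\end{equation*}
after which passing to the subsequence realizing the $\liminf$ in \eqref{hy4.1} and using uniform boundedness in $W_0^{s,p}(\G)$ forces $\|u_k\|_{L^r(\G)} \to 0$.

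The main obstacle I expect is the transition from local to global in the nonlocal, fractional setting: unlike the local gradient energy, the Gagliardo seminorm over a ball $[u_k]_{s,p,B_i}$ is genuinely nonlocal and the naive sum $\sum_i [u_k]_{s,p,B_i}^p$ does \emph{not} reconstruct $[u_k]_{s,p,\G}^p$ because long-range interactions between distinct balls are lost. The honest fix is to enlarge the balls (work with $B(x_i, 2)$ in the energy while keeping $B(x_i,1)$ for the $L^q$ mass) and to split the double integral defining the local seminorm into a near-diagonal part, controlled by the global seminorm via bounded overlap, and a far part where $|y^{-1}x| \ge 1$, which is absorbed into $\|u_k\|_{L^p}^p$ since $\int_{|y^{-1}x|\ge 1} |y^{-1}x|^{-(Q+ps)}\,dy < \infty$. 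Handling this decomposition carefully — so that the summed contributions remain $\lesssim [u_k]_{s,p,\G}^p + \|u_k\|_{L^p(\G)}^p$ with a constant depending only on $Q,s,p$ and the overlap number $\theta$ — is the technical heart of the argument.
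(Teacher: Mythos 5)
Your proposal follows essentially the same route as the paper's proof: H\"older interpolation between $L^q$ and $L^{p_s^*}$ on unit balls, the local Sobolev embedding of Theorem \ref{l-3}, a bounded-overlap covering $\{B(x_i,1)\}$ of $\G$ (the paper invokes \cite[Lemma 5.7.5]{FR16}), summation of the ball-wise estimates, and then the hypothesis \eqref{hy4.1} together with boundedness in $W^{s,p}_0(\G)$. The local-to-global subtlety you flag for the nonlocal seminorm is a genuine point; the paper handles the summation by fixing the exponents so that the local energies enter to the power exactly $p$ before summing them against the global norm, so your more explicit near/far decomposition only makes the same argument more careful.
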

\begin{proof}  Let $(u_k)$ be a sequence in  $ W^{s, p}_0(\G)$ and $q \in [p, p_s^*).$ We will first prove that 
\begin{align} \label{lab4.2}
    \int_{\G} |u_k|^r dy  \leq C \Big( \sup_{x \in \G} \int_{B(x, 1)} |u_k|^q dy \Big)^{\frac{p_s^*-r}{p_s^*-q}} \|u_k\|^p_{W^{s, p}_0(\G)}
    \end{align} for any $r \in (q, p_s^*)$ and for all $u_k \in W^{s, p}_0(\G).$

For any $r \in (q, p_s^*)$ we write $\frac{1}{r}=\frac{\beta}{q}+\frac{1-\beta}{p_s^*}$ for some $\beta \in (0, 1).$ Then, using the H\"older inequality we have 
$$\|u_k\|_{L^r(B(x, 1))}^r \leq \|u_k\|_{L^q(B(x, 1))}^{q \beta  } \|u_k\|_{L^{p_s^*}(B(x, 1))}^{(1-\beta)p_s^* }. $$
Now, by choosing $\beta= \frac{p_s^*-r}{p_s^*-q} \in (0, 1)$  and using the local Sobolev embedding \eqref{localemb} we get
\begin{align*}
    \int_{B(x, 1)} |u_k|^r dy &\leq \Bigg( \int_{B(x, 1)} |u_k|^q dy \Bigg)^{\frac{p_s^*-r}{p_s^*-q}} \Bigg( \int_{B(x, 1)} |u_k|^{p_s^*} dy \Bigg)^{\frac{r-q}{p_s^*-q}}  \\&
    \leq C \Bigg( \int_{B(x, 1)} |u_k|^q dy \Bigg)^{\frac{p_s^*-r}{p_s^*-q}} \|u_k\|_{X^{s, p}_0(B(x, 1))}^{p_s^*\frac{r-q}{p_s^*-q}}.
\end{align*}
Next, for this choice of $\beta$ we have $r = \frac{p(p_s^*-q)+p_s^*q}{p_s^*}$ so that  $\frac{p_s^*}{p}\frac{r-q}{p_s^*-q}=p.$ Thus we get
\begin{align}
    \int_{B(x, 1)} |u_k|^r dy \leq C  \Bigg( \int_{B(x, 1)} |u_k|^q dy \Bigg)^{\frac{p_s^*-r}{p_s^*-q}} \|u_k\|_{X^{s, p}_0(B(x, 1))}^{p}.
\end{align}
   Using the covering lemma \cite[Lemma 5.7.5]{FR16}, we can choose a covering $\{B(x_i, 1)\}_{i=1}^\infty$ of $\G$ such that no point of $\G$ belongs to more than $(4C_0^2)^Q$ such balls, where $C_0$ is the constant appearing in the triangle inequality of the quasi-norm $|\cdot|$ on $\G$.
Therefore, we have
\begin{align}
    \int_{\G} |u_k|^r dy &\leq \int_{\cup_{i} B(x_i, 1)} |u_k|^r dy  \nonumber  \leq 
    \sum_{i} \int_{B(x_i, 1)} |u_k|^r dy  \nonumber \\& \leq C \sum_{i} \Bigg( \int_{B(x_i, 1)} |u_k|^q dy \Bigg)^{\frac{p_s^*-r}{p_s^*-q}} \|u_k\|_{X^{s, p}_0(B(x_i, 1))}^{p} \nonumber \\&\leq C (4C_0)^Q \Bigg( \sup_{x \in \G} \int_{B(x, 1)} |u_k|^q dy \Bigg)^{\frac{p_s^*-r}{p_s^*-q}} \sum_{i} \|u_k\|_{X^{s, p}_0(B(x_i, 1))}^{p} \nonumber \\&\leq C (4C_0)^Q\Big( \sup_{x \in \G} \int_{B(x, 1)} |u_k|^q dy\Big)^{\frac{p_s^*-r}{p_s^*-q}} \|u_k\|^p_{W^{s, p}_0(\G)},
\end{align} 
proving \eqref{lab4.2}.
Thus, applying the hypothesis \eqref{hy4.1} in \eqref{lab4.2} we conclude that 

$$\liminf_{k \rightarrow \infty} \int_{\G} |u_k|^rdy =0,$$
for $r \in (q, p_s^*).$ This implies that $u_k \rightarrow 0$ in $L^r(\G)$  for $r \in (q, p_s^*).$ \end{proof}


\section{Existence of least energy solutions for \eqref{r1.71}}\label{sec3}

This section is devoted to proving Theorem \ref{rt1.2i}. Throughout this section,  we always assume that $p<q<p_s^*$. For the convenience of readers, let us recall  the subelliptic  fractional stationary Schr\"odinger equation \eqref{r1.71} which is the main object of study in Theorem \ref{rt1.2i}:

\begin{equation}\label{r1.7}
	\left(-\Delta_{p, \mathbb{G}}\right)^s u+|u|^{p-2} u = |u|^{q-2} u, \quad u \in W_{0}^{s,p}(\mathbb{G}),
\end{equation}
where $p<q<p_s^*:=\frac{Qp}{Q-ps}$. 
Next, we will discuss several definitions which would be crucial for the proof of Theorem \ref{rt1.2i} below. We begin by defining the notion of a weak solution for the nonlocal equation \eqref{r1.7}.

\begin{definition}
    We say that $u \in W_{0}^{s,p}(\mathbb{G})$ is a weak solution of \eqref{r1.7} if and only if for all $\phi \in W_{0}^{s,p}( \mathbb{G})$, we have

\begin{equation}\label{r1.8}
	\int_{\mathbb{G}}\int_{\mathbb{G}}\frac{|u(x)-u(y)|^{p-2}(u(x)-u(y))(\phi(x)-\phi(y)}{|y^{-1}x|^{Q+ps}}dxdy +  \int_{ \mathbb{G}}\left(|u|^{p-2}u\phi-|u|^{q-2} u \phi\right) dx=0.
\end{equation}
\end{definition}

The  energy functional $I: W^{s,p}_0(\G) \rightarrow \mathbb{R}$ associated to the problem \eqref{r1.7}  is defined by 

\begin{equation*}
	I(u):=\frac{1}{p} \iint_{\G \times \G} \frac{|u(x)-u(y)|^p}{|y^{-1} x|^{Q+ps}} dx dy +\frac{1}{p} \int_{\G} |u(x)|^{p} dx -\frac{1}{q} \int_{ \G}|u(x)|^{q} dx
\end{equation*}

such that
\begin{align*}
   \langle I'(u),\phi \rangle&= \int_{\mathbb{G}}\int_{\mathbb{G}}\frac{|u(x)-u(y)|^{p-2}(u(x)-u(y))(\phi(x)-\phi(y)}{|y^{-1}x|^{Q+ps}}dxdy \\&\quad\quad\quad+  \int_{ \mathbb{G}}\left(|u|^{p-2}u\phi-|u|^{q-2} u \phi\right) dx,
\end{align*}
for every $\phi \in W^{s,p}_0(\G).$ In particular, by setting $\mathcal{L}(u):=\langle I'(u), u \rangle,$ we get  
$$\mathcal{L}(u):=\int_{\mathbb{G}}\int_{\mathbb{G}}\frac{|u(x)-u(y)|^{p}}{|y^{-1}x|^{Q+ps}}dxdy +  \int_{ \mathbb{G}}\left(|u|^{p}-|u|^{q} \right) dx. $$

Denote the Nehari manifold by 
\begin{equation}\label{neh}
    \mathcal{N}:=\left\{u \in W_{0}^{s,p}(\G) \setminus\{0\}: \mathcal{L}(u)=\langle I'(u),u\rangle=0\right\}
\end{equation}
and set
\begin{equation}\label{r1.9}
	d:=\inf \{I(u): u \in \mathcal{N}\} .
\end{equation}
Since it is clear that the weak solutions of \eqref{r1.7} are the critical points of the energy functional $I$, we define the following set:
\begin{definition}\label{rd1.1}
	Let $\Gamma$ be the set of weak solutions of \eqref{r1.7}, namely, we define	
	\begin{equation*}
		\Gamma:=\left\{\phi \in W_{0}^{s,p}( \G): I'(\phi)=0 \text { and } \phi \neq 0\right\}.
	\end{equation*}
	
	Let $\mathcal{G}$ be the set of least energy solutions of \eqref{r1.7}, that is,
	\begin{equation*}
		\mathcal{G}:=\{u \in \Gamma: I(u) \leq I(v) \text { for any } v \in \Gamma\}.
	\end{equation*}
\end{definition}

 The following supporting lemmas will be useful in what follows.

\begin{lemma}\label{rl2.1}
	For any $u \in W_{0}^{s,p}( \G) \setminus\{0\}$, there is a unique $\theta_{u}>0$ such that $\theta_{u} u \in$ $\mathcal{N}$. Moreover, if $\mathcal{L}(u)<0$, then $0<\theta_{u}<1$.
\end{lemma}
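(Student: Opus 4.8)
The plan is to reduce the problem to a one-variable analysis along the ray $\theta \mapsto \theta u$, exploiting the fact that the two ``kinetic'' contributions to $\mathcal{L}$ are positively homogeneous of degree $p$ while the nonlinear term is homogeneous of degree $q$. Concretely, for a fixed $u \in W_0^{s,p}(\G)\setminus\{0\}$ I would set
$$A:=\iint_{\G\times\G}\frac{|u(x)-u(y)|^p}{|y^{-1}x|^{Q+ps}}\,dx\,dy+\int_\G|u(x)|^p\,dx,\qquad B:=\int_\G|u(x)|^q\,dx,$$
and note that both are strictly positive: $A>0$ because $u\neq 0$ forces $\|u\|_{L^p(\G)}>0$, and $B>0$ because $u\in L^q(\G)$ (by the embedding in Theorem \ref{l-3}, valid since $p\leq q<p_s^*$) is nonzero on a set of positive measure. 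By the degree-$p$ and degree-$q$ homogeneity one obtains the exact identity $\mathcal{L}(\theta u)=\theta^p A-\theta^q B$ for every $\theta>0$.

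First I would solve $\mathcal{L}(\theta u)=0$. Factoring $\mathcal{L}(\theta u)=\theta^p\bigl(A-\theta^{q-p}B\bigr)$ and using $\theta^p>0$, the condition $\theta u\in\mathcal{N}$ becomes equivalent to $A=\theta^{q-p}B$, whose only positive solution, since $q-p>0$, is
$$\theta_u=\left(\frac{A}{B}\right)^{\frac{1}{q-p}}>0.$$
This yields existence and uniqueness of $\theta_u$ simultaneously. Equivalently, one may argue via the fibering map $\psi(\theta):=I(\theta u)=\frac{\theta^p}{p}A-\frac{\theta^q}{q}B$, whose derivative $\psi'(\theta)=\theta^{-1}\mathcal{L}(\theta u)$ has a unique positive zero, so that $\psi$ possesses a single critical point on $(0,\infty)$; this is the point of view I would keep in mind for the later sections where the maximality of $I$ along the ray is invoked.

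Finally, the sign statement is immediate from the explicit formula: the hypothesis $\mathcal{L}(u)<0$ reads $A-B<0$, i.e.\ $A/B<1$, and since the exponent $\frac{1}{q-p}$ is positive we conclude $0<\theta_u=(A/B)^{1/(q-p)}<1$. The ``main obstacle'' here is cosmetic rather than substantive: the only points requiring genuine care are the strict positivity of $B$ (which rests on the subcritical embedding $W_0^{s,p}(\G)\hookrightarrow L^q(\G)$ of Theorem \ref{l-3}) and the clean separation of the two homogeneity degrees, after which everything collapses to solving a single power equation. I therefore expect no serious difficulty, with no tools beyond Theorem \ref{l-3} and elementary algebra required.
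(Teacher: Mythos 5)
Your proposal is correct and follows essentially the same route as the paper: the paper also defines $\theta_u=\|u\|_{W_0^{s,p}(\G)}^{\frac{p}{q-p}}\|u\|_{L^q(\G)}^{-\frac{q}{q-p}}$, which is exactly your $(A/B)^{1/(q-p)}$, and deduces $0<\theta_u<1$ from $\mathcal{L}(u)<0$ via $\|u\|_{W_0^{s,p}(\G)}^p<\|u\|_{L^q(\G)}^q$. Your write-up merely makes explicit the homogeneity computation and the positivity of $B$ (via Theorem \ref{l-3}) that the paper leaves as ``a simple calculation''.
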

\begin{proof}
	For any arbitrary $u \in W_{0}^{s,p}( \G) \setminus\{0\}$ and 
	\begin{equation*}
		\theta_{u}=\|u\|_{W_{0}^{s,p}( \G)}^{\frac{p}{q-p}}\|u\|_{L^{q}(\G)}^{-\frac{q}{q-p}},
	\end{equation*}
 a simple calculation yields that
	 $\theta_{u} u \in \mathcal{N}$. It is easy to see that this $\theta_{u}$ is unique. From above and the expression of $\mathcal{L}(u)=\|u\|_{W_{0}^{s,p}( \G)}^{p}-\|u\|_{L^{q}(\G)}^{q}$, we know that if $\mathcal{L}(u)<0$, that is, $\|u\|_{W_{0}^{s,p}( \G)}^{p}<\|u\|_{L^{q}(\G)}^{q}$, then $0<\theta_{u}<1$.
\end{proof} 

\begin{lemma}\label{rl2.2}
	There is $\rho>0$ such that for all $u \in \mathcal{N}$ we have $\|u\|_{W_{0}^{s,p}( \G)} \geq \rho$.
\end{lemma}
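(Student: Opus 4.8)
The plan is to convert the Nehari-manifold constraint into the identity $\|u\|_{W_0^{s,p}(\G)}^p = \|u\|_{L^q(\G)}^q$ and then play it off against the continuous Sobolev embedding from Theorem \ref{l-3}. First I would record the explicit form of $\mathcal{L}$ already computed in the proof of Lemma \ref{rl2.1}: since
\[
\mathcal{L}(u)=[u]_{s,p,\G}^p+\|u\|_{L^p(\G)}^p-\|u\|_{L^q(\G)}^q
=\|u\|_{W_0^{s,p}(\G)}^p-\|u\|_{L^q(\G)}^q,
\]
the defining condition $u\in\mathcal{N}$, namely $\mathcal{L}(u)=0$, is precisely
\[
\|u\|_{W_0^{s,p}(\G)}^p=\|u\|_{L^q(\G)}^q.
\]

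Next I would invoke the continuous embedding $W^{s,p}(\G)\hookrightarrow L^q(\G)$, valid for $p\le q\le p_s^*$ by \eqref{emG} in Theorem \ref{l-3}; recalling $W_0^{s,p}(\G)=W^{s,p}(\G)$, this furnishes a constant $C=C_{Q,s,p}>0$, independent of $u$, with $\|u\|_{L^q(\G)}\le C\,\|u\|_{W_0^{s,p}(\G)}$. Substituting this into the Nehari identity, and noting that every $u\in\mathcal{N}$ is nonzero so $\|u\|_{W_0^{s,p}(\G)}>0$, I would get
\[
\|u\|_{W_0^{s,p}(\G)}^p=\|u\|_{L^q(\G)}^q\le C^q\,\|u\|_{W_0^{s,p}(\G)}^q.
\]
Dividing both sides by $\|u\|_{W_0^{s,p}(\G)}^p$ and using that $q>p$ strictly (so the exponent $q-p$ is positive) yields $\|u\|_{W_0^{s,p}(\G)}^{q-p}\ge C^{-q}$, hence
\[
\|u\|_{W_0^{s,p}(\G)}\ge C^{-\frac{q}{q-p}}=:\rho>0,
\]
with $\rho$ uniform over $u\in\mathcal{N}$, as required.

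There is no serious obstacle here; the argument is the standard Nehari lower-bound estimate. The only two points that genuinely matter are that the embedding constant in \eqref{emG} is a global one on $\G$ (hence independent of the particular $u$, giving a uniform $\rho$), and that the inequality $q>p$ is strict, which is exactly what makes the exponent $q-p>0$ and prevents the bound from degenerating. Both are guaranteed by the standing hypothesis $p<q<p_s^*$.
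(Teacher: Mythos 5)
Your proof is correct and takes essentially the same route as the paper: both convert the Nehari constraint into the identity $\|u\|_{W_0^{s,p}(\G)}^p=\|u\|_{L^q(\G)}^q$ and then play it against the uniform embedding constant of Theorem \ref{l-3} to get $\|u\|_{L^q(\G)}^q\leq C^q\|u\|_{W_0^{s,p}(\G)}^q$, from which dividing by $\|u\|_{W_0^{s,p}(\G)}^p$ (using $q>p$ and $u\neq 0$) gives the uniform lower bound. The only cosmetic difference is that the paper first routes the estimate through the Gagliardo--Nirenberg inequality \eqref{r1.6} before collapsing to the same bound, whereas you invoke the Sobolev embedding \eqref{emG} directly, which is slightly more streamlined.
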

\begin{proof}
Since $u \in \mathcal{N},$ we have $\|u\|_{W_{0}^{s,p}( \G)}^{p}=\|u\|_{L^{q}(\G)}^{q}.$ Therefore,  from the subelliptic fractional Gagliardo-Nirenberg inequality \eqref{r1.6} and subelliptic fractional Sobolev inequality \eqref{emG} we obtain
\begin{align*}
    \|u\|_{W_{0}^{s,p}( \G)}^{p}&=\|u\|_{L^{q}(\G)}^{q} \leq C\left(\int_{\mathbb{G}}\int_{\mathbb{G}}\frac{|u(x)-u(y)|^{p}}{\left|y^{-1} x\right|^{Q+ps}} dxdy\right)^{\frac{Q(q-p)}{sp^2}}\left(\int_{ \mathbb{G}}|u|^{p} dx\right)^{\frac{spq-Q(q-p)}{sp^2}}
 \\&\leq \|u\|_{W^{s,p}_0(\G)}^{\frac{Q(q-p)}{sp}} \|u\|_{L^p(\G)}^{\frac{spq-Q(q-p)}{sp}} \leq C_{Q, s, p}\|u\|_{W^{s,p}_0(\G)}^{\frac{Q(q-p)}{sp}} \|u\|_{W^{s,p}_0(\G)}^{\frac{spq-Q(q-p)}{sp}}= C_{Q, s, p} \|u\|_{W^{s,p}_0(\G)}^{q}, 
\end{align*}
	which implies that $\|u\|_{W^{s,p}_0(\G)}^{p-q} \geq C_{Q, s, p}^{-1}$. Setting $\rho=C_{Q, s, p}^{-\frac{1}{p-q}}>0$, we get that $\|u\|_{W^{s,p}_0(\G)} \geq \rho$ for all $u \in \mathcal{N}$.
\end{proof}

\begin{lemma}\label{rl2.4}
	Suppose that $v \in \mathcal{N}$ and $I(v)=d:=\inf \{I(u): u \in \mathcal{N}\}.$ Then $v$ is a least energy solution of \eqref{r1.7}.
\end{lemma}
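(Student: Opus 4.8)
The plan is to treat $v$ as a minimizer of the $C^1$ functional $I$ subject to the constraint defining the Nehari manifold $\mathcal{N} = \{u \neq 0 : \mathcal{L}(u) = 0\}$, extract the Euler--Lagrange equation via a Lagrange multiplier, and then show that the multiplier vanishes. First I would record that $\mathcal{L} = \langle I'(\cdot), \cdot\rangle$ is itself of class $C^1$ on $W_0^{s,p}(\G)$ and compute its derivative along $v$. Writing $A(u) := \|u\|_{W_0^{s,p}(\G)}^p$ and $B(u) := \|u\|_{L^q(\G)}^q$, so that $\mathcal{L}(u) = A(u) - B(u)$ and $\langle \mathcal{L}'(u), u\rangle = pA(u) - qB(u)$, the Nehari identity $A(v) = B(v)$ together with $v \neq 0$ (Lemma \ref{rl2.2}) gives
\[
\langle \mathcal{L}'(v), v\rangle = (p - q)A(v) = (p-q)\|v\|_{W_0^{s,p}(\G)}^p < 0,
\]
since $p < q$. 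In particular $\mathcal{L}'(v) \neq 0$, so $\mathcal{N}$ is a $C^1$ manifold of codimension one near $v$ and the Lagrange multiplier theorem applies to the constrained minimum $I(v) = d$.

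Next, the multiplier rule yields some $\mu \in \mathbb{R}$ with $I'(v) = \mu \mathcal{L}'(v)$. Pairing both sides with $v$ and using $\langle I'(v), v\rangle = \mathcal{L}(v) = 0$, which holds precisely because $v \in \mathcal{N}$, forces $\mu \langle \mathcal{L}'(v), v\rangle = 0$; since $\langle \mathcal{L}'(v), v\rangle \neq 0$ by the previous step, I conclude $\mu = 0$ and hence $I'(v) = 0$. Thus $v$ is a nontrivial critical point of $I$, i.e. a weak solution of \eqref{r1.7}, so $v \in \Gamma$.

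Finally, to upgrade $v$ from a solution to a \emph{least energy} solution I would observe that $\Gamma \subseteq \mathcal{N}$: any $w \in \Gamma$ satisfies $\langle I'(w), w\rangle = 0$, which is exactly the statement $\mathcal{L}(w) = 0$ with $w \neq 0$, so $w \in \mathcal{N}$. Therefore $I(v) = d = \inf_{u \in \mathcal{N}} I(u) \leq I(w)$ for every $w \in \Gamma$, which is precisely the defining property of $v \in \mathcal{G}$, completing the proof.

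I expect the main obstacle to be the rigorous application of the Lagrange multiplier rule in the Banach-space setting: establishing the $C^1$ regularity of $I$ and $\mathcal{L}$ on $W_0^{s,p}(\G)$ and, crucially, the nondegeneracy $\mathcal{L}'(v) \neq 0$ that makes $\mathcal{N}$ a genuine $C^1$ constraint manifold near $v$. Once this structural point is secured, the vanishing of the multiplier and the inclusion $\Gamma \subseteq \mathcal{N}$ are short computations.
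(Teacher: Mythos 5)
Your proposal is correct and follows essentially the same route as the paper: apply the Lagrange multiplier rule at the constrained minimizer, use the Nehari identity and $p<q$ to get $\langle\mathcal{L}'(v),v\rangle<0$, pair the multiplier equation with $v$ to force the multiplier to vanish, and conclude $I'(v)=0$ with the inclusion $\Gamma\subseteq\mathcal{N}$ giving the least energy property. If anything, your write-up is slightly more careful than the paper's, since you make explicit both the nondegeneracy $\mathcal{L}'(v)\neq 0$ needed to invoke the multiplier theorem and the final step $\Gamma\subseteq\mathcal{N}$, which the paper dismisses with ``one sees easily.''
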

\begin{proof} Let $I(v)=d$ for $v \in \mathcal{N}.$ This implies that 
	 $v$ is a minimiser for  $d$. Therefore,  from the Lagrange multiplier rule, we infer that there is $\theta \in \mathbb{R}$ such that for any $\psi \in W_{0}^{s,p}( \G)$,
	
	\begin{equation} \label{theq}
		\Big\langle I'(v), \psi\Big\rangle_{\G}=\theta\left\langle \mathcal{L}^{'}(v), \psi\right\rangle_{\G}.
	\end{equation}
	Here $\langle \cdot, \cdot \rangle_{\G}$ is a dual product between $W^{s,p}_0(\G)$ and its dual space. 
	Note that as $q>p$ we conclude, noting that $v \in \mathcal{N}$ and so $\mathcal{L}(v)=\|u\|_{W_{0}^{s,p}( \G)}^{p}-\|u\|_{L^{q}(\G)}^{q}=0,$ that
	
	\begin{equation*}
		\left\langle \mathcal{L}^{'}(v), v\right\rangle_{\G}=p\|v\|_{W^{s,p}_0(\G)}^{p}-q \int_{\G}|v|^{q} dx=(p-q) \int_{\G}|v|^{q} dx<0.
	\end{equation*}
	
 In addition, $\left\langle I'(v), v\right\rangle_{\G}=\mathcal{L}(v)=0$ by the choice of $v$. Consequently, \eqref{theq} forces   that $\theta=0$. Hence $I'(v)=0$. From Definition \ref{rd1.1}, one sees easily that $v$ is a least energy solution of \eqref{r1.7}.
	\end{proof} 
\noindent Having these supporting lemmas in hand, we are ready to prove Theorem \ref{rt1.2i}.
\begin{proof}[Proof of Theorem \ref{rt1.2i}]
     Let us choose $\left(v_{n}\right) \subset \mathcal{N}$ as a minimising sequence. Then, from the Ekeland variational principle,  we deduce that there is a sequence $\left(u_{n}\right) \subset \mathcal{N}$ such that
\begin{equation*}
	I(u_{n}) \rightarrow d \quad \text { and } \mathcal{L}(u_{n}) \rightarrow 0 .
\end{equation*}

The subellitic fractional Sobolev inequality \eqref{emG} and Lemma \ref{rl2.2} imply that there exist two positive constants $C_{1}$ and $C_{2}$ such that

\begin{equation*}
	C_{1} \leq\left\|u_{n}\right\|_{W^{s, p}_0(\G)} \leq C_{2} .
\end{equation*}

Combining this with the fact that $\left\|u_{n}\right\|_{W^{s, p}_0(\G)}^{p}=\int_{\G}\left|u_{n}\right|^{q} dx$ as $(u_n)_n \subset \mathcal{N}$, we have that there exists a positive constant $C_{3}$ such that
\begin{equation}\label{r2.1}
	\limsup _{n \rightarrow \infty} \int_{\G}\left|u_{n}(x)\right|^{q} dx \geq C_{3}>0 .
\end{equation}

Using the subelliptic Lions vanishing lemma (see Lemma \ref{vanishinglemma}), we have that $u_{n} \rightarrow 0$ in $L^{q}\left(\G\right)$ for any $p<q<p_s^*$ provided that 
\begin{equation*}
	\liminf _{n \rightarrow \infty} \sup _{y \in  \G} \int_{B(y, 1)}\left|u_{n}(x)\right|^{q} dx=0.
\end{equation*}
 Therefore,  \eqref{r2.1} implies that there exists $C_{4}>0$  such that

\begin{equation}\label{r2.2}
	\liminf _{n \rightarrow \infty} \sup _{y \in  \G} \int_{B(y, 1)}\left|u_{n}(x)\right|^{q} dx \geq C_{4}>0 .
\end{equation}

Therefore, we may assume that there is a $\tilde{y} \in  \G$ such that
\begin{equation}\label{r2.3}
	\liminf _{n \rightarrow \infty} \int_{B\left(\tilde{y}, 1\right)}\left|u_{n}(x)\right|^{p} dx \geq \frac{C_{4}}{2}>0 .
\end{equation}

Using the bi-invariance property of the Haar measure $dx$ on $\G,$ a direct computation yields that

\begin{equation*}
	I(u_{n}(y \circ \tilde{y}))=I(u_{n}(y))  
\end{equation*}
and 
\begin{equation*}
     \mathcal{L}(u_{n}(y \circ \tilde{y}))=\mathcal{L}(u_{n}(y))
\end{equation*}
for all $\tilde{y} \in \G.$
Let us define a new sequence $w_{n}(y):=u_{n}(y \circ \tilde{y})$. It is clear  that $I(w_{n})=I(u_{n})$ and $\mathcal{L}(w_{n})=\mathcal{L}(u_{n})$. Moreover, it provides that $(w_{n})$ is bounded in $W_{0}^{s,p}( \G)$ satisfying

\begin{equation}\label{r2.4}
	\liminf _{n \rightarrow \infty} \int_{B(0, 1)}\left|w_{n}(x)\right|^{q} dx \geq \frac{C_{4}}{2}>0 .
\end{equation}

Passing to a subsequence we may assume $w_{n} \rightarrow \phi$ weakly in $W_{0}^{s,p}( \G)$. Theorem \ref{l-3} implies that $w_{n} \rightarrow \phi$ strongly in $L_{\text{loc}}^{q}( \G)$. Combining this with \eqref{r2.4}, we know that $\phi \neq 0$. 

Next we will show that $(w_{n})$  converges to $\phi$ strongly in $W_{0}^{s,p}( \G)$. For this purpose, we first prove that $\mathcal{L}(\phi)=0.$ Contrary to this, let us consider the first case, if possible, $\mathcal{L}(\phi)<0$. Then, Lemma \ref{rl2.1} immediately implies  that there exits a $0<\theta_{\phi}<1$ such that $\theta_{\phi} \phi \in \mathcal{N}$. Therefore using the Fatou's  lemma and the fact that $\mathcal{L}(w_{n})=0$, we obtain
\begin{align}\label{r2.5}
		d+o(1) &=I(w_{n})=\left(1-\frac{1}{q}\right) \int_{\G}\left|w_{n}\right|^{q} dx -\left(1-\frac{1}{p}\right) \int_{\G}\left|w_{n}\right|^{p} dx \nonumber\\& \geq \left(1-\frac{1}{q}\right) \int_{\G}\left|w_{n}\right|^{q} dx \nonumber \\&\geq \left(1-\frac{1}{q}\right) \int_{\G}\left|\phi\right|^{q} dx +o(1) =\left(1-\frac{1}{q}\right) \theta_{\phi}^{-q} \int_{\G}\left|\theta_{\phi} \phi\right|^{q} dx+o(1)\nonumber\\
		&\geq\left(1-\frac{1}{q}\right) \theta_{\phi}^{-q} \int_{\G}\left|\theta_{\phi} \phi\right|^{q} dx -\left(1-\frac{1}{p}\right)\theta_{\phi}^{-q} \int_{\G}\left|\theta_{\phi}\phi\right|^{p} dx +o(1) \nonumber\\&=\theta_{\phi}^{-q} I(\theta_{\phi} \phi)+o(1) .
	\end{align}
We now deduce from $0<\theta_{\phi}<1$ that $I(\theta_{\phi} \phi)<d$, which is a contradiction  as $\theta_{\phi} \phi \in \mathcal{N}$.

Now, we consider the remaining case,  $\mathcal{L}(\phi)>0$. Then, from  Brezis-Lieb lemma \cite[Theorem 1.3.5]{BL83} with $\psi_{n}=w_{n}-\phi$ we have that
\begin{equation*}
	0=\mathcal{L}(w_{n})=\mathcal{L}(\phi)+\mathcal{L}(\psi_{n})+o(1).
\end{equation*}

As a consequence, along with the fact that $\mathcal{L}(\phi)>0$, one deduces that

\begin{equation}\label{r2.6}
	\limsup _{n \rightarrow \infty} \mathcal{L}\left(\psi_{n}\right)<0 .
\end{equation}

By Lemma \ref{rl2.1}, there is positive constant $\theta_{n}:=\theta_{\psi_{n}}$ such that $\theta_{n} \psi_{n} \in \mathcal{N}$. Moreover, we claim that $\lim \sup _{n \rightarrow \infty} \theta_{n} \in(0,1)$. Indeed, suppose that $\lim \sup _{n \rightarrow \infty} \theta_{n}=1$. Then, we can extract a subsequence $\left(\theta_{n_{j}}\right)$ such that $\lim _{j \rightarrow \infty} \theta_{n_{j}}=1$. Thus from the property $\theta_{n_{j}} \psi_{n_{j}} \in \mathcal{N}$, we obtain that $\mathcal{L}\left(\psi_{n_{j}}\right)=\mathcal{L}\left(\theta_{n_{j}} \psi_{n_{j}}\right)+o(1)=o(1)$. This is not possible in view of  \eqref{r2.6}. Therefore, we obtain the required conclusion that $\underset{n \rightarrow \infty}{\limsup}\, \theta_{n} \in(0,1)$. Since $\mathcal{L}(w_n)=0,$ similar to \eqref{r2.5}, using the Fatou's lemma we get
\begin{equation*}
	\begin{aligned}
		d+o(1) &=I\left(w_{n}\right)= \left(1-\frac{1}{q}\right) \int_{\G}\left|w_{n}\right|^{q} dx   -\left(1-\frac{1}{p}\right) \int_{\G}\left|w_{n}\right|^{p} dx \nonumber \\&\geq \nonumber  \left(1-\frac{1}{q}\right) \int_{\G}\left|w_{n}\right|^{q} dx  \geq\left(1-\frac{1}{q}\right) \int_{\G}\left|\psi_{n}\right|^{q} dx+o(1) \\
		&\geq \left(1-\frac{1}{q}\right) \theta_{n}^{-q} \int_{\G}\left|\theta_{n} \psi_{n}\right|^{q} dx-\left(1-\frac{1}{p}\right) \theta_{n}^{-q} \int_{\G}\left|\theta_{n} \psi_{n}\right|^{p} dx+o(1) \nonumber
  \\&=\theta_{n}^{-q} I\left(\theta_{n} \psi_{n}\right)+o(1).
	\end{aligned}
\end{equation*}

This along with the fact that $\underset{n \rightarrow \infty}{\lim \sup}\, \theta_{n} \in(0,1)$ implies that  $d>I\left(\theta_{n} \psi_{n}\right)$, which is a contradiction  as $\theta_{n} \psi_{n} \in \mathcal{N}$.

By joining both cases together, we conclude that
 $\mathcal{L}(\phi)=0$. 
 
 We are now in a position to prove our claim that $\psi_{n}=w_n-\phi \rightarrow 0$ in $W_{0}^{s,p}( \G)$. Indeed, if possible, suppose that it is not true,  that means, $\left\|\psi_{n}\right\|_{W^{s,p}(\G)} \nrightarrow 0$ as $n \rightarrow \infty$. Then as an application of the continuous fractional Sobolev embedding \eqref{emG}, the following two situations are possible:

(i) When $\int_{\G}\left|\psi_{n}\right|^{q} dx \nrightarrow  0$ as $n \rightarrow \infty,$ we have from

\begin{equation*}
	0=\mathcal{L}\left(w_{n}\right)=\mathcal{L}(\phi)+\mathcal{L}\left(\psi_{n}\right)+o(1)=\mathcal{L}\left(\psi_{n}\right)+o(1)
\end{equation*}
and therefore, $\psi_n \in \mathcal{N}.$
The Brezis-Lieb lemma \cite[Theorem 1.3.5]{BL83} implies that

\begin{equation*}
	d+o(1)=I\left(w_{n}\right)=I(\phi)+I\left(\psi_{n}\right)+o(1) \geq d+d+o(1),
\end{equation*}

which is not true as $d>0$.

(ii) When $\int_{\G}\left|\psi_{n}\right|^{q} dx \rightarrow 0$ as $n \rightarrow \infty,$ we have that

\begin{equation*}
	d+o(1)=I\left(w_{n}\right)=I(\phi)+\frac{1}{p}\left\|\psi_{n}\right\|_{W^{s,p}_0(\G)}^{p}+o(1) \geq d+\frac{1}{p}\left\|\psi_{n}\right\|_{W^{s,p}_0(\G)}^{p}+o(1)>d
\end{equation*}
which is impossible. 
Thus, our assumption that $\left\|\psi_{n}\right\|_{W^{s,p}(\G)} \nrightarrow 0$ is not correct. Therefore,  we deduce that $(w_{n})$  converges strongly to $\phi$ in $W_{0}^{s,p}( \G)$ and $\phi$ is a minimiser for  $d$. Finally, Lemma \ref{rl2.4} implies that $\phi$ is a least energy solution of \eqref{r1.7}. This completes the proof. \end{proof}

\section{Best constants in the subelliptic fractional   Gagliardo-Nirenberg inequalities }\label{sec4}

Throughout this section, we continue to assume the same range of $q,$ that is,  $p<q<p_s^*$ as in the previous section. The aim of this section is to give a sharp estimate of $C_{GN, \G}$ in \eqref{r1.6}. Firstly, we give some properties of the least energy solutions of \eqref{r1.7} which will be used in what follows.

\begin{lemma}\label{rl3.1}
	Let $\phi$ be a least energy solution of \eqref{r1.7}. Then
	
	\begin{equation}\label{se2}
		\iint_{\mathbb{G} \times \mathbb{G}}\frac{|\phi(x)-\phi(y)|^{p}}{|y^{-1}x|^{Q+ps}}dxdy=\frac{Q(q-p)}{pqs-Q(q-p)} \int_{\G}|\phi|^{p} dx
  \end{equation}
  
  and  \begin{equation} \label{se1}
      \int_{\G}|\phi|^{q} dx=\frac{pqs}{pqs-Q(q-p)} \int_{\G}|\phi|^{p} dx.
	\end{equation}
\end{lemma}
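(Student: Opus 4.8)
The plan is to read off both identities from two scalar relations satisfied by $\phi$: the Nehari identity (obtained by testing the equation against $\phi$ itself) and a Pohozaev-type identity (obtained by differentiating the energy along the group dilations). To keep the bookkeeping transparent, abbreviate $A := [\phi]_{s,p,\G}^p$, $B := \|\phi\|_{L^p(\G)}^p$ and $C := \|\phi\|_{L^q(\G)}^q$. Since a least energy solution lies on $\mathcal{N}$, the relation $\mathcal{L}(\phi)=0$ gives immediately the first scalar identity $A + B = C$.

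For the second relation I would exploit the dilation structure of $\G$. Set $\phi_\mu(x) := \phi(D_\mu^{-1} x)$ for $\mu > 0$. Using that $D_\mu$ is a group automorphism, the $1$-homogeneity $|D_\mu z| = \mu |z|$ of the quasi-norm, and the scaling $\mu(D_\mu(\Omega)) = \mu^Q \mu(\Omega)$ of the Haar measure, a change of variables in the double integral and in the $L^p$, $L^q$ integrals yields the exact scalings $[\phi_\mu]_{s,p,\G}^p = \mu^{Q-ps} A$, $\|\phi_\mu\|_{L^p(\G)}^p = \mu^Q B$ and $\|\phi_\mu\|_{L^q(\G)}^q = \mu^Q C$. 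Rather than differentiate $\mu \mapsto I(\phi_\mu)$ directly, which would force me to differentiate a curve inside the Banach space $W_0^{s,p}(\G)$, I would first project each $\phi_\mu$ onto the Nehari manifold via Lemma \ref{rl2.1}, writing $\theta_\mu := \theta_{\phi_\mu}$ so that $\theta_\mu \phi_\mu \in \mathcal{N}$, and study the purely scalar function
$$h(\mu) := I(\theta_\mu \phi_\mu) = \frac{q-p}{pq}\, \frac{\big(\mu^{Q-ps}A + \mu^Q B\big)^{\frac{q}{q-p}}}{\big(\mu^Q C\big)^{\frac{p}{q-p}}},$$
where I used that $I(u) = \tfrac{q-p}{pq}\|u\|_{W_0^{s,p}(\G)}^p$ for $u \in \mathcal{N}$ together with the explicit value of $\theta_\mu$ from Lemma \ref{rl2.1}. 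Because $\phi \in \mathcal{N}$ forces $\theta_1 = 1$, one has $h(1) = I(\phi) = d = \inf_{\mathcal{N}} I$, while $\theta_\mu\phi_\mu \in \mathcal{N}$ gives $h(\mu) \geq d$ for every $\mu > 0$; hence $h$ attains its global minimum at $\mu = 1$, and since $h$ is manifestly smooth in $\mu$ we get $h'(1) = 0$. Writing $P(\mu) = \mu^{Q-ps}A + \mu^Q B$ and $R(\mu) = \mu^Q C$ and taking the logarithmic derivative, the equation $h'(1)=0$ is equivalent to $q\,\tfrac{P'(1)}{P(1)} = p\,\tfrac{R'(1)}{R(1)}$, which simplifies to
$$q\big((Q-ps)A + Q B\big) = pQ(A+B).$$

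Finally I would solve the two scalar relations $A + B = C$ and $q((Q-ps)A + QB) = pQ(A+B)$ as a linear system in $A$ and $C$ with $B$ as a parameter. Eliminating and invoking $pqs - Q(q-p) > 0$ (which follows from the hypothesis $q < p_s^* = \tfrac{Qp}{Q-ps}$), this gives $A = \tfrac{Q(q-p)}{pqs - Q(q-p)}B$, that is \eqref{se2}, and then $C = A + B = \tfrac{pqs}{pqs - Q(q-p)}B$, that is \eqref{se1}. I expect the main obstacle to be the rigorous justification of the scaling identities for the nonlocal Gagliardo seminorm — carefully tracking the automorphism and the homogeneity of $D_\mu$ inside the double integral — rather than the algebra; the differentiability needed for $h'(1)=0$ is immediate from the closed form of $h$, and routing the Pohozaev identity through the scalar minimisation of $h$ is precisely what lets me avoid differentiating a path in $W_0^{s,p}(\G)$.
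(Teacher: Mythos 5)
Your proof is correct, and it reaches the two identities by a genuinely different mechanism than the paper at the key step. In your notation $A=[\phi]_{s,p,\G}^p$, $B=\|\phi\|_{L^p(\G)}^p$, $C=\|\phi\|_{L^q(\G)}^q$: both arguments start from the Nehari identity $A+B=C$ and both exploit the group dilations, but the paper derives its Pohozaev-type relation by differentiating the energy along the $L^p$-normalised dilation path $\tilde\phi_\lambda(x)=\lambda^{Q/p}\phi(D_\lambda(x))$ and invoking criticality: since $I'(\phi)=0$ and $\tilde\phi_1=\phi$, it asserts $\frac{\partial}{\partial\lambda}I(\tilde\phi_\lambda)\big|_{\lambda=1}=0$, which gives $spq\,A=Q(q-p)\,C$ directly. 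That step implicitly requires the chain rule for $\lambda\mapsto\tilde\phi_\lambda$ as a differentiable curve in $W_0^{s,p}(\G)$ --- exactly the point you flagged and chose to avoid. Your alternative --- projecting the unnormalised dilations $\phi_\mu=\phi(D_\mu^{-1}\,\cdot\,)$ onto $\mathcal{N}$ via Lemma \ref{rl2.1} and minimising the explicit scalar function $h$ --- replaces criticality by minimality: you use $I(\phi)=d=\inf_{\mathcal N}I$, and then $h(\mu)\ge h(1)=d$ forces $h'(1)=0$ by elementary one-variable calculus, with no Banach-space differentiation at all. Your relation $q\bigl((Q-ps)A+QB\bigr)=pQ(A+B)$, combined with $A+B=C$, is equivalent to the paper's $spq\,A=Q(q-p)\,C$, so the two routes feed the same linear system and yield \eqref{se2} and \eqref{se1} identically; your scaling exponents $\mu^{Q-ps}A$, $\mu^{Q}B$, $\mu^{Q}C$, the formula for $h$, and the positivity $pqs-Q(q-p)>0$ all check out. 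The trade-off is worth recording: the paper's argument (once the chain-rule step is justified) applies to \emph{every} nontrivial weak solution of \eqref{r1.7}, whereas yours genuinely uses the least-energy property --- specifically the identification $I(\phi)=d$, which is not automatic from the definition of a least energy solution but follows from Theorem \ref{rt1.2i} (all weak solutions lie on $\mathcal{N}$, and the Nehari infimum is attained by a solution); you should make that one-line justification explicit. In exchange, your proof is fully rigorous at the one point where the paper is terse.
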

\begin{proof}
	Since $\phi$ is a least energy solution of \eqref{r1.7}, we obtain from \eqref{r1.8} that
	
	\begin{equation} \label{eq5.2}
		\iint_{\mathbb{G} \times \mathbb{G}}\frac{|\phi(x)-\phi(y)|^{p}}{|y^{-1}x|^{Q+ps}}dxdy+\int_{\G} |\phi(x)|^{p} dx=\int_{\G}|\phi(x)|^{q} dx.
	\end{equation}
	
	Define a function $\tilde{\phi}_{\lambda}$  for $\lambda>0$ by $\tilde{\phi}_{\lambda}(x):=\lambda^{\frac{Q}{p}} \phi\left(D_{\lambda}(x)\right)$. Then, we have  
 \begin{align*}
      I(\tilde{\phi}_{\lambda})&= \frac{\lambda^Q}{p} \iint_{\G \times \G} \frac{|\tilde{\phi}_{\lambda}(x)-\tilde{\phi}_{\lambda}(y)|^p}{|y^{-1} x|^{Q+ps}} dx dy +\frac{\lambda^Q}{p} \int_{\G} |\tilde{\phi}_{\lambda}(x)|^{p} dx -\frac{\lambda^{\frac{Qq}{p}}}{q} \int_{ \G}|\tilde{\phi}_{\lambda}(x)|^{q} dx \\&= \frac{\lambda^{sp}}{p} \iint_{\G \times \G} \frac{|\phi(x)-\phi(y)|^p}{|y^{-1} x|^{Q+ps}} dx dy +\frac{1}{p} \int_{\G} |\phi(x)|^{p} dx -\frac{\lambda^{\frac{Qq}{p}-Q}}{q} \int_{ \G}|\phi(x)|^{q} dx.
 \end{align*}
 Then, it follows that 
	\begin{equation} \label{eq5.3}
		0=\left.\frac{\partial}{\partial \lambda} I\left(\tilde{\phi}_{\lambda}\right)\right|_{\lambda=1}= s\iint_{\G \times \G} \frac{|\phi(x)-\phi(y)|^p}{|y^{-1} x|^{Q+ps}} dx dy  -\frac{Q(q-p)}{pq} \int_{ \G}|\phi(x)|^{q} dx.
	\end{equation}
	Using \eqref{eq5.3} in \eqref{eq5.2} we get that 
 \begin{equation*}
     \int_{\G} |\phi(x)|^p dx = \Bigg(1 -\frac{Q(q-p)}{spq} \Bigg) \int_{ \G}|\phi(x)|^{q} dx,
 \end{equation*}
 which implies that 
	\begin{equation}
	    \int_{\G}|\phi|^{q} dx=\frac{pqs}{pqs-Q(q-p)} \int_{\G}|\phi|^{p} dx
	\end{equation}
 establishing \eqref{se1}. Using this in \eqref{eq5.2}, we get 
 $$\int_{\mathbb{G}}\int_{\mathbb{G}}\frac{|\phi(x)-\phi(y)|^{p}}{|y^{-1}x|^{Q+ps}}dxdy=\Bigg(\frac{pqs}{pqs-Q(q-p)}-1 \Bigg) \int_{\G}|\phi|^{p} dx= \frac{Q(p-q)}{pqs-Q(q-p)} \int_{\G}|\phi|^{p} dx,$$
 proving \eqref{se2}.
\end{proof}

\begin{lemma}\label{rl3.2} Let 
\begin{equation*}
	T_{\rho,p,q,s}:=\inf\left\{\|u\|^p_{W^{s,p}_0(\G)}: u \in W_{0}^{s,p}\left(\G\right)~ \text{ and }~ \int_{\G}|u|^{q} dx=\rho\right\}.
\end{equation*}
If $\phi$ is a minimiser obtained in Theorem \ref{rt1.2i}, then $\phi$ is a minimiser of $T_{\rho_{0}, p,q, s}$ with $\rho_{0}=\int_{\G}|\phi|^{q} dx$.
\end{lemma}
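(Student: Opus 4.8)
The plan is to exploit the relation between the constrained minimisation defining $T_{\rho_0,p,q,s}$ and the Nehari-manifold minimisation that characterises the least energy solution $\phi$. The starting observation is that the minimiser $\phi$ obtained in Theorem \ref{rt1.2i} lies in $\mathcal{N}$, so $\mathcal{L}(\phi)=0$; by the definition \eqref{neh} of $\mathcal{N}$ this reads $\|\phi\|^p_{W^{s,p}_0(\G)}=\int_{\G}|\phi|^q\,dx=\rho_0$. In particular $\phi$ is admissible for $T_{\rho_0,p,q,s}$, which gives the trivial bound $T_{\rho_0,p,q,s}\le\|\phi\|^p_{W^{s,p}_0(\G)}=\rho_0$. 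The whole content of the lemma is the reverse inequality: that no competitor carrying the same $L^q$-mass can have a strictly smaller $W^{s,p}_0$-norm.

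To prove this I would argue by contradiction. Suppose there is $v\in W^{s,p}_0(\G)$ with $\int_{\G}|v|^q\,dx=\rho_0$ but $\|v\|^p_{W^{s,p}_0(\G)}<\rho_0$. Then $\mathcal{L}(v)=\|v\|^p_{W^{s,p}_0(\G)}-\int_{\G}|v|^q\,dx<0$, so Lemma \ref{rl2.1} supplies a unique scaling factor $\theta_v\in(0,1)$ with $\theta_v v\in\mathcal{N}$. The key computational fact I will use is that on the Nehari manifold the functional simplifies: since $\mathcal{L}(u)=0$ forces $\|u\|^p_{W^{s,p}_0(\G)}=\int_{\G}|u|^q\,dx$, one gets $I(u)=\bigl(\tfrac1p-\tfrac1q\bigr)\|u\|^p_{W^{s,p}_0(\G)}$ for every $u\in\mathcal{N}$. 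Applying this to $\theta_v v$ and to $\phi$ yields
\begin{equation*}
I(\theta_v v)=\Big(\tfrac{1}{p}-\tfrac{1}{q}\Big)\theta_v^p\,\|v\|^p_{W^{s,p}_0(\G)}<\Big(\tfrac{1}{p}-\tfrac{1}{q}\Big)\|v\|^p_{W^{s,p}_0(\G)}<\Big(\tfrac{1}{p}-\tfrac{1}{q}\Big)\rho_0=I(\phi)=d,
\end{equation*}
where I used $\theta_v<1$ and $\|v\|^p_{W^{s,p}_0(\G)}<\rho_0$. This contradicts $\theta_v v\in\mathcal{N}$ together with $d=\inf_{\mathcal{N}}I$. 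Hence every admissible $v$ satisfies $\|v\|^p_{W^{s,p}_0(\G)}\ge\rho_0$, so $T_{\rho_0,p,q,s}\ge\rho_0=\|\phi\|^p_{W^{s,p}_0(\G)}$, and combined with the trivial bound this forces $\phi$ to realise the infimum.

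The argument is short, and I expect no serious technical obstacle; the only points needing care are the two ingredients already in hand, namely the existence and uniqueness of the Nehari rescaling $\theta_v$ with $\theta_v\in(0,1)$ precisely when $\mathcal{L}(v)<0$ (Lemma \ref{rl2.1}), and the identity $d=\bigl(\tfrac1p-\tfrac1q\bigr)\rho_0$ obtained by evaluating $I$ at $\phi\in\mathcal{N}$. The main conceptual step is recognising that minimising $I$ over $\mathcal{N}$ is equivalent to minimising the $W^{s,p}_0$-norm under a fixed $L^q$-constraint, once one uses the homogeneity of the kinetic and potential terms: sub-optimality of a competitor in the constrained problem can be converted, via the rescaling $\theta_v<1$, into a Nehari-level energy strictly below $d$, which is impossible.
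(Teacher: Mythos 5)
Your proof is correct and uses essentially the same mechanism as the paper: both rescale an arbitrary competitor with the same $L^q$-mass onto the Nehari manifold via Lemma \ref{rl2.1}, invoke the identity $I(u)=\bigl(\tfrac1p-\tfrac1q\bigr)\|u\|^p_{W^{s,p}_0(\G)}$ on $\mathcal{N}$, and exploit the minimality $I(\phi)=d$. The only difference is presentational: the paper argues directly with the explicit scaling factor $\lambda_0$ to conclude $\|\phi\|^p_{W^{s,p}_0(\G)}\leq\|u\|^p_{W^{s,p}_0(\G)}$ for every admissible $u$, whereas you argue by contradiction using the qualitative fact that $\mathcal{L}(v)<0$ forces $\theta_v\in(0,1)$.
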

\begin{proof}
	It is clear from the definition of $T_{\rho,p,q,s}$ that $\|\phi\|^{p}_{W^{s,p}_0(\G)} \geq T_{\rho_{0},p,q,s}$. Now, for any $u \in W_{0}^{s,p}\left(\G\right)$ satisfying $\int_{\G}|u|^{q} dx=$ $\int_{\G}|\phi|^{q} dx$, by Lemma \ref{rl2.1} there is a unique
	
	\begin{equation*}
		\lambda_{0}=\|u\|_{W^{s,p}_0(\G)}^{\frac{p}{q-p}}\left(\int_{\G}|u|^{q} dx\right)^{-\frac{1}{q-p}}
	\end{equation*}
	such that $\lambda_{0} u \in \mathcal{N},$ which implies that $\mathcal{L}\left(\lambda_{0} u\right)=0$. Since $\lambda_{0} u \neq 0$ and $\phi$ achieves the minimum $d$, a simple calculation asserts that 
	
	\begin{align}
			\left(\frac{1}{p}-\frac{1}{q}\right)\|\phi\|^{p}_{W^{s,p}_0(\G)} &=I(\phi) \leq I\left(\lambda_{0} u\right)=\left(\frac{1}{p}-\frac{1}{q}\right) \lambda_{0}^{p}\|u\|^{p}_{W^{s,p}_0(\G)} \nonumber\\
			&=\left(\frac{1}{p}-\frac{1}{q}\right)\|u\|_{W^{s,p}_0(\G)}^{\frac{p^2}{q-p}}\left(\int_{\G}|u|^{q} dx\right)^{-\frac{p}{q-p}}\|u\|_{W^{s,p}_0(\G)}^{p}\nonumber\\&=\left(\frac{1}{p}-\frac{1}{q}\right) \|u\|_{W^{s,p}_0(\G)}^{\frac{pq}{q-p}} \left(\int_{\G}|u|^{q} dx\right)^{-\frac{p}{q-p}}.
		\end{align}

	Since $\int_{\G}|u|^{q} dx=\int_{\G}|\phi|^{q} dx$ and $\int_{\G}|\phi|^{q} dx=\|\phi\|^{p}_{W^{s,p}_0(\G)},$ we get that
 \begin{align}
     \left(\frac{1}{p}-\frac{1}{q}\right)\|\phi\|^{p}_{W^{s,p}_0(\G)} &\leq \left(\frac{1}{p}-\frac{1}{q}\right)\|u\|_{W^{s,p}_0(\G)}^{\frac{pq}{q-p}} \|\phi\|_{W^{s,p}_0(\G)}^{-\frac{p^2}{q-p}}
 \end{align} implying that 
 $$\|\phi\|^{p}_{W^{s,p}_0(\G)} \leq \|u\|^{p}_{W^{s,p}_0(\G)}.$$
  Since $u$ is chosen arbitrarily, we get that $T_{\rho_{0},p,q,s} \geq\|\phi\|^{p}_{W^{s,p}_0(\G)}$. Therefore, we have $T_{\rho_{0},p,q,s}=\|\phi\|^{p}_{W^{s,p}_0(\G)}$ and this show that  $\phi$ is a minimiser of $T_{\rho_{0},p,q,s}$. \end{proof}

Now we are ready to obtain an expression for $C_{GN, \G}$. In other words, we will now present a proof of Theorem \ref{rt1.3}.

\begin{proof}[{\bf Proof of Theorem \ref{rt1.3}}]
  We define the functional, for all $u \in W^{s,p}_0(\G) \backslash \{0\},$
\begin{equation*}
	J(u):=\frac{\left(\int_{\mathbb{G}}\int_{\mathbb{G}}\frac{|u(x)-u(y)|^{p}}{\left|y^{-1} x\right|^{Q+ps}} dxdy\right)^{\frac{Q(q-p)}{sp^2}}\left(\int_{\mathbb{G}}|u(x)|^{p}dx\right)^{\frac{spq-Q(q-p)}{sp^2}}}
		{\int_{\mathbb{G}}|u(x)|^{q}dx}.
\end{equation*}
Then the sharp estimate of $C_{GN, \G}$ can be estimated by studying the following minimisation problem

\begin{equation*}
	C_{GN, \G}^{-1}=\inf \left\{J(u): u \in W_{0}^{s,p}\left(\G\right) \backslash\{0\}\right\} .
\end{equation*}
Let $\phi \in W^{s,p}_0(\G)$ be a least energy solution of \eqref{r1.71}. Then, by definition of $J$ we obtain
$$C_{GN, \G}^{-1} \leq J(\phi).$$ From Lemma \ref{rl3.1} we obtain that

	\begin{align*}
		C_{GN, \G}^{-1} \leq J(\phi)=&\frac{\left(\int_{\mathbb{G}}\int_{\mathbb{G}}\frac{|\phi(x)-\phi(y)|^{p}}{\left|y^{-1} x\right|^{Q+ps}} dxdy\right)^{\frac{Q(q-p)}{sp^2}}\left(\int_{\mathbb{G}}|\phi(x)|^{p}dx\right)^{\frac{spq-Q(q-p)}{sp^2}}}
		{\int_{\mathbb{G}}|\phi(x)|^{q}dx}\\
		=&  \frac{\left(\frac{Q(q-p)}{pqs-Q(q-p)} \int_{\G}|\phi|^{p} dx\right)^{\frac{Q(q-p)}{sp^2}}\left(\int_{\mathbb{G}}|\phi(x)|^{p}dx\right)^{\frac{spq-Q(q-p)}{sp^2}}}
		{\frac{pqs}{pqs-Q(q-p)} \int_{\G}|\phi|^{p} dx}   \\
		=& \frac{pqs-Q(q-p)}{pqs} \left(\frac{Q(q-p)}{pqs-Q(q-p)} \right)^{\frac{Q(q-p)}{sp^2}}\left(\int_{\G}|\phi|^{p} dx\right)^{\frac{q-p}{p}}.
	\end{align*}
Now, we will estimate $C_{GN, \G}^{-1}$ from below. For any $u \in W_{0}^{s,p}\left(\G\right) \backslash\{0\}$, we define $$w(x):=\lambda u\left(D_{\mu}(x)\right),$$
where $\lambda$ and $\mu$ are two positive parameters, which will be determined later. It is easy to see, by direct calculations, that 
\begin{equation}\label{eqs1}
    \int_{\mathbb{G}}\int_{\mathbb{G}}\frac{|w(x)-w(y)|^{p}}{|y^{-1}x|^{Q+ps}}dxdy=\lambda^{p} \mu^{ps-Q} \int_{\mathbb{G}}\int_{\mathbb{G}}\frac{|u(x)-u(y)|^{p}}{|y^{-1}x|^{Q+ps}}dxdy,
\end{equation}
\begin{equation}\label{eqs2}
    \int_{\G}|w(x)|^{p} dx=\lambda^{p} \mu^{-Q} \int_{\G}|u(x)|^{p} dx
\end{equation}
and 
\begin{equation}\label{eqs3}
    \int_{\G}|w(x)|^{q} dx=\lambda^{q} \mu^{-Q} \int_{\G}|u(x)|^{q} dx.
\end{equation}
	
We choose $\lambda$ and $\mu$ such that 
\begin{equation*}
	\lambda^{q} \mu^{-Q} \int_{\G}|u(x)|^{q} dx:=\int_{\G}|\phi(x)|^{q} dx=\frac{pq s}{p qs-Q(q-p)} \int_{\G}|\phi(x)|^{p} dx 
\end{equation*}
and 
\begin{equation*}
	\lambda^{p} \mu^{-Q} \int_{\G}|u(x)|^{p} dx:=\int_{\G}|\phi(x)|^{p} dx.
\end{equation*}

From the above two equalities, it is straightforward to  see that

\begin{equation*}
	\lambda^{p}=\left(\frac{pqs}{ pqs-Q(q-p)}\right)^{\frac{p}{q-p}}\left(\int_{\G}|u|^{p} dx\right)^{\frac{p}{q-p}}\left(\int_{\G}|u|^{q} dx\right)^{-\frac{p}{q-p}}
\end{equation*}

and
\begin{align*}
		\mu^{ps-Q}=&\left(\frac{ pqs}{pqs-Q(q-p)}\right)^{\frac{p}{q-p} \cdot \frac{ps-Q}{Q}}\left(\int_{\G}|u|^{p} dx\right)^{\frac{q}{q-p} \cdot \frac{ps-Q}{Q}} \\
		&\quad\quad\quad \times\left(\int_{\G}|u|^{q} dx\right)^{-\frac{p}{q-p} \cdot \frac{ps-Q}{Q}}\left(\int_{\G}|\phi|^{p} dx\right)^{-\frac{ps-Q}{Q}}.
	\end{align*}

From the choice of the parameters $\lambda$ and $\mu$ as above and from \eqref{eqs2} and \eqref{eqs3},  we have that $$\int_{\G}|w|^{p} dx=\int_{\G}|\phi|^{p} dx$$ and $$\int_{\G}|w|^{q} dx=\int_{\G}|\phi|^{q} dx.$$ 

Since $\phi$ is a minimiser of $T_{\rho_{0},p,q,s}$ with $\rho_{0}=\int_{\G}|\phi|^{q} dx$, by Lemma \ref{rl3.2} we get that
\begin{equation*}
	\int_{\mathbb{G}}\int_{\mathbb{G}}\frac{|w(x)-w(y)|^{p}}{|y^{-1}x|^{Q+ps}}dxdy \geq \int_{\mathbb{G}}\int_{\mathbb{G}}\frac{|\phi(x)-\phi(y)|^{p}}{|y^{-1}x|^{Q+ps}}dxdy.
\end{equation*}

Therefore, using \eqref{eqs1} and by  Lemma \ref{rl3.1} we get
\begin{align*}
    \lambda^{p} \mu^{ps-Q} &\int_{\mathbb{G}}\int_{\mathbb{G}}\frac{|u(x)-u(y)|^{p}}{|y^{-1}x|^{Q+ps}}dxdy=\int_{\mathbb{G}}\int_{\mathbb{G}}\frac{|w(x)-w(y)|^{p}}{|y^{-1}x|^{Q+ps}}dxdy\\&\geq \int_{\mathbb{G}}\int_{\mathbb{G}}\frac{|\phi(x)-\phi(y)|^{p}}{|y^{-1}x|^{Q+ps}}dxdy=\frac{Q(q-p)}{pqs-Q(q-p)} \int_{\G}|\phi|^{p} dx.
\end{align*}

Therefore, by substituting the value of $\lambda^p$ and $\mu^{ps-Q}$  we have

	\begin{align*}
		&\left(\frac{pqs}{ pqs-Q(q-p)}\right)^{\frac{p}{q-p}\left(1+\frac{ps-Q}{Q}\right)}\left(\int_{\G}|u|^{p} dx\right)^{\frac{p}{q-p}+\frac{q}{q-p} \cdot \frac{ps-Q}{Q}}\left(\int_{\G}|u|^{q} dx\right)^{-\frac{p}{q-p}\left(1+\frac{ps-Q}{Q}\right)} \\
		&\quad \times\left(\int_{\G}|\phi|^{p} dx\right)^{-\frac{ps-Q}{Q}} \int_{\mathbb{G}}\int_{\mathbb{G}}\frac{|u(x)-u(y)|^{p}}{|y^{-1}x|^{Q+ps}}dxdy \geq \frac{Q(q-p)}{pqs-Q(q-p)} \int_{\G}|\phi|^{p} dx.
	\end{align*}

It is now deduced by direct calculations that

\begin{equation*}
	J(u) \geq \frac{pqs-Q(q-p)}{ pqs}\left(\frac{Q(q-p)}{pqs-Q(q-p)}\right)^{\frac{Q(q-p)}{p^2s}}\left(\int_{\G}|\phi|^{p} dx\right)^{\frac{q-p}{p}}.
\end{equation*}

Since $u$ is chosen arbitrarily, we get that

\begin{equation*}
	C_{GN, \G}^{-1} \geq \frac{pqs-Q(q-p)}{ pqs}\left(\frac{Q(q-p)}{pqs-Q(q-p)}\right)^{\frac{Q(q-p)}{p^2s}}\left(\int_{\G}|\phi|^{p} dx\right)^{\frac{q-p}{p}}.
\end{equation*}

Therefore, combining the lower and upper estimate of $C_{GN, \G}^{-1}$ we get the first equality of \eqref{r1.10}.

Now, using the property  $d=I(\phi)$ and Lemma \ref{rl3.1} we have
\begin{align*}
    d=I(\phi)&=\frac{1}{p} \iint_{\G \times \G} \frac{|\phi(x)-\phi(y)|^p}{|y^{-1} x|^{Q+ps}} dx dy +\frac{1}{p} \int_{\G} |\phi(x)|^{p} dx -\frac{1}{q} \int_{ \G}|\phi(x)|^{q} dx\\&=\frac{Q(q-p)}{p^2qs-pQ(q-p)} \int_{\G}|\phi|^{p} dx+\frac{1}{p} \int_{\G} |\phi(x)|^{p} dx-\frac{ps}{pqs-Q(q-p)} \int_{\G}|\phi|^{p} dx\\&=\frac{(q-p)s}{pqs-Q(q-p)} \int_{\G}|\phi|^{p} dx.
\end{align*}
This implies that 
\begin{equation} \label{exofd}
	\int_{\G}|\phi|^{p} dx=\frac{pqs-Q(q-p)}{(q-p)s} d,
\end{equation}
which combined with the first equality of \eqref{r1.10} gives 
$$C^{-1}_{GN, \G}=\frac{pqs-Q(q-p)}{pqs}\left(\frac{Q(q-p)}{pqs-Q(q-p)}\right)^{\frac{Q(q-p)}{sp^2}}\left(\frac{pqs-Q(q-p)}{(q-p)s} d\right)^{\frac{q-p}{p}},$$
proving the second equality of \eqref{r1.10}. Hence, the proof of Theorem \ref{rt1.3} is completed. 
\end{proof}

\section{Best contants on the subelliptic fractional Sobolev inequalities }\label{sec5}



The purpose of this section is to discuss  the sharp constant $C_{S, \G}$ in the  subelliptic fractional Sobolev inequalities on the stratified Lie groups.
Here $C_{S, \G}$ is the smallest constant among all positive constants $C$ such that

\begin{equation}\label{r4.2}
	\left(\int_{\mathbb{G}}|u(x)|^{q}dx\right)^{\frac{p}{q}}\leq C\left[\int_{\mathbb{G}}\int_{\mathbb{G}}\frac{|u(x)-u(y)|^{p}}{\left|y^{-1} x\right|^{Q+ps}} dxdy+\int_{\mathbb{G}}|u(x)|^{p})dx\right]
\end{equation}

holds for any $u \in W_{0}^{s,p}\left(\G\right)$. Then $C_{S, \G}$ can be given in the following form:

\begin{equation}\label{r4.3}
	C_{S, \G}^{-1}:=\inf _{u \in W_{0}^{s,p}\left(\G\right) \backslash\{0\}} \frac{\left[\int_{\mathbb{G}}\int_{\mathbb{G}}\frac{|u(x)-u(y)|^{p}}{\left|y^{-1} x\right|^{Q+ps}} dxdy+\int_{\mathbb{G}}|u(x)|^{p})dx\right]}{\left(\int_{\mathbb{G}}|u(x)|^{q}dx\right)^{\frac{p}{q}}}.
\end{equation}

In the following result, we present an expression for $C_{S, \G}$ in terms of the least energy solution of \eqref{r1.71}.

\begin{theorem}\label{rt4.1}
	Let $p<q<p_s^*:=\frac{Qp}{Q-ps}$ and $s \in (0, 1)$. Let $\phi$ be a least energy solution of \eqref{r1.71} and  let  $C_{S, \mathbb{G}}>0$ be the least positive constant such that the inequality \eqref{r4.2} is true. Then we have
		
		\begin{align}\label{Sobo const}
		C_{S,\mathbb{G}}^{-1}&=\left(\frac{spq}{spq-Q(q-p)}\int_{\mathbb{G}}|\phi(x)|^{p}dx\right)^{\frac{q-p}{q}}=\left(\frac{pq}{q-p} d \right)^{\frac{q-p}{q}},
		\end{align}
		where $d=I(\phi):=\inf_{u\in\mathcal{N}}I(u)$. 
\end{theorem}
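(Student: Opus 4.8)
The plan is to show that the infimum defining $C_{S,\G}^{-1}$ in \eqref{r4.3} is attained at the least energy solution $\phi$, so that the Rayleigh quotient evaluated at $\phi$ is the sharp constant; the two displayed formulas then follow by purely algebraic simplification using Lemma \ref{rl3.1} and the expression \eqref{exofd} for $d$. Write $R(u):=\dfrac{[u]_{s,p,\G}^p+\|u\|_{L^p(\G)}^p}{\left(\int_\G|u|^q\,dx\right)^{p/q}}$ for the quotient in \eqref{r4.3}, so that $C_{S,\G}^{-1}=\inf_{u\neq 0}R(u)$; the hypotheses $p<q<p_s^*$ guarantee $pqs-Q(q-p)>0$, so all constants below are positive.

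For the upper bound I would simply substitute $u=\phi$ into \eqref{r4.3}. By the weak-solution identity \eqref{eq5.2} the numerator equals $\int_\G|\phi|^q\,dx$, so
$$C_{S,\G}^{-1}\le R(\phi)=\frac{[\phi]_{s,p,\G}^p+\|\phi\|_{L^p(\G)}^p}{\left(\int_{\G}|\phi|^q\,dx\right)^{p/q}}=\left(\int_{\G}|\phi|^q\,dx\right)^{\frac{q-p}{q}}.$$
The heart of the argument is the matching lower bound. Observe that $R$ is invariant under the scalar rescaling $u\mapsto\lambda u$, $\lambda>0$, since numerator and denominator are both positively homogeneous of degree $p$. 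Hence, given arbitrary $u\in W_0^{s,p}(\G)\setminus\{0\}$, I would choose $\lambda$ so that $\tilde u:=\lambda u$ satisfies $\int_\G|\tilde u|^q\,dx=\rho_0:=\int_\G|\phi|^q\,dx$, leaving $R(u)=R(\tilde u)$ unchanged. By Lemma \ref{rl3.2}, $\phi$ is a minimiser of $T_{\rho_0,p,q,s}=\inf\{\|v\|_{W_0^{s,p}(\G)}^p:\int_\G|v|^q\,dx=\rho_0\}$, whence
$$[\tilde u]_{s,p,\G}^p+\|\tilde u\|_{L^p(\G)}^p=\|\tilde u\|_{W_0^{s,p}(\G)}^p\ge\|\phi\|_{W_0^{s,p}(\G)}^p=[\phi]_{s,p,\G}^p+\|\phi\|_{L^p(\G)}^p.$$
Dividing by $\rho_0^{p/q}$ gives $R(u)=R(\tilde u)\ge R(\phi)$, so $C_{S,\G}^{-1}\ge R(\phi)$ and therefore $C_{S,\G}^{-1}=R(\phi)$.

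The step carrying all the analytic weight is Lemma \ref{rl3.2} (hence, ultimately, the existence Theorem \ref{rt1.2i} and the Nehari-manifold machinery); everything else is bookkeeping, and I expect no genuine obstacle beyond invoking the constrained-minimisation property correctly. It is worth noting that this is \emph{simpler} than the Gagliardo--Nirenberg case: there the numerator of $J$ splits into separate powers of $[u]^p$ and $\|u\|_{L^p}^p$, forcing a two-parameter dilation $w=\lambda u(D_\mu\cdot)$ to match both the $L^p$ and $L^q$ masses, whereas here the numerator \emph{is} the full norm $\|u\|_{W_0^{s,p}(\G)}^p=T_{\rho_0}$, so matching the $L^q$ mass alone suffices. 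Finally, to reach the two closed forms I would simplify $R(\phi)$: by \eqref{se1}, $\int_\G|\phi|^q\,dx=\frac{pqs}{pqs-Q(q-p)}\int_\G|\phi|^p\,dx$, which yields the first expression in \eqref{Sobo const}; substituting the identity \eqref{exofd}, namely $\int_\G|\phi|^p\,dx=\frac{pqs-Q(q-p)}{(q-p)s}\,d$, collapses the prefactor to $\frac{pq}{q-p}d$ and produces the second expression $\left(\frac{pq}{q-p}d\right)^{(q-p)/q}$. Since the final formula depends only on $d$, independence from the particular choice of $\phi$ (Remark \ref{rr1.4}) is immediate.
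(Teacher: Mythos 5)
Your proposal is correct and follows essentially the same route as the paper's proof: the upper bound comes from evaluating the quotient at $\phi$ using $\mathcal{L}(\phi)=0$, the lower bound from normalising an arbitrary $u$ to match the $L^q$ mass of $\phi$ (your scalar rescaling $\tilde u=\lambda u$ is exactly the paper's $\tilde u:=\|\phi\|_{L^{q}(\G)}\|u\|_{L^{q}(\G)}^{-1}u$) and invoking Lemma \ref{rl3.2}, and the closed forms then follow from Lemma \ref{rl3.1} and \eqref{exofd}. Your side remark correctly identifies why only a one-parameter normalisation is needed here, in contrast to the two-parameter dilation used for the Gagliardo--Nirenberg constant.
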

\begin{proof} Since $\phi$ is a least energy solution of  \eqref{r1.7},  $\mathcal{L}(\phi)=0$ and  Lemma \ref{rl3.1} imply  that
	
	\begin{align*}
	    \frac{\left[\int_{\mathbb{G}}\int_{\mathbb{G}}\frac{|\phi(x)-\phi(y)|^{p}}{\left|y^{-1} x\right|^{Q+ps}} dxdy+\int_{\mathbb{G}}|\phi(x)|^{p})dx\right]}{\left(\int_{\mathbb{G}}|\phi(x)|^{q}dx\right)^{\frac{p}{q}}}&=\left(\int_{\G}|\phi|^{q} dx\right)^{\frac{q-p}{q}}\\&=\left(\frac{sp q}{spq-Q(q-p)} \int_{\G}|\phi|^{p} dx\right)^{\frac{q-p}{q}},
	\end{align*}
	which further shows that 

\begin{equation} \label{sub}
		C_{S, \G}^{-1} \leq\left(\frac{sp q}{spq-Q(q-p)} \int_{\G}|\phi|^{p} dx\right)^{\frac{q-p}{q}}.
	\end{equation}
	
	We define $\tilde{u}(x):=\|\phi\|_{L^{q}(\G)}\|u\|_{L^{q}(\G)}^{-1} u(x)$ for any $u \in W_{0}^{s,p}\left(\G\right) \backslash\{0\}$ so that $$\int_{\G}|\tilde{u}(x)|^{q} dx=\int_{\G}|\phi(x)|^{q} dx.$$ Therefore we have from Lemma \ref{rl3.2} that
	
	\begin{align} \label{lab6.6}
	  \nonumber  \int_{\mathbb{G}}\int_{\mathbb{G}}\frac{|\tilde{u}(x)-\tilde{u}(y)|^{p}}{\left|y^{-1} x\right|^{Q+ps}} dxdy+\int_{\mathbb{G}}|\tilde{u}(x)|^{p})dx &\geq \int_{\mathbb{G}}\int_{\mathbb{G}}\frac{|\phi(x)-\phi(y)|^{p}}{\left|y^{-1} x\right|^{Q+ps}} dxdy+\int_{\mathbb{G}}|\phi(x)|^{p})dx\\&=\frac{sp q}{spq-Q(q-p)} \int_{\G}|\phi|^{p} dx,
	\end{align}
 where we have used \eqref{se2} in the last equality. Now a simple calculation gives that 
\begin{align*}
    &\frac{\left[\int_{\mathbb{G}}\int_{\mathbb{G}}\frac{|u(x)-u(y)|^{p}}{\left|y^{-1} x\right|^{Q+ps}} dxdy+\int_{\mathbb{G}}|u(x)|^{p})dx\right]}{\left(\int_{\mathbb{G}}|u(x)|^{q}dx\right)^{\frac{p}{q}}} \\&\quad\quad\quad\quad= \Bigg(  \int_{\mathbb{G}}\int_{\mathbb{G}}\frac{|\tilde{u}(x)-\tilde{u}(y)|^{p}}{\left|y^{-1} x\right|^{Q+ps}} dxdy+\int_{\mathbb{G}}|\tilde{u}(x)|^{p})dx\Bigg) \|\phi\|_{L^q(\G)}^{-p},
\end{align*}

and therefore, using \eqref{se1} and \eqref{lab6.6}  we obtain that
	\begin{align} \label{slb1}
	    &\frac{\left[\int_{\mathbb{G}}\int_{\mathbb{G}}\frac{|u(x)-u(y)|^{p}}{\left|y^{-1} x\right|^{Q+ps}} dxdy+\int_{\mathbb{G}}|u(x)|^{p})dx\right]}{\left(\int_{\mathbb{G}}|u(x)|^{q}dx\right)^{\frac{p}{q}}} \nonumber \\  \nonumber \quad\quad\quad\quad&\geq \frac{sp q}{spq-Q(q-p)} \left(\int_{\G}|\phi|^{p} dx  \right)\left(\int_{\mathbb{G}}|\phi(x)|^{q}dx\right)^{-\frac{p}{q}}\\&= \left(\frac{sp q}{spq-Q(q-p)}\int_{\mathbb{G}}|\phi(x)|^{p}dx\right)^{\frac{q-p}{q}}.
	\end{align}
	By the arbitrariness of $u \in W^{s,p}_0(\G)$ in \eqref{slb1} we deduce that
	
	\begin{equation}\label{slb}
		C_{S, \G}^{-1} \geq\left(\frac{sp q}{spq-Q(q-p)}\int_{\mathbb{G}}|\phi(x)|^{p}dx\right)^{\frac{q-p}{q}}.
	\end{equation}
	Finally, the estimates \eqref{sub} and \eqref{slb} imply the first equality in \eqref{Sobo const}. The last equality is an immediate consequence of a substitution  \eqref{exofd} in the first equality of \eqref{Sobo const}. This completes the proof. \end{proof}

\section{ Subelliptic fractional  logarithmic Sobolev inequalities with explicit constants}  \label{sec6}
The aim of this section is to obtain the subelliptic  fractional  logarithmic Sobolev inequalities  with explicit constants on the stratified Lie groups. We adopt the method developed by Merker \cite{Mer08} (see also \cite{CKR21b}). In this regard, we first prove the following result extending a result of Merker from the Euclidean space to general measure spaces. 
\begin{theorem}\label{hol+loghol} Let $\mathbb{X}$ be a measure space. 
   The following statements are equivalent: 
   \begin{itemize}
       \item[(i)] The H\"older type inequality 
       \begin{equation} \label{eq4.1}
            \|u\|_{L^r(\mathbb{X})} \leq \|u \|_{L^p(\mathbb{X})}^{a} \|u\|_{L^q(\mathbb{X})}^{1-a}
       \end{equation}
       is valid for all $u \in L^p(\mathbb{X}) \cap L^q(\mathbb{X})$ for the parameters $p,q$ and $r$    ranging over $0<p\leq r\leq q \leq \infty$ and satisfying $\frac{1}{r}=\frac{a}{p}+\frac{1-a}{q}$ with $a \in [0, 1].$
       \item[(ii)] The logarithmic H\"older type inequality 
       \begin{equation} \label{eqlog}
    \int_{\mathbb{X}} \frac{|u(x)|^p}{\|u\|_{L^p(\mathbb{X})}^p} \log \left( \frac{|u(x)|^p}{\|u\|_{L^p(\mathbb{X})}^p} \right) dx\leq \frac{q}{q-p} \log \left(\frac{\|u\|^p_{L^q(\mathbb{X})}}{\|u\|_{L^p(\mathbb{X})}^p} \right),
\end{equation} 
is valid for all $u \in L^p(\mathbb{X}) \cap L^q(\mathbb{X})$ for the parameters $p$ and $q$ satisfying $0<p <q \leq \infty.$
   \end{itemize}
\end{theorem}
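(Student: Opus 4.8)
The plan is to recast both statements as a single convexity property of the function $f(r):=\log\|u\|_{L^r(\mathbb{X})}$, viewed after the change of variable $t=1/r$ as $F(t):=f(1/t)=\log\|u\|_{L^{1/t}(\mathbb{X})}$. I may assume $0<\|u\|_{L^p(\mathbb{X})},\|u\|_{L^q(\mathbb{X})}<\infty$, for otherwise both inequalities are trivial; by the interpolation embedding this forces $\|u\|_{L^r(\mathbb{X})}<\infty$ for all $r\in[p,q]$, so $f$ and $F$ are well defined on the relevant range. Writing $\tfrac1r=\tfrac ap+\tfrac{1-a}q$ as $t=at_p+(1-a)t_q$ with $t_p=1/p,\ t_q=1/q$ and taking logarithms in \eqref{eq4.1}, one sees that (i) is exactly the convexity inequality $F(t)\le aF(t_p)+(1-a)F(t_q)$; thus (i) asserts convexity of $F$, while (ii) will be its infinitesimal form. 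The single identity that drives everything is that, for $f\in C^1$, the left-hand side of \eqref{eqlog} equals $p^2 f'(p)$, which is immediate from $f'(r)=-\tfrac1{r^2}\log\|u\|_{L^r}^r+\tfrac1r\,\|u\|_{L^r}^{-r}\int_{\mathbb{X}}|u|^r\log|u|$.

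For (i)$\,\Rightarrow\,$(ii) I would fix $p<q$ and set $G(a):=a\log\|u\|_{L^p}+(1-a)\log\|u\|_{L^q}-f(r(a))$ for $a\in[0,1]$, where $\tfrac1{r(a)}=\tfrac ap+\tfrac{1-a}q$. Inequality (i) gives $G\ge 0$ on $[0,1]$ with $G(1)=0$, so the one-sided derivative satisfies $G'(1^-)\le 0$. Differentiating under the integral sign using $\tfrac{d}{dr}\int_{\mathbb{X}}|u|^r=\int_{\mathbb{X}}|u|^r\log|u|$ and evaluating at $a=1$ yields $G'(1)=f(p)-f(q)+\tfrac{p(q-p)}{q}f'(p)$; clearing the positive factor $\tfrac{pq}{q-p}$ and invoking the identity above turns $G'(1^-)\le 0$ into precisely $L\le \tfrac{q}{q-p}\log\big(\|u\|_{L^q}^p/\|u\|_{L^p}^p\big)$, i.e. \eqref{eqlog}.

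For (ii)$\,\Rightarrow\,$(i) I would run the same identity in reverse. Calling the smaller exponent $m$ and the larger one $q$, rewrite \eqref{eqlog} as $m^2 f'(m)\le \tfrac{qm}{q-m}\big(f(q)-f(m)\big)$; passing to $t=1/m$ (so that $\tfrac{q}{q-m}=\tfrac{t}{t-t_q}$ and $m^2 f'(m)=-F'(t)$) converts this into $\tfrac{F(t)-F(t_q)}{t-t_q}\le F'(t)$ for every $t>t_q$. Since $q$ is arbitrary, $t_q=1/q$ ranges over all admissible base points, and for each fixed base point the inequality says $\tfrac{d}{dt}\big[(F(t)-F(t_q))/(t-t_q)\big]\ge 0$, i.e. the secant slope of $F$ from $t_q$ is nondecreasing. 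As this holds from every base point, the three-chord lemma gives convexity of $F$, which is (i).

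The main obstacle is analytic rather than algebraic: justifying the differentiation of $r\mapsto\int_{\mathbb{X}}|u|^r$ and the meaning of the entropy integral on a possibly non-finite measure space. On any compact subinterval of the open interval $(p,q)$ the integrand $|u|^r|\log|u||$ is dominated by $|u|^{r_1}+|u|^{r_2}$ with $p<r_1<r_2<q$, which is integrable by interpolation, so $f\in C^1((p,q))$ and the differentiation is legitimate there. The left-hand side of \eqref{eqlog} is always well defined in $[-\infty,\infty)$, since with $t:=|u|^p/\|u\|_{L^p}^p\in L^{q/p}(\mathbb{X})$ and $q/p>1$ its positive part is finite; hence when it equals $-\infty$ the inequality is trivial, and otherwise $\int_{\mathbb{X}}|u|^p\log|u|$ is finite and the one-sided endpoint derivative in (i)$\,\Rightarrow\,$(ii) is valid. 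The degenerate cases $a\in\{0,1\}$ and $q=\infty$ are then recovered by continuity and monotone limits.
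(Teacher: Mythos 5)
Your proposal is correct and follows essentially the same route as the paper: both recast (i) as convexity of $t\mapsto\log\|u\|_{L^{1/t}(\mathbb{X})}$ and identify the left-hand side of \eqref{eqlog} with (a multiple of) the derivative of this function, so that (ii) becomes exactly the derivative-versus-secant-slope characterization of convexity. Your write-up additionally supplies details the paper leaves implicit, namely the justification of differentiation under the integral sign, the well-definedness of the entropy integral, and the secant-monotonicity (three-chord) argument behind the paper's assertion that ``this process is reversible at every step.''
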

\begin{proof} We first observe that the validity of \eqref{eq4.1} for the parameters $p,q$ and $r$    ranging over $0<p\leq r\leq q \leq \infty$ and satisfying $\frac{1}{r}=\frac{a}{p}+\frac{1-a}{q}$ with $a \in [0, 1]$ is equivalent to the convexity of the function $\phi: \frac{1}{r} \mapsto \log(\|u\|_r).$ Indeed, this follows by taking logarithms on both sides of \eqref{eq4.1} to have 
$$\phi\left( \frac{1}{r} \right)\leq a\phi\left( \frac{1}{p} \right)+(1-a) \phi\left( \frac{1}{q} \right),$$
with $a \in [0, 1]$ satisfying  $\frac{1}{r}=\frac{a}{p}+\frac{1-a}{q}.$
Now, the derivative of  the function $$\phi(h):=\log(\|u\|_{\frac{1}{h}})=h \log \Big( \int_{\mathbb{X}} |u(x)|^{\frac{1}{h}} dx \Big)$$
is given by 
$$\phi^{'}(h)= \log\Big( \int_{\mathbb{X}} |u(x)|^{\frac{1}{h}} dx \Big)- \frac{1}{h} \frac{\int_{\mathbb{X}} |u(x)|^{\frac{1}{h}} \log(|u(x)|)dx}{\int_{\mathbb{X}} |u(x)|^{\frac{1}{h}} dx}.$$
Since the  convexity of $\phi$ on $[0, \infty)$ is equivalent to $\phi^{'}(h) \geq \frac{\phi(h_1)-\phi(h)}{h_1-h}$ for $h>h_1 \geq 0,$ by taking $h=\frac{1}{p}$ and $h_1=\frac{1}{q}$ the convexity of $\phi$ is equivalent to 
\begin{equation}
   \log \Big( \int_{\mathbb{X}} |u(x)|^{p} dx \Big)- p \frac{\int_{\mathbb{X}} |u(x)|^{p} \log(|u(x)|)dx}{\int_{\mathbb{X}} |u(x)|^{p} dx} \geq \frac{qp}{p-q} \log \Big(\frac{\|u\|_{L^q(\mathbb{X})}}{\|u\|_{L^p(\mathbb{X})}} \Big)
\end{equation}
and therefore, equivalent to
\begin{align}
   \nonumber &\int_{\mathbb{X}} \frac{|u(x)|^p}{\|u\|_{L^p(\mathbb{X})}^p} \log \left( \frac{|u(x)|^p}{\|u\|_{L^p(\mathbb{X})}^p} \right) dx \\&=  p \frac{\int_{\mathbb{X}} |u(x)|^{p} \log(|u(x)|)dx}{\int_{\mathbb{X}} |u(x)|^{p} dx} -\log \Big( \int_{\mathbb{X}} |u(x)|^{p} dx \Big) \leq \frac{q}{q-p} \log \left(\frac{\|u\|^p_{L^q(\mathbb{X})}}{\|u\|_{L^p(\mathbb{X})}^p} \right),
\end{align} establishing the validity of \eqref{eqlog} for the range $0<p<q \leq \infty$ and its equivalence with \eqref{eq4.1} as this process is reversible at every step.
\end{proof}

The following result follows from Theorem \ref{hol+loghol} by noting that the inequality \eqref{eq4.1} holds for all $1\leq p\leq r \leq q \leq \infty$ satisfying $\frac{1}{r}=\frac{a}{q}+\frac{1-a}{q}$ with $a \in [0, 1]$ as a result of the usual H\"older inequality in a measure space. We note here that this result is an  improvement of  a result established by the third author and collaborators \cite{CKR21b} in view of the inclusion of  endpoints.

\begin{theorem}\label{LogHol}
Let $\mathbb{X}$ be a measure space. Let $u \in L^p(\mathbb{X}) \cap L^q(\mathbb{X}) \backslash \{0\}$ with $1\leq  p<q\leq \infty.$ Then we have  
\begin{equation} \label{LogHOleq}
    \int_{\mathbb{X}} \frac{|u(x)|^p}{\|u\|_{L^p(\mathbb{X})}^p} \log \left( \frac{|u(x)|^p}{\|u\|^p_{L^p(\mathbb{X})}} \right) dx\leq \frac{q}{q-p} \log \left(\frac{\|u\|^p_{L^q(\mathbb{X})}}{\|u\|_{L^p(\mathbb{X})}^p} \right).
\end{equation}
\end{theorem}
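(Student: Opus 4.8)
The plan is to derive Theorem~\ref{LogHol} as an immediate consequence of the equivalence (i) $\Leftrightarrow$ (ii) established in Theorem~\ref{hol+loghol}, applied in the direction (i) $\Rightarrow$ (ii). Since statement (ii) with the parameters $p,q$ is precisely the asserted inequality \eqref{LogHOleq}, it suffices to verify that the H\"older-type interpolation inequality \eqref{eq4.1} holds for \emph{all} admissible triples with $1\leq p\leq r\leq q\leq\infty$ and $\frac{1}{r}=\frac{a}{p}+\frac{1-a}{q}$, $a\in[0,1]$, in an arbitrary measure space $\mathbb{X}$. Once this is in hand, Theorem~\ref{hol+loghol} delivers \eqref{eqlog} for every such pair, in particular for the range $1\leq p<q\leq\infty$ of the present statement.

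First I would establish \eqref{eq4.1}. For $q<\infty$ and $a\in(0,1)$ I would decompose $|u|^r=|u|^{ra}\,|u|^{r(1-a)}$ and apply the classical H\"older inequality on $\mathbb{X}$ with the conjugate exponents $\frac{p}{ra}$ and $\frac{q}{r(1-a)}$; these are genuinely conjugate because
\begin{equation*}
\frac{ra}{p}+\frac{r(1-a)}{q}=r\left(\frac{a}{p}+\frac{1-a}{q}\right)=r\cdot\frac{1}{r}=1
\end{equation*}
by the defining relation for $r$. This yields
\begin{equation*}
\int_{\mathbb{X}}|u|^r\,dx\leq\left(\int_{\mathbb{X}}|u|^p\,dx\right)^{\frac{ra}{p}}\left(\int_{\mathbb{X}}|u|^q\,dx\right)^{\frac{r(1-a)}{q}},
\end{equation*}
and taking $r$-th roots gives exactly \eqref{eq4.1}. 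The endpoint cases $a\in\{0,1\}$ (that is, $r=p$ or $r=q$) are trivial, so the only genuine technical point is $q=\infty$.

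The main, albeit mild, obstacle is handling $q=\infty$, where the exponent $\frac{q}{r(1-a)}$ degenerates. Here I would argue directly: from $|u(x)|\leq\|u\|_{L^\infty(\mathbb{X})}$ almost everywhere one obtains $\int_{\mathbb{X}}|u|^r\,dx\leq\|u\|_{L^\infty(\mathbb{X})}^{r-p}\int_{\mathbb{X}}|u|^p\,dx$, which after taking $r$-th roots is \eqref{eq4.1} with $a=\frac{p}{r}$, consistent with $\frac{1}{r}=\frac{a}{p}$ when $\frac{1}{q}=0$. With \eqref{eq4.1} verified across the full parameter range, invoking the equivalence in Theorem~\ref{hol+loghol} completes the proof of \eqref{LogHOleq}. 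I should also note that, since the classical H\"older inequality requires $p\geq 1$, this argument produces the inequality precisely in the stated range $1\leq p<q$; the content of Theorem~\ref{hol+loghol} itself is the purely formal equivalence, which does not assume $p\geq 1$, so the restriction enters only through the availability of \eqref{eq4.1}.
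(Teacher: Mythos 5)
Your proposal is correct and follows essentially the same route as the paper: the paper derives Theorem~\ref{LogHol} precisely by invoking the equivalence of Theorem~\ref{hol+loghol} and noting that \eqref{eq4.1} holds by the usual H\"older inequality in a measure space, and you simply make explicit the details (the decomposition $|u|^r=|u|^{ra}|u|^{r(1-a)}$ and the $q=\infty$ endpoint) that the paper leaves implicit. One inessential quibble: your closing remark that H\"older forces $p\geq 1$ is not quite right, since the exponents you actually use, $\frac{p}{ra}$ and $\frac{q}{r(1-a)}$, are $\geq 1$ even when $0<p<1$, so \eqref{eq4.1} in fact holds for all $0<p\leq r\leq q\leq\infty$; this does not affect the validity of your proof of the stated theorem.
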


Next result is the subelliptic fractional logarithmic Sobolev inequality for inhomogeneous fractional Sobolev spaces on stratified Lie groups.

\begin{theorem} Let $s \in (0, 1),$ $p \in (1, \infty), Q>ps,$ and $p<q<p^*:=\frac{Qp}{Q-ps},$ where $Q$ is the homogeneous dimension of a stratified Lie group $\mathbb{G}.$
 Then for any $u \neq 0$ we have 
\begin{align} \label{logsobo}
    \int_{\G} \frac{|u(x)|^p}{\|u\|_{L^p(\G)}^p} \log \left( \frac{|u(x)|^p}{\|u\|^p_{L^p(\G)}} \right) dx \leq \frac{Q}{s} \log \left( C_{S, \G, p}\frac{\left[\int_{\mathbb{G}}\int_{\mathbb{G}}\frac{|u(x)-u(y)|^{p}}{\left|y^{-1} x\right|^{Q+ps}} dxdy+\int_{\mathbb{G}}|u(x)|^{p})dx\right]}{\|u\|_{L^p(G)}^p} \right),
\end{align} where $C_{S, \G, p}$ is  given by 
$$C_{S, \G, p}:=\left( \frac{s}{Qd}\right)^{\frac{sp}{Q}}.$$
\end{theorem}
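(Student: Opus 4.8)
The plan is to imitate the proof of the homogeneous inequality in Theorem~\ref{th1.8}, with the Gagliardo--Nirenberg inequality replaced by the sharp fractional Sobolev inequality of Theorem~\ref{main thm2}. We may assume $A(u):=[u]_{s,p,\G}^{p}+\|u\|_{L^p(\G)}^{p}<\infty$, i.e. $u\in W^{s,p}(\G)$, since otherwise the right-hand side equals $+\infty$ and there is nothing to prove; then $u\in L^{p}(\G)\cap L^{q}(\G)$ by the embedding \eqref{emG}. The first step is to apply the logarithmic Hölder inequality of Theorem~\ref{LogHol} with the admissible pair of exponents $p<q$, which yields
\begin{equation*}
\int_{\G}\frac{|u|^{p}}{\|u\|_{L^p(\G)}^{p}}\log\Big(\frac{|u|^{p}}{\|u\|_{L^p(\G)}^{p}}\Big)\,dx\le \frac{q}{q-p}\,\log\frac{\|u\|_{L^q(\G)}^{p}}{\|u\|_{L^p(\G)}^{p}}.
\end{equation*}
This reduces the whole statement to an upper bound for the single quantity $\|u\|_{L^q(\G)}^{p}$.

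Next I would insert the sharp Sobolev inequality \eqref{ineq-Sobolev}, written as $\|u\|_{L^q(\G)}^{p}\le C_{S,\G}\,A(u)$, into the argument of the outer logarithm; by monotonicity of $\log$ this produces
\begin{equation*}
\int_{\G}\frac{|u|^{p}}{\|u\|_{L^p(\G)}^{p}}\log\Big(\frac{|u|^{p}}{\|u\|_{L^p(\G)}^{p}}\Big)\,dx\le \frac{q}{q-p}\,\log\Big(C_{S,\G}\,\frac{A(u)}{\|u\|_{L^p(\G)}^{p}}\Big),
\end{equation*}
which already has the exact structural shape of the claim, with the ratio $A(u)/\|u\|_{L^p(\G)}^{p}$ sitting inside the logarithm. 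It then remains only to identify the numerical constants.

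The final step, and the one demanding care, is the constant bookkeeping. Here I would substitute the explicit sharp Sobolev constant from Theorem~\ref{main thm2}, $C_{S,\G}^{-1}=\big(\tfrac{pq}{q-p}\,d\big)^{(q-p)/q}$, and then invoke the identity \eqref{exofd} from Lemma~\ref{rl3.1}, namely $\|\phi\|_{L^p(\G)}^{p}=\tfrac{pqs-Q(q-p)}{(q-p)s}\,d$, in order to re-express the prefactor $\tfrac{q}{q-p}$ together with $\log C_{S,\G}$ purely in terms of $Q,s,p$ and the least energy $d$. Collecting these contributions and matching the powers should fold the constant into the asserted value $C_{S,\G,p}=(s/Qd)^{sp/Q}$ and the leading factor into $\tfrac{Q}{s}$. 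The main obstacle I anticipate is precisely this algebra: one must track the exponents dictated by Lemma~\ref{rl3.1} exactly, so that the $q$-dependent prefactor $\tfrac{q}{q-p}$ combined with $\log C_{S,\G}$ collapses to the clean form $\tfrac{Q}{s}\log\big(C_{S,\G,p}\,\cdot\big)$ recorded in the statement.
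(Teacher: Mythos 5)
Your first two steps coincide exactly with the paper's proof: apply the logarithmic H\"older inequality of Theorem \ref{LogHol} with exponents $p<q$, then insert the sharp Sobolev inequality \eqref{ineq-Sobolev} inside the logarithm, arriving at
$\int_{\G} \frac{|u|^p}{\|u\|_{L^p(\G)}^p}\log\big(\frac{|u|^p}{\|u\|_{L^p(\G)}^p}\big)dx \le \frac{q}{q-p}\log\big(C_{S,\G}\,X\big)$ with $X=\big([u]_{s,p,\G}^p+\|u\|_{L^p(\G)}^p\big)/\|u\|_{L^p(\G)}^p$. The gap is in your final step. You expect that, \emph{at a fixed} $q$, substituting $C_{S,\G}^{-1}=\big(\tfrac{pq}{q-p}d\big)^{(q-p)/q}$ and the identity \eqref{exofd}, then ``matching powers,'' will collapse the prefactor $\tfrac{q}{q-p}$ and the constant into $\tfrac{Q}{s}$ and $(s/(Qd))^{sp/Q}$. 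No algebraic substitution can do this: $\tfrac{q}{q-p}$ is a number strictly larger than $\tfrac{Q}{sp}=\tfrac{p_s^*}{p_s^*-p}$ for every $q<p_s^*$, and $C_{S,\G}$ (in which $d=d(q)$ also depends on $q$, since the Euler--Lagrange problem \eqref{r1.71} involves $q$) is not equal to $(s/(Qd))^{sp/Q}$ except in a limit. Nor does the fixed-$q$ bound \emph{imply} the claimed one: for large $X$ the function $\tfrac{q}{q-p}\log(C_{S,\G}X)$ eventually exceeds $\tfrac{Q}{sp}\log(C_{S,\G,p}X)$ because its slope in $\log X$ is larger, so you cannot pass from one to the other by monotonicity either.

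The missing idea --- and the crux of the paper's proof --- is that the left-hand side (the entropy functional) is independent of $q$, while the right-hand side depends on $q$ only through the constants; hence one may take the infimum of the right-hand side over $q\in(p,p_s^*)$, i.e.\ pass to the limit $q\nearrow p_s^*=\tfrac{Qp}{Q-ps}$. Since $q\mapsto\tfrac{q}{q-p}$ is decreasing, $\tfrac{q}{q-p}\to\tfrac{Q}{sp}$, and
$C_{S,\G}=\big(\tfrac{pq}{q-p}d\big)^{(p-q)/q}\to\big(\tfrac{Q}{s}d\big)^{-sp/Q}=\big(\tfrac{s}{Qd}\big)^{sp/Q}=C_{S,\G,p}$,
which is exactly how the paper obtains the stated constants (formally via a $\limsup$ of $C_{S,\G}$, since $d$ depends on $q$; note also that this limit yields the prefactor $\tfrac{Q}{sp}$, the discrepancy with the $\tfrac{Q}{s}$ recorded in the statement being an inconsistency internal to the paper, not something your bookkeeping could resolve). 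With this limiting step added, your outline becomes the paper's proof; without it, the argument does not close.
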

Here $d=I(\phi):=\inf_{u\in\mathcal{N}}I(u)$, $\phi$ being a least energy solution of \eqref{r1.71}.

\begin{proof}
    The logarithmic H\"older inequality \eqref{LogHol} in the case when  $\mathbb{X}=(\G, dx)$  and the fractional subelliptic Sobolev inequality \eqref{ineq-Sobolev} imply that
\begin{align} \label{eq7.7}
   \nonumber  \int_{\G} \frac{|u(x)|^p}{\|u\|_{L^p(\G)}^p}& \log \left( \frac{|u(x)|^p}{\|u\|^p_{L^p(\G)}} \right) dx \leq \frac{q}{q-p}\log \left(\frac{\|u\|^p_{L^q(\G)}}{\|u\|_{L^p(\G)}^p} \right)\\& \leq \frac{q}{q-p}\log \left(C_{S, \G} \frac{\left[\int_{\mathbb{G}}\int_{\mathbb{G}}\frac{|u(x)-u(y)|^{p}}{\left|y^{-1} x\right|^{Q+ps}} dxdy+\int_{\mathbb{G}}|u(x)|^{p})dx\right]}{\|u\|_{L^p(\G)}^p} \right).
\end{align}
Observe that the only place where the parameter $q$ appears in \eqref{eq7.7} are  constants $\frac{q}{q-p}$ and $C_{S, \G}.$ Therefore, by minimising the constant $\frac{q}{q-p}$ over the range of $q \in (p, p_s^*)$ we get that 
$\underset{{q \in (p, p_s^*)}}{\min} \frac{q}{q-p}= \frac{\frac{Qp}{Q-ps}}{\frac{Qp}{Q-ps}-p}= \frac{Q}{sp}.$ Therefore, from \eqref{eq7.7} we deduce that 
\begin{equation*}
    \int_{\G} \frac{|u(x)|^p}{\|u\|_{L^p(\G)}^p} \leq \frac{Q}{sp}\log \left(C_{S, \G, p} \frac{\left[\int_{\mathbb{G}}\int_{\mathbb{G}}\frac{|u(x)-u(y)|^{p}}{\left|y^{-1} x\right|^{Q+ps}} dxdy+\int_{\mathbb{G}}|u(x)|^{p})dx\right]}{\|u\|_{L^p(\G)}^p} \right) ,
\end{equation*}
where the constant $C_{S, \G, p}$ is given by 
$$C_{S, \G, p}:=\limsup_{q \rightarrow \frac{Qp}{Q-ps}}C_{S, \G}=\limsup_{q \rightarrow \frac{Qp}{Q-ps}}\left(\frac{pq}{q-p} d \right)^{\frac{p-q}{q}}=\left( \frac{s}{Qd}\right)^{\frac{sp}{Q}},$$ completing the proof.
\end{proof}
The following theorem gives the subelliptic fractional logarithmic Sobolev inequality for homogeneous fractional Sobolev spaces on stratified Lie groups.
\begin{theorem} Let $s \in (0, 1),$ $p \in (1, \infty), Q>ps,$ and $p<q<p^*:=\frac{Qp}{Q-ps},$ where $Q$ is the homogeneous dimension of a stratified Lie group $\mathbb{G}.$
 Then for any $u \neq 0$, we have 
\begin{align} \label{logsobodunk}
    \int_{\G} \frac{|u(x)|^p}{\|u\|_{L^p(\G)}^p} \log \left( \frac{|u(x)|^p}{\|u\|^p_{L^p(\G)}} \right) dx \leq \frac{Q}{s} \log \left( C_{GN, \G}^{\frac{ps}{Q(q-p)}}\frac{\left(\int_{\mathbb{G}}\int_{\mathbb{G}}\frac{|u(x)-u(y)|^{p}}{\left|y^{-1} x\right|^{Q+ps}} dxdy\right)^{\frac{1}{p}}}{\|u\|_{L^p(G)}} \right),
\end{align} where $C_{GN, \G}$ is the best constant in the fractional subelliptic Gagliardo-Nirenberg inequality given by 

$$C_{GN, \G}:=\frac{pqs}{pqs-Q(q-p)}\left(\frac{Q(q-p)}{pqs-Q(q-p)}\right)^{\frac{Q(p-q)}{sp^2}}\left(\frac{pqs-Q(q-p)}{(q-p)s} d\right)^{\frac{p-q}{p}}.$$
\end{theorem}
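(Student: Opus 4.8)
The plan is to mirror the proof of the inhomogeneous logarithmic Sobolev inequality (Theorem \ref{loginhom}) but to feed in the Gagliardo-Nirenberg inequality \eqref{r1.6} in place of the Sobolev inequality \eqref{ineq-Sobolev}. First I would invoke the logarithmic H\"older inequality \eqref{LogHOleq} with $\mathbb{X}=(\G, dx)$, which gives for any $u \neq 0$ and any admissible $q \in (p, p_s^*)$ that
\begin{equation*}
    \int_{\G} \frac{|u(x)|^p}{\|u\|_{L^p(\G)}^p} \log \left( \frac{|u(x)|^p}{\|u\|^p_{L^p(\G)}} \right) dx \leq \frac{q}{q-p}\log \left(\frac{\|u\|^p_{L^q(\G)}}{\|u\|_{L^p(\G)}^p} \right).
\end{equation*}

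Next I would bound $\|u\|^p_{L^q(\G)}$ using the Gagliardo-Nirenberg inequality \eqref{r1.6}. Raising \eqref{r1.6} to the power $\frac{1}{q}$ and recalling the definition \eqref{GN-intro} of $C_{GN,\G}$, one obtains
\begin{equation*}
    \|u\|_{L^q(\G)}^p \leq C_{GN,\G}^{\frac{p}{q}}\left(\int_{\mathbb{G}}\int_{\mathbb{G}}\frac{|u(x)-u(y)|^{p}}{\left|y^{-1} x\right|^{Q+ps}} dxdy\right)^{\frac{Q(q-p)}{sp q}}\left(\int_{\mathbb{G}}|u|^{p}dx\right)^{\frac{spq-Q(q-p)}{spq}},
\end{equation*}
so that after dividing by $\|u\|_{L^p(\G)}^p$ the exponent of $\|u\|_{L^p(\G)}^p$ collapses to $\frac{Q(q-p)}{spq} \cdot \frac{p}{1}$ matched against the seminorm. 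Substituting this into the logarithmic H\"older bound and using $\log(ab)=\log a+\log b$ yields an estimate in which the Gagliardo-Nirenberg seminorm and the $L^p$ norm appear; the prefactor of the whole logarithm is $\frac{q}{q-p}$ and the coefficient of the seminorm-to-$L^p$-ratio term works out to $\frac{q}{q-p}\cdot\frac{Q(q-p)}{spq}=\frac{Q}{sp}$, which is independent of $q$.

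The key structural observation, exactly as in Theorem \ref{loginhom}, is that after this substitution the parameter $q$ survives only through the outer factor $\frac{q}{q-p}$ and through $C_{GN,\G}$ itself; the homogeneity degrees of the seminorm and the $L^p$-norm are already $q$-independent. Thus I would optimise by sending $q \to p_s^* = \frac{Qp}{Q-ps}$, using $\min_{q\in(p,p_s^*)}\frac{q}{q-p}=\frac{Q}{sp}$. The step to watch is the behaviour of $C_{GN,\G}$ in this limit: one must verify $\limsup_{q\to p_s^*}C_{GN,\G}^{\frac{ps}{Q(q-p)}}$ converges to the stated explicit expression, which is the genuinely computational point of the argument.

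The main obstacle I anticipate is precisely this limiting computation of the constant, because the exponents $\frac{Q(q-p)}{sp^2}$ and $\frac{pqs-Q(q-p)}{(q-p)s}$ in the formula for $C_{GN,\G}$ from Theorem \ref{rt1.3} all degenerate as $q \to \frac{Qp}{Q-ps}$ (where $pqs-Q(q-p)\to 0$). One must track these competing blow-up and vanishing rates carefully; since $d$ itself depends on $q$ through the equation \eqref{r1.71}, the cleanest route is to express everything through $\int_\G |\phi|^p\,dx$ via \eqref{exofd} and verify that the powers recombine into $\left(\frac{s}{Qd}\right)^{\frac{sp}{Q}}$-type form. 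Everything else — the H\"older step, the Gagliardo-Nirenberg substitution, and the exponent bookkeeping giving the overall $\frac{Q}{s}$ prefactor — is routine and parallels the preceding theorem, so I would present those steps compactly and devote the care to the constant.
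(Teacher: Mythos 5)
Your first two steps --- the logarithmic H\"older inequality \eqref{LogHOleq} followed by the Gagliardo--Nirenberg inequality \eqref{r1.6}, with the bookkeeping showing that the seminorm term carries the $q$-independent coefficient $\frac{q}{q-p}\cdot\frac{Q(q-p)}{spq}=\frac{Q}{sp}$ --- coincide exactly with the paper's proof, and they already \emph{finish} it. Indeed, at that point you have
\begin{equation*}
\int_{\G} \frac{|u(x)|^p}{\|u\|_{L^p(\G)}^p} \log \left( \frac{|u(x)|^p}{\|u\|^p_{L^p(\G)}} \right) dx \leq \frac{p}{q-p}\log C_{GN,\G} + \frac{Q}{sp}\log\left(\frac{\int_{\mathbb{G}}\int_{\mathbb{G}}\frac{|u(x)-u(y)|^{p}}{\left|y^{-1} x\right|^{Q+ps}}\, dxdy}{\|u\|_{L^p(\G)}^p}\right),
\end{equation*}
and the elementary identity $\frac{p}{q-p}\log C_{GN,\G}=\frac{Q}{s}\log\bigl(C_{GN,\G}^{\frac{ps}{Q(q-p)}}\bigr)$ turns this into \eqref{logsobodunk} verbatim. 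The prefactor $\frac{Q}{s}$ and the exponent $\frac{ps}{Q(q-p)}$ arise as exact algebraic identities valid for every fixed $q\in(p,p_s^*)$; nothing is minimised and no limit is taken anywhere.

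The final step you propose --- sending $q\to p_s^*$ and computing $\limsup_{q\to p_s^*}C_{GN,\G}^{ps/(Q(q-p))}$ --- is the genuine gap: it is both unnecessary and wrong for this statement. Unlike Theorem \ref{loginhom}, where the right-hand side of \eqref{ineq-Sobolev} has $q$-free exponents so that minimising $\frac{q}{q-p}$ over $q\in(p,p_s^*)$ is legitimate and produces the $q$-free limiting constant $(s/(Qd))^{sp/Q}$, Theorem \ref{th1.8} is a fixed-$q$ statement: both the constant $C_{GN,\G}$ of Theorem \ref{rt1.3} and the exponent $\frac{ps}{Q(q-p)}$ in \eqref{logsobodunk} depend on $q$, so there is no limiting constant to identify. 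Moreover, the limit you flag as ``the genuinely computational point'' is not even well posed within the paper's framework: $d=d(q)$ is the least energy of problem \eqref{r1.71}, whose existence (Theorem \ref{rt1.2i}) requires $q<p_s^*$ strictly --- at the critical exponent the compactness argument fails --- and since $pqs-Q(q-p)\to 0$ the quantity $C_{GN,\G}^{ps/(Q(q-p))}$ is an indeterminate form with competing rates and a $q$-dependent $d$. So, executed as written, your plan would stall precisely at the step you invented: the part you meant to present ``compactly'' is the entire proof, and the ``main obstacle'' you identified does not exist.
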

Here $d=I(\phi):=\inf_{u\in\mathcal{N}}I(u)$, $\phi$ being a least energy solution of \eqref{r1.71}.
\begin{proof}
    Using the fractional subelliptic  Gagliardo-Nirenberg inequality \eqref{r1.6} and the logarithmic H\"older inequality \eqref{LogHOleq}, we obtain that 
    \begin{align*}
        \int_{\G} \frac{|u(x)|^p}{\|u\|_{L^p(\G)}^p}& \log \left( \frac{|u(x)|^p}{\|u\|^p_{L^p(\G)}} \right) dx \leq \frac{q}{q-p}\log \left(\frac{\|u\|^p_{L^q(\G)}}{\|u\|_{L^p(\G)}^p} \right) \\& \leq \frac{q}{q-p} \log \left( C_{GN, \G}^{\frac{p}{q}}\frac{\left(\int_{\mathbb{G}}\int_{\mathbb{G}}\frac{|u(x)-u(y)|^{p}}{\left|y^{-1} x\right|^{Q+ps}} dxdy\right)^{\frac{Q(q-p)}{spq}}\left(\int_{ \mathbb{G}}|u|^{p} dx\right)^{\frac{spq-Q(q-p)}{spq}}}{\|u\|^p_{L^p(G)}} \right) \\& =\frac{q}{q-p} \log \left( C_{GN, \G}^{\frac{p}{q}}\frac{\left(\int_{\mathbb{G}}\int_{\mathbb{G}}\frac{|u(x)-u(y)|^{p}}{\left|y^{-1} x\right|^{Q+ps}} dxdy\right)^{\frac{Q(q-p)}{spq}}}{\|u\|^{\frac{Q(q-p)}{sq}}_{L^p(G)}} \right)\\&=
        \frac{Q}{s} \log \left( C_{GN, \G}^{\frac{ps}{Q(q-p)}}\frac{\left(\int_{\mathbb{G}}\int_{\mathbb{G}}\frac{|u(x)-u(y)|^{p}}{\left|y^{-1} x\right|^{Q+ps}} dxdy\right)^{\frac{1}{p}}}{\|u\|_{L^p(G)}} \right),
    \end{align*}
    completing the proof of \eqref{logsobodunk}.
\end{proof}

\section{Conflict of interest statement}
On behalf of all authors, the corresponding author states that there is no conflict of interest.

\section{Data availability statement}
Data sharing is not applicable to this article as no datasets were generated or analysed during the current study.

\section{Acknowledgement}
   VK and MR are supported by the FWO Odysseus 1 grant G.0H94.18N: Analysis and Partial
Differential Equations, the Methusalem programme of the Ghent University Special Research Fund (BOF) (Grant number 01M01021) and by FWO Senior Research Grant G011522N. MR is also supported by EPSRC grant
EP/R003025/2.


\begin{thebibliography}{10}




\bibitem{AM18}
A.~Adimurthi and A.~Mallick.
\newblock A {H}ardy type inequality on fractional order {S}obolev spaces on the {H}eisenberg group.
\newblock {\em Ann. Sc. Norm. Super. Pisa Cl. Sci. (5)}, 18(3):917--949, 2018.







\bibitem{A76}
Th. Aubin. Probl\`{e}mes isoperim\'{e}triques et espaces de {S}obolev. {\em J. Diff. Geom.}, 11, 573--598, 1976.














\bibitem{BFG} H. Bahouri, C. Fermanian-Kammerer and I. Gallagher. Dispersive estimates for the {S}chr\"odinger operator on step-2 stratified {L}ie groups. {\it Anal. PDE}, 9(3):545--574, 2016.

\bibitem{B2} W. Beckner. Inequalities in {F}ourier analysis. {\it Ann. Math.}, 102(1):159--182, 1975.
\bibitem{B1} W. Beckner. Geometric asymptotics and the logarithmmic {S}obolev inequality. {\it Forum Math.}, 11:105--137, 1999.

\bibitem{B3} W. Beckner. Pitt’s inequality and the fractional {L}aplacian: {S}harp error estimates. {\it Forum Math.}, 24:177--209, 2012.

\bibitem{BLU07}
A.~Bonfiglioli, E.~Lanconelli and F.~Uguzzoni.
\newblock {\em Stratified {L}ie {G}roups and {P}otential {T}heory for their
  {S}ub-{L}aplacians}.
\newblock Springer Berlin Heidelberg, 2007.

\bibitem{DHHL} D. G. Bhimani, H. Hajaiej, S. Haque and T. Luo. A sharp {G}agliardo-{N}irenberg inequality and its application to fractional problems with inhomogeneous nonlinearity. {\it Evol. Equ. Control Theory} 12(1):362--390, 2023.

\bibitem{BG05} M. Bonforte and G. Grillo. Direct and reverse {G}agliardo-{N}irenberg inequalities from logarithmic {S}obolev inequalities. \newblock {\it Bull. Pol. Acad. Sci. Math.} 53:323--336, 2005.

\bibitem{BF13} T. H. Branson, L. Fontana and C. Morpurgo. Moser-{T}rudinger and {B}eckner-{O}nofri's inequalities on the {CR} sphere. \newblock {\it  Ann. of Math. (2)} 177(1):1--52, 2013.

\bibitem{B11}
H. Brezis.
\newblock Functional {A}nalysis, {S}obolev {S}paces and {P}artial {D}ifferential {E}quations, 662 pp.
\newblock Springer, New York, NY, XIV, 600, 2011.

\bibitem{BB98}
H. Brezis and  F. Browder. Partial differential equations in the 20th century. \newblock {\it Adv. Math.}, 135(1):76--144, 1998.
 
\bibitem{BL83}
H.~Brezis and E.~Lieb.
\newblock A relation between pointwise convergence of functions and convergence of functionals.
\newblock {\em Proceedings of the American Mathematical Society}, 88(3):486--490, 1983.








	\bibitem{CKR21b} M. Chatzakou, A. Kassymov and M. Ruzhansky. Logarithmic {S}obolev inequalities on {L}ie groups. \newblock {\it ArXiv preprint arXiv:2106.15652},  2021.
		

\bibitem{CDG93}
L.~Capogna, D.~Danielli and N.~Garofalo.
\newblock An embedding theorem and the {H}arnack inequality for nonlinear
  subelliptic equations.
\newblock {\em Communications in Partial Differential Equations},
  18(9-10):1765--1794, 1993.


\bibitem{Cart28} E. Cartan. Sur la repr\'esentation g\'eom\'etrique des syst\'emes mat\'eriels non holonomes. \newblock {\it  Proc. Internat. Congress
Math.}, Vol.4, Bologna, 253--261, 1928.




\bibitem{CCR15} P. Ciatti, M. G. Cowling and F. Ricci. Hardy and uncertainty inequalities on stratified {L}ie groups. \newblock {\it Adv. Math.} 277:365--387, 2015.



\bibitem{CLZ23} L. Chen, G. Lu and M. Zhu. Least energy solutions to quasilinear subelliptic equations with constant and degenerate potentials on the {H}eisenberg group. {\it Proc. Lond. Math. Soc. (3)} 126(2):518--555, 2023. 

\bibitem{CR13II} J. Chen and E. M. Rocha. A class of sub-elliptic equations on the {H}eisenberg group and related interpolation inequalities. In:{\it Advances in Harmonic Analysis and Operator Theory, Volume 229 of Operator Theory: Advances and Applications}, pp. 123–137. Birkhäuser/Springer Basel AG, Basel, 2013.



 \bibitem{C98} D. Christodoulou. \newblock On the geometry and dynamics of crystalline continua. {\em Ann. Inst. H. Poincaré}. 69:335--358, 1998.


 \bibitem{CGS04} G. Citti, G. Manfredini and A. Sarti. \newblock Neuronal oscillations in the visual cortex: $\Gamma$-convergence to the {R}iemannian {M}umford–{S}hah functional. {\em SIAM J. Math. Anal.} 35:1394--1419, 2004.

\bibitem{DLL22} A. N. Dao, N. Lam and G. Lu. Gagliardo-{N}irenberg type inequalities on {L}orentz, {M}arcinkiewicz and weak-$L^\infty$ spaces. \newblock  {\it Proc. Amer. Math. Soc.} 150(7):2889--2900, 2022.

 \bibitem{DGN07} D. Danielli, N. Garofalo and D. Nhieu. \newblock Sub-{R}iemannian calculus on hypersurfaces in {C}arnot groups. {\em Adv. Math.} 215:292--378, 2007. 

\bibitem{delD02}
M. del Pino and J. Dolbeault. Best constants for {G}agliardo-{N}irenberg inequalities and applications to nonlinear diffusions, {\it J. Math. Pures Appl.}, 81  847--875, 2002.




\bibitem{DELL14} J. Dolbeault, M. J. Esteban, A. Laptev and M. Loss. One-dimensional {G}agliardo-{N}irenberg-{S}obolev inequalities: remarks on duality and flows. {\it  J. Lond. Math. Soc.}
90, 525--550, 2014.

\bibitem{DT16} J. Dolbeault and G. Toscani. Stability results for logarithmic {S}obolev and {G}agliardo-{N}irenberg inequalities. {\it Int. Math. Res. Not. 2016},  473--498, 2016.







\bibitem{Esf15} A. Esfahani. Anisotropic {G}agliardo-{N}irenberg inequality with fractional derivatives. {\it Z. Angew. Math. Phys}. 66:6, 3345--3356, 2015.

\bibitem{FF15}
F. Ferrari and B. Franchi. \newblock Harnack inequality for fractional sublaplacians in {C}arnot groups \newblock {\em Mathematische Zeitschrift}, 279(1):435--458, 2015.





\bibitem{FBR17}
M. Ferrara, G. M. Bisci and D. Repov$\check{\text{s}}$. \newblock Nonlinear elliptic equations on {C}arnot groups, \newblock {\em RACSAM}, 111(3):707--718, 2017.

 \bibitem{FW19} M. Fila and M. Winkler. A {G}agliardo-{N}irenberg-type inequality and its applications to decay estimates for solutions of a degenerate parabolic equation. \newblock {\it  Adv. Math.} 357:106823, 40 pp, 2019.

\bibitem{FR16}
V.~Fischer and M.~Ruzhansky.
\newblock {\em Quantization on {N}ilpotent {L}ie {G}roups, {P}rogress in {M}athematics,  vol. 314}.
\newblock Springer International Publishing, Birkh\"{a}user, 2016.

\bibitem{F75}
G.~B. Folland.
\newblock Subelliptic estimates and function spaces on nilpotent {L}ie groups.
\newblock {\em Arkiv f\"{o}r Matematik}, 13(1-2):161--207, 1975.

 \bibitem{F77} G. B. Folland. \newblock On the {R}othschild–{S}tein lifting theorem. {\em Comm. Partial Differential Equations} 2(2):165--191, 1977.  

\bibitem{FS82}
G. B. Folland and E. M. Stein.
\newblock {Hardy {S}paces on {H}omogeneous {G}roups, {M}athematical {N}otes}.
\newblock vol. 28, Princeton University Press, Princeton, N.J., 1982.



\bibitem{FMMT15} R. L. Frank, M. del Mar Gonz\`alez, D. D. Monticelli and J. Tan. \newblock  An extension problem for the {CR} fractional {L}aplacian. {\em Adv. Math.} 270:97--137, 2015.

\bibitem{Gro}
 M. Gromov. \newblock  Carnot-{C}arath\'eodory {S}paces {S}een from {W}ithin. {S}ub-{R}iemannian {G}eometry, \newblock Progr. Math., vol. 144, Birkh\"auser, Basel, pp. 79--323, 1996.


\bibitem{Gag59} E. Gagliardo. \newblock Ulteriori proprietà di alcune classi di funzioni in più variabili.
\newblock {\it Ricerche Mat.} 8, 24--51. 1959. 






\bibitem{GL92}
N. Garofalo and E. Lanconelli.
\newblock Existence and nonexistence results for semilinear equations on the {H}eisenberg group,
\newblock {\em Indiana Univ. Math. J.}, 41(1):71--98, 1992.



\bibitem{GLV} N. Garofalo, A. Loiudice and D. Vassilev. Optimal decay for solutions of nonlocal semilinear equations with critical exponent in homogeneous group. \newblock {Arxiv preprint: 2210.16893v1 }, 2023. 


\bibitem{Gas20} L. Gassot.  Radially symmetric traveling waves for the {S}chr\"odinger equation on the {H}eisenberg group. {\it Pure Appl. Anal.} 2(4):739--794, 2020.

\bibitem{Gas21} L. Gassot. On the orbital stability of a family of travelling waves for the cubic {S}chr\"odinger equation on the {H}eisenberg group. {\it  Bull. Soc. Math. France}, 149(1):15--54, 2021.

\bibitem{GKR23}
S.~Ghosh, V.~Kumar and M.~Ruzhansky.
\newblock Compact embeddings, eigenvalue problems, and subelliptic {B}rezis-{N}irenberg equations involving singularity on stratified {L}ie groups. {\it Mathematische Annalen}, 49 pp., 2023. DOI: \href{https://doi.org/10.1007/s00208-023-02609-7}{10.1007/s00208-023-02609-7}.

\bibitem{HMOW11} H. Hajaiej and L. Molinet, T. Ozawa and B. Wang. Necessary and sufficient conditions for the fractional {G}agliardo-{N}irenberg inequalities and applications to {N}avier-{S}tokes and generalized boson equations. { Harmonic analysis and nonlinear partial differential equations}. 159-–175, RIMS Kôkyûroku Bessatsu, B26, Res. Inst. Math. Sci. (RIMS), Kyoto, 2011.


\bibitem{HYZ12} H. Hajaiej, X. Yub and Z. Zhai. Fractional {G}agliardo–{N}irenberg and {H}ardy inequalities under {L}orentz norms. {\it J. Math. Anal. Appl.} 396:569--577, 2012.



\bibitem{IV11} S. P. Ivanov and D. N. Vassilev. \newblock  Extremals for the {S}obolev inequality and the quaternionic contact {Y}amabe problem. {\it World Scientific Publishing Co. Pte. Ltd.}, Hackensack, NJ, 2011. xviii+219 pp.




\bibitem{K20} A. Krist\'aly. \newblock Nodal solutions for the fractional {Y}amabe problem on {H}eisenberg groups. {\em Proceedings of the Royal Society of Edinburgh Section A: Mathematics}, 150(2):771--788, 2020.  






\bibitem{KRS20}
A. Kassymov, M. Ruzhansky and D. Suragan.
\newblock Fractional logarithmic inequalities and blow-up results with logarithmic nonlinearity on homogeneous groups,
\newblock {\em NoDEA Nonlinear Differential Equations Appl.}, 27(1), Paper no. 7, 2020.

\bibitem{KRS23} A. Kassymov, M. Ruzhansky and D. Suragan. Fractional {G}agliardo-{N}irenberg, weighted {C}affarelli-{K}ohn-{N}irenberg and {L}yapunov-type inequalities, and applications to {R}iesz potentials and $p$-sub-{L}aplacian systems. {\it Potential Anal}, (2022). DOI: \href{https://doi.org/10.1007/s11118-022-10029-6}{10.1007/s11118-022-10029-6}.

\bibitem{KD20}
A.~Kassymov and D.~Suragan.
\newblock Lyapunov-type inequalities for the fractional $p$-sub-{L}aplacian.
\newblock {\em Advances in Operator Theory}, 5(2):435--452, 2020.



\bibitem{Li85I} P.-L. Lions. The concentration-compactness principle in the calculus of variations. The locally compact case. {II}. {\it Ann. Inst. H. Poincar\'e Anal. Non Lin\'eaire 1}, 4:223--283, 1984. 

\bibitem{Li85II} P.-L. Lions. The concentration-compactness principle in the calculus of variations. The locally compact case. {I}. {\it Ann. Inst. H. Poincar\'e Anal. Non Lin\'eaire 1}, 2:109--145, 1984. 

\bibitem{MPPP23} M. Manfredini, G. Palatucci, M. Piccinini and S. Polidoro. H\"older continuity and boundedness estimates for nonlinear fractional equations in the {H}eisenberg group. \newblock {\it J. Geom. Anal.}, 33:3,  Paper No. 77, 41 pp, 2023.




  
  


  





\bibitem{L15}
A. Loiudice. \newblock Critical growth problems with singular nonlinearities on {C}arnot groups, \newblock {\em Nonlinear Analysis}, 126:415--436, 2015.

\bibitem{L19}
A. Loiudice. \newblock Optimal decay of $p$-{S}obolev extremals on {C}arnot groups, \newblock {\em Journal of Mathematical Analysis and Applications}, 470(1):619--631, 2019.


\bibitem{MS02} V. Maz'ya and T. Shaposhnikova. On the Brezis and Mironescu conjecture concerning a {G}agliardo-{N}irenberg inequality for fractional {S}obolev norms. \newblock {\it  J. Math. Pures Appl. (9)} 81(9):877--884, 2002.


\bibitem{Mer08} J. Merker. Generalizations of logarithmic {S}obolev inequalities. \newblock {\it Discrete Contin. Dyn. Syst. Ser. S}, 1(2):329--338, 2008.

\bibitem{MP18} C. Morosi and L. Pizzocchero. On the constants for some fractional {G}agliardo-{N}irenberg and {S}obolev inequalities.  {\it Expo. Math.} 36(1):32--77, 2018.













\bibitem{NPV12}
E.~D. Nezza, G.~Palatucci and E.~Valdinoci.
\newblock Hitchhiker's guide to the fractional {S}obolev spaces.
\newblock {\em Bulletin des Sciences Math{\'{e}}matiques}, 136(5):521--573,   2012.

\bibitem{Nir59} L. Nirenberg. On elliptic partial differential equations, {\it Ann. Scuola Norm. Sup. Pisa}, 3(13):115--162, 1959.

\bibitem{Nagy41} B. v. Sz. Nagy. \"Uber {I}ntegralungleichungen zwischen einer {F}unktion und ihrer {A}bleitung ({G}erman). {\it Acta Univ. Szeged. Sect. Sci. Math.}, 10:64--74, 1941.

\bibitem{PP22} G. Palatucci and M. Piccinini. Nonlocal {H}arnack inequalities in the {H}eisenberg group. {\it Calc. Var. Partial Differential Equations} 61(5):185, 30 pp, 2022.



\bibitem{PT21} P. Pucci and L. Temperini. Existence for singular critical exponential ($p,Q$) equations in the {H}eisenberg group, {\em Adv. Calc. Var.}, 20 pp., 2021. DOI: 10.1515/acv-2020-0028.



\bibitem{RS76} L. P. Rothschild and E. M.  Stein.  Hypoelliptic differential operators and nilpotent groups. {\em Acta Math.} 137(3-4):247--320, 1976.

 \bibitem{Roth83} L. P. Rothschild. \newblock A remark on hypoellipticity of homogeneous invariant differential operators on nilpotent {L}ie groups. {\em Comm. Partial Differential Equations} 8(15):1679--1682,
1983. 








\bibitem{RS19}
M. Ruzhansky and D. Suragan.
\newblock Hardy inequalities on homogeneous groups: 100 years of {H}ardy inequalities, 
\newblock Progress in Mathematics, Vol. 327, Birkha\"{u}ser, 2019. xvi+588pp.



\bibitem{RTY20}
M. Ruzhansky, N. Tokmagambetov and N. Yessirkegenov.
\newblock Best constants in {S}obolev and {G}agliardo-{N}irenberg inequalities on graded groups and ground states for higher order nonlinear subelliptic equations,
\newblock {\em Calc. Var. Partial Differential Equations}, 59(5):1--23, paper no. 175, 2020.



\bibitem{Stein71} E. M. Stein. Some problems in harmonic analysis suggested by symmetric spaces and semi-simple groups, Actes
du Congr\'es International des Math\'ematiciens (Nice, 1970), Tome 1, pp. 173--189. Gauthier-Villars, Paris, 1971.

\bibitem{RT16} 
L. Roncal and S. Thangavelu. 
\newblock Hardy’s inequality for fractional powers of the sub-{L}aplacian on the {H}eisenberg group.
\newblock {\em Adv. Math.} 302:106--158, 2016.

\bibitem{RT20} L. Roncal and S. Thangavelu.  An extension problem and trace {H}ardy inequality for the sub-{L}aplacian on {H}-type groups. \newblock {\it Int. Math. Res. Not. IMRN},  14:4238--4294, 2020.



\bibitem{T76}
G. Talenti. Best constant in {S}obolev inequality. {\em Annali di Matematica pura ed Applicata.}  110(1)353--372, 1976.

\bibitem{wang23} S. Wang. Lieb's and {L}ions' type theorems on {H}eisenberg group and applications. {\it Math. Methods Appl. Sci.} 46(6):6743--6755, 2023.


\bibitem{Wei82} M. I. Weinstein. Nonlinear {S}chr\"odinger equations and sharp interpolation estimates, {\it Comm. Math. Phys.} 87(4):567--576, 1982/1983.

\bibitem{zhang} Y. Zhang. Optimizers of the {S}obolev and {G}agliardo-{N}irenberg inequalities in $\dot{W}^{s,p}$. {\it Calc. Var. Partial Differential Equations}, 60(1): Paper No. 10, 24 pp, 2021.

\end{thebibliography}
\end{document}